\newtheorem{theorem}[subsection]{Theorem}
\newtheorem{lemma}[subsection]{Lemma}
\newtheorem{corollary}[subsection]{Corollary}
\newtheorem{clm}[subsection]{Claim}
\newtheorem{definition}[subsection]{{\sc Definition}}
\newtheorem{example}[subsection]{{\sc Example}}
\newtheorem{remark}[subsection]{{\sc Remark}}
\numberwithin{equation}{section}
\newcommand\testshape{family=\f@family; series=\f@series; shape=\f@shape.}
\def\myemphInternal#1{\if n\f@shape%
\begingroup\itshape #1\endgroup\/%
\else\begingroup\bfseries #1\endgroup%
\fi}
\def\myemph{\futurelet\testchar\MaybeOptArgmyemph}
\def\MaybeOptArgmyemph{\ifx[\testchar \let\next\OptArgmyemph
                 \else \let\next\NoOptArgmyemph \fi \next}
\def\OptArgmyemph[#1]#2{\index{#1}\myemphInternal{#2}}
\def\NoOptArgmyemph#1{\myemphInternal{#1}}
\newcommand\RRR{\mathbb{R}}
\newcommand\CCC{\mathbb{C}}
\newcommand\ZZZ{\mathbb{Z}}
\newcommand\NNN{\mathbb{N}}
\newcommand\FFF{\mathbb{F}}
\newcommand\LL{\mathcal{L}}
\newcommand\FF{\mathcal{F}}
\newcommand\XX{\mathcal{X}}
\newcommand\YY{\mathcal{Y}}
\newcommand\Int{\mathrm{Int}}
\newcommand\Fr{\mathrm{Fr}}
\newcommand\im{\mathrm{im\,}}
\newcommand\id{\mathrm{id}}
\newcommand\defeq{\stackrel{\mathrm{def}}{=\!=}}
\newcommand\Nz{\NNN_{0}}
\newcommand\Nzi{\overline{\NNN}_{0}}
\newcommand\AFld{F}
\newcommand\AFlow{\mathbf{\AFld}}
\newcommand\BFld{G}
\newcommand\BFlow{\mathbf{\BFld}}
\newcommand\AFlowW{\AFlow_{\Wman}}
\newcommand\BeFld{\bar\BFld}
\newcommand\BeFlow{\bar\BFlow}
\newcommand\Mman{M}
\newcommand\afunc{\alpha}
\newcommand\bfunc{\beta}
\newcommand\cfunc{\gamma}
\newcommand\amap{f}
\newcommand\bmap{g}
\newcommand\cmap{h}
\newcommand\amapB{\mathbf{f}}
\newcommand\bmapB{\mathbf{g}}
\newcommand\Dm{D}
\newcommand\Nman{N}
\newcommand\Vman{V}
\newcommand\Uman{U}
\newcommand\Wman{W}
\newcommand\domA{\mathsf{dom}(\AFlow)}
\newcommand\domAW{\mathsf{dom}(\AFlowW)}
\newcommand\domB{\mathsf{dom}(\BFlow)}
\newcommand\domBe{\mathsf{dom}(\BeFlow)}
\newcommand\funcA{\mathsf{func}(\AFlow)}
\newcommand\funcAV{\mathsf{func}(\AFlow,\Vman)}
\newcommand\funcAVi{\mathsf{func}(\AFlow,\Vman_i)}
\newcommand\funcAWV{\mathsf{func}(\AFlowW,\Vman)}
\newcommand\funcBV{\mathsf{func}(\BFlow,\Vman)}
\newcommand\funcBeV{\mathsf{func}(\BeFlow,\Vman)}
\newcommand\SectShA{\sigma}
\newcommand\SectShAV{\SectShA_{\Vman}}
\newcommand\ShA{\varphi}
\newcommand\ShAV{\ShA_{\Vman}}
\newcommand\ShAWV{\ShA_{\Wman,\Vman}}
\newcommand\ShAVi{\ShA_{\Vman_i}}
\newcommand\ShB{\psi}
\newcommand\ShBe{\bar\ShB}
\newcommand\ShBV{\ShB_{\Vman}}
\newcommand\ShBeV{\ShBe_{\Vman}}
\newcommand\ZidAV{\ker(\ShAV)}
\newcommand\ZidBeV{\ker(\ShBeV)}
\newcommand\Mbh{\mathcal{M}}
\newcommand\Nbh{\mathcal{N}}
\newcommand\Unbh{\mathcal{U}}
\newcommand\AMbh{\mathcal{M}}
\newcommand\ANbh{\mathcal{N}}
\newcommand\eps{\varepsilon}
\newcommand\EAFlow{\mathcal{E}(\AFlow)}
\newcommand\DAFlow{\mathcal{D}(\AFlow)}
\newcommand\EidAFlow[1]{\mathcal{E}_{\id}(\AFlow)^{#1}}
\newcommand\DidAFlow[1]{\mathcal{D}_{\id}(\AFlow)^{#1}}
\newcommand\DiffM{\mathcal{D}(\Mman)}
\newcommand\EAFlowpr{\mathcal{E}(\AFlow')}
\newcommand\DAFlowpr{\mathcal{D}(\AFlow')}
\newcommand\Stabf{{\mathcal{S}}(f)}
\newcommand\StabIdf{{\mathcal{S}}_{\id}(f)}
\newcommand\EidAV[1]{\mathcal{E}_{\id}(\AFlow,\Vman)^{#1}}
\newcommand\DidAV[1]{\mathcal{D}_{\id}(\AFlow,\Vman)^{#1}}
\newcommand\EAV{\mathcal{E}(\AFlow,\Vman)}
\newcommand\DAV{\mathcal{D}(\AFlow,\Vman)}
\newcommand\zer{\mathbf{0}}
\newcommand\Wtop{\mathsf{W}}
\newcommand\Stop{\mathsf{S}}
\newcommand\Wr[1]{\Wtop^{#1}}
\newcommand\Sr[1]{\Stop^{#1}}
\newcommand\Cont[1]{\mathcal{C}^{#1}}
\newcommand\Cr[3]{\Cont{#1}(#2,#3)}
\newcommand\Ci[2]{\Cr{\infty}{#1}{#2}}
\newcommand\contWW[2]{\Wtop^{#1,#2}}
\newcommand\contSS[2]{\Stop^{#1,#2}}
\newcommand\orb{o}
\newcommand\FixF{\Sigma}
\newcommand\FixA{\Sigma_{\AFld}}
\newcommand\FixB{\Sigma_{\BFld}}
\newcommand\GVp{\Gamma^{+}_{\Vman}}
\newcommand\imGVp{\ShAV(\GVp)}
\newcommand\Ka{K_{\afunc}}
\newcommand\fVW{\mathcal{U}_{\Vman}}
\newcommand\prQ{p.b.i.}
\newcommand\typeZero{\mathrm{(Z)}}
\newcommand\typeLinear{\mathrm{(L)}}
\newcommand\typeHamilt{\mathrm{(H)}}
\newcommand\typeHamiltExtr{\mathrm{(HE)}}
\newcommand\typeHamiltNonExtr{\mathrm{(HS)}}
\newcommand\trans{B}
\newcommand\imShA{Sh(\AFlow)}
\newcommand\imShAV{Sh(\AFlow,\Vman)}
\newcommand\imShAVi{Sh(\AFlow,\Vman_{i})}
\newcommand\imShAWV{Sh(\AFlow_{\Wman},\Vman)}
\newcommand\imShBeV{Sh(\BeFlow,\Vman)}
\newcommand\imShBV{Sh(\BFlow,\Vman)}
\newcommand\fixtp[1]{\Sigma^{#1}}
\newcommand\fixtpe[1]{\Sigma^{#1'}}
\newcommand\FixLinE{\fixtpe{\typeLinear}}
\newcommand\FixLinPHE{\fixtpe{\typeLinPH}}
\newcommand\FixLinNilpE{\fixtpe{\typeLinNilp}}
\newcommand\FixHamNonExtrE{\fixtpe{\typeHamiltNonExtr}}
\title{Local inverses of shift maps along orbits of flows}
\address{Topology dept., Institute of Mathematics of NAS of Ukraine, \linebreak
Te\-re\-shchenkivs'ka st. 3, Kyiv, 01601 Ukraine 
}
\author{Sergiy Maksymenko}
\email{maks@imath.kiev.ua}
\begin{document}

\begin{abstract}
Let $\AFlow$ be a smooth flow on a smooth manifold $\Mman$ and $\DAFlow$ be the group of diffeomorphisms of $\Mman$ preserving orbits of $\AFlow$.
We study the homotopy type of the identity components $\DidAFlow{r}$ of $\DAFlow$ with respect to distinct Whithey topologies $\Wr{r}$, $(0\leq r\leq \infty)$.
The main result presents a class of flows $\AFlow$ for which $\DidAFlow{r}$ coincide for all $r$ and are either contractible or homotopy equivalent to the circle.
The group $\DidAFlow{0}$ was studied in the author's paper [``Smooth shifts along trajectories of flows'', Topol. Appl. \textbf{130} (2003) 183-204].
Unfortunately that article contains a gap in estimations of continuity of local inverses of the so-called shift map.
The present paper also repairs these estimations and shows that they hold under additional assumptions on the behavior of regular points of $\AFlow$.
\end{abstract}
\keywords{diffeomorphism, flow, homotopy type, shift map, linearizable vector fields, isolating block}

\subjclass[2000]{37C05, 
57S05, 
57R45  
}

\maketitle

\section{Introduction}
Let $\Mman$ be a smooth $(\Cont{\infty})$, connected, $m$-dimensional manifold possibly non-compact and with or without boundary.
Let also $\AFld$ be a smooth vector field on $\Mman$ tangent to $\partial\Mman$ and generating a global flow
$\AFlow:\Mman\times\RRR \to \Mman$.
Denote by $\FixA$ (or simply by $\FixF$) the set of singular points of $\AFld$.

Let $\EAFlow$ be the subset of $\Ci{\Mman}{\Mman}$ consisting of maps $\amap$ such that 
\begin{enumerate}
\item
$\amap(\orb) \subset\orb$ for every orbit $\orb$ of $\AFld$;
 \item 
$\amap$ is a local diffeomorphism at every singular point $z\in\FixA$.
\end{enumerate}

Let also $\DiffM$ be the group of $C^{\infty}$-diffeomorphisms of $\Mman$ and
$$\DAFlow \defeq \EAFlow\cap\DiffM$$
be the group of \myemph{diffeomorphisms preserving orbits of $\AFld$}.

For every $r=0,1,\ldots,\infty$ denote by $\EidAFlow{r}$ (resp. $\DidAFlow{r}$) the path-component of the identity map $\id_{\Mman}$ in $\EAFlow$ (resp. $\DAFlow$) with respect to the weak $\Wr{r}$ Whithey topology, see Definition~\ref{defn:r-homotopy}.
In particular $\EidAFlow{0}$ (resp. $\DidAFlow{0}$) consists of maps $\amap$ which are homotopic to $\id_{\Mman}$ in $\EAFlow$ (resp. $\DAFlow$).

Define the following map $\ShA:\Ci{\Mman}{\RRR}\to\Ci{\Mman}{\Mman}$ by $\ShA(\afunc)(x)=\AFlow(x,\afunc(x))$ for $\afunc\in\Ci{\Mman}{\RRR}$ and $x\in\Mman$.
We will call $\ShA$ the \myemph{shift map} along orbits of $\AFld$ and denote by $\imShA$ its image $\ShA(\Ci{\Vman}{\RRR})$ in $\Ci{\Mman}{\Mman}$.
Then the following inclusions hold true:
$$
\imShA\;\subset\;\EidAFlow{\infty}\;\subset\;\cdots\;\EidAFlow{1}\;\subset\;\EidAFlow{0}.
$$
The idea of replacing the time in a flow map with a function was extensively used e.g.  in~\cite{EHoph:1937, Chacon:JMM:1966, Totoki:MFCKUS:1966, Kowada:JMSJ:1972, Parry:JLMS:1972, Kochergin:IANSSSR:1973} for reparametrizations of measure preserving flows and investigations of their mixing properties, see also~\cite[\S1]{Maks:CEJM:2009}.
Smooth shifts functions were applied in~\cite{Maks:TA:2003, Maks:AGAG:2006, Maks:BSM:2006,Maks:hamv2, Maks:MFAT:2009} for study of homotopical properties of certain infinite dimensional groups of diffeomorphisms and their actions on spaces of smooth maps.
Also in~\cite{Maks:CEJM:2009, Maks:ImSh} some applications to parameter rigidity of vector fields are given.

Suppose $\AFld$ is inearizable at each $z\in\FixA$, see e.g.~\cite{Siegel:1952,Sternberg:AmJM:1957,Venti:JDE:1966,Bryuno:TMMO:1971,KondratevSamovol:MZ:1973}.
In~\cite{Maks:TA:2003} the author in particular \emph{claimed\/} that if in addition $\Mman$ is compact, then $\DidAFlow{0}$ and $\EidAFlow{0}$ are either contractible or homotopy equivalent to the circle $S^1$ when endowed with $\Wr{\infty}$ topologies.
Unfortunately, it turned out that in such a generality this statement fails and it is necessary to put additional restrictions on the behavior of regular orbits of $\AFld$.
In fact it was shown that $\imShA=\EidAFlow{0}$ and the mistakes appeared in estimations of continuity of local inverses of $\ShA$, see~\cite[Defn.~15, Th.~17, Lm.~32]{Maks:TA:2003} and also Remarks~\ref{rem:error_cont_div} and~\ref{rem:error-th17} for detailed discussion.

Furthermore, the above description of $\DidAFlow{0}$ was essentially used in another author's paper~\cite{Maks:AGAG:2006} concerning calculations of the homotopy types of stabilizers and orbits of Morse functions on surfaces.

The aim of the present paper is to repair incorrect formulations and proofs of~\cite{Maks:TA:2003} and using~\cite{Maks:hamv2,Maks:CEJM:2009,Maks:MFAT:2009} extend the classes of vector fields for which the homotopy types of $\EidAFlow{r}$ and $\DidAFlow{r}$ can be described (Theorem~\ref{th:main-result}).
In particular it will be shown that the results of~\cite{Maks:TA:2003} used in~\cite{Maks:AGAG:2006} remain true (Theorem~\ref{th:Stabf}).

\subsection{Structure of the paper.}
For the convenience of the reader and due to the length of the paper we will now briefly describe its contents.

In \S\ref{sect:shift_map} we recall the notion of shift map and review (correct) results obtained in~\cite{Maks:TA:2003}.
Also notice that if a flow $\AFlow$ of a vector field $\AFld$ is not global, then we can find a smooth function $\mu:\Mman\to(0,+\infty)$ such that the flow $\AFlow'$ generated by vector field $\AFld'=\mu\AFld$ is global, e.g.~\cite[Cor.~2]{Hart:Top:1983}.
Moreover, $\AFld$ and $\AFld'$ have the same orbit structure, whence $\EAFlow=\EAFlowpr$ and $\DAFlow=\DAFlowpr$.
Thus it could always be assumed that $\AFlow$ is global.
Nevertheless, in \S\ref{sect:open_im_in_im} and \S\ref{sect:open_im_in_im_fixpt} we will consider restriction $\AFld|_{\Wman}$ of $\AFld$ to open subsets $\Wman \subset\Mman$ and compare shift maps of $\AFld$ and $\AFld|_{\Wman}$.
The latter vector field usually generates non-global flow, therefore in this paper it is chosen to work with local flows from the beginning.
In particular, the domain of shift map $\ShA$ of $\AFld$ changes to certain open and convex subset $\funcA\subset\Ci{\Mman}{\RRR}$.

In \S\ref{sect:main_result} we introduce a certain class of vector fields $\FF(\Mman)$ on $\Mman$ and formulate the principal result of the paper: for every $\AFld\in\FF(\Mman)$ its shift map $\ShA$ is a local homeomorphism of $\funcA$ onto its image $\imShA$ with respect to $\Sr{\infty}$ topologies (Theorem~\ref{th:main-result}).
It follows that $\imShA$ is either contractible or homotopy equivalent to the circle, and so is the subspace of $\imShA$ consisting of diffeomorphisms.
On the other hand, in~\cite{Maks:ImSh} it is considered a class of vector fields $\FF'(\Mman)$ which contains $\FF(\Mman)$ and such that for every $\AFld\in\FF'(\Mman)$ the image of its shift map $\imShA$ coincides with either $\EidAFlow{1}$ or $\EidAFlow{0}$ for each $\AFld\in\FF(\Mman)$.
Thus we obtain that $\EidAFlow{r}$ ($\DidAFlow{r}$) coincide with each other at least for all $r\geq1$ and these spaces are either contractible or have homotopy type of $S^1$.
As an application we prove Theorem~\ref{th:Stabf} which extends \cite[Th.~1.3]{Maks:AGAG:2006}.
The rest of the paper is devoted to the proof of Theorem~\ref{th:main-result}.

The assumptions of Theorem~\ref{th:main-result} imply that $\ShA$ is locally injective, therefore in order to prove this theorem it suffices to show that $\ShA$ is open with respect to $\Sr{\infty}$ topologies.
In \S\ref{sect:openness_ShA} we present a characterization of $\contSS{r}{s}$-openness of $\ShA$ for some $r,s\geq0$, (Theorem~\ref{th:sh-open-12}).
First we describe local inverses of $\ShAV$ as certain \myemph{crossed homomorphisms} and then show that like for homomorphisms of groups $\contSS{r}{s}$-openness of the whole map $\ShA$ is equivalent to $\contSS{s}{r}$-continuity of its local inverse defined \myemph{only} on arbitrary small $\Sr{s}$-neighbourhood of the identity map $\id_{\Mman}$.

Further in \S\ref{sect:examples-nontriv-hol} we recall notorious examples of irrational flows on the torus and some modifications of them.
It is shown that shift maps of these flows are not local homeomorphisms onto their images.
This provides counterexamples to~\cite[Th.~1]{Maks:TA:2003} and illustrates certain properties prohibited by Theorem~\ref{th:main-result}.

In \S\ref{sect:regul-ext} we repair~\cite[Lm.~28]{Maks:TA:2003} by giving sufficient conditions for $\contSS{r}{s}$-openness of $\ShA$ to be inherited by regular extensions of vector fields.

\S\ref{sect:LHVectFields} summarizes the formulas for local inverses of shift maps of linear flows obtained in~\cite{Maks:TA:2003} and ``reduced'' Hamiltonian flows of homogeneous polynomials in two variables obtained in~\cite{Maks:CEJM:2009,Maks:hamv2}.
Estimations of continuity of these formulas for linear flows were based on incorrect ``division lemma'' \cite[Lm.~32]{Maks:TA:2003}, see~\S\ref{rem:error_cont_div}.
We will show how to avoid referring to this lemma.

\S\ref{sect:main-result} provides a correct version of~\cite[Th.~17]{Maks:TA:2003}, see Theorem~\ref{th:Sh-open-map}.
It reduces verification of openness of $\ShA$ to openness of a family $\{\ShAVi\}$ of ``local shift maps'' corresponding to any locally finite cover $\{\Vman_i\}$ of $\Mman$ e.g. by arbitrary small smooth closed disks.
The essentially new additional object here is a finite subset $\Lambda'\subset\Lambda$ which appears due to the construction~\eqref{equ:MN_inters} of $\Sr{\infty}$-open set $\ANbh$ and by existence of singularities for which ``local shift map'' is $\contSS{\infty}{\infty}$-open but not $\contSS{r}{d(r)}$-open for some function $d:\NNN\to\NNN$.
Without finiteness of $\Lambda'$ the shift map $\ShA$ is open if its image is endowed with the so-called \myemph{very-strong topology}, \cite{Illman:OJM:2003}.
This effect appears of course only on non-compact manifolds.

\S\ref{sect:repl_M_by_W} splits the ``global analytical problem'' of verification of openness of local shift map $\ShAV$ into the following two problems
\begin{enumerate}
\item[\rm(i)]
openness of local shift map $\ShAWV$ corresponding to the restriction $\AFld|_{\Wman}$ to arbitrary open neighbourhood $\Wman$ of $\Vman$, and 
\item[\rm(ii)]
openness of the image $\imShAWV$ in $\imShAV$.
\end{enumerate}
Due to Theorem~\ref{th:Sh-open-map} the set $\Vman$ can also be taken arbitrary small.
Hence to solve (i) it suffices to consider vector fields in $\RRR^n$.
Thus (i) is a ``local analytical'' problem.
Its solutions for certain vector fields are given in \S\ref{sect:LHVectFields}.

\S\ref{sect:open_im_in_im} and \S\ref{sect:open_im_in_im_fixpt} present sufficient conditions for resolving (ii) at regular and singular points respectively (Theorem~\ref{th:ShAVopen_reg} and Lemma~\ref{lm:prQ_singpt}).
In both cases the assumptions on $\AFld$ are formulated in the terms of dynamical systems theory, and so the problem (ii) can be regarded as a ``global topological'' one.
In the regular case these conditions are also necessary.
Moreover, in the singular case they are relevant with the notion of an isolated block introduced by C.\;Conley and R.\;Easton in~\cite{ConleyEaston:TRAMS:1971}, see Lemma~\ref{lm:isol_block}.

Finally in \S\ref{sect:proof_th:main-result} we prove Theorem~\ref{th:main-result}.

\subsection{Preliminaries.}
Put $\Nz=\NNN\cup\{0\}$ and $\Nzi=\NNN\cup\{0,\infty\}$.

Let $\Mman$ be a smooth manifold of dimension $m$.
Glue to $\Mman$ a collar $\partial\Mman\times[0,1)$ by the identity map $\partial\Mman\times0 \to \partial\Mman$ and denote the obtained manifold by $\Mman'$.
If $\partial\Mman=\varnothing$, then $\Mman'=\Mman$.
Evidently, $\Mman'$ has smooth structure in which it is diffeomorphic with the interior $\Int\Mman$.
Moreover $\Mman$ is a closed subset of $\Mman'$ and $\partial\Mman'=\varnothing$.
We will call $\Mman'$ a \myemph{collaring} of $\Mman$, see Figure~\ref{fig:dsubman}.

\begin{definition}\label{def:D-submanif}
A closed subset $\Vman\subset\Mman$ will be called a \myemph{$\Dm$-subma\-nifold} if there exists an $m$-dimensional  submanifold $\Vman'\subset\Mman'$ possibly with boundary and such that $\Vman=\Mman\cap\Vman'$ and the intersection $\partial\Mman\cap\partial\Vman'$ is transversal, see Figure~\ref{fig:dsubman}.

Denote by $\Int\Vman$ the interior of $\Vman$ in $\Mman$.
Then $\Int\Vman=\Vman\setminus\partial\Vman'$.
We will also say that $\Vman$ is \myemph{$\Dm$-neighbourhood} of every point $z\in\Int\Vman$.
\end{definition}

Let $\Vman\subset\Mman$ be a $\Dm$-submanifold.
If $\Vman\cap\partial\Mman=\varnothing$, then $\Vman$ is a manifold with boundary, otherwise, $\Vman$ is a \myemph{manifold with corners} with $\partial\Mman\cap\partial\Vman'$ as the set of corners.
Evidently, $\Mman$ is a $\Dm$-submanifold of itself, and each $z\in\Int\Mman$ has arbitrary small $\Dm$-neighbourhoods each diffeomorphic to a closed $m$-disk.

Let $\Nman$ be another smooth manifold, $\Nman'$ be its collaring, and $\amap:\Vman\to\Nman$ be a map.
We say that $\amap$ \myemph{is of class $\Cont{r}$}, $r\in\Nzi$, (\myemph{embedding}, \myemph{immersion}, etc.) if it extends to a $\Cont{r}$ map (embedding, immersion, etc.) $U\to\Nman'$ from some open neighbourhood $U$ of $\Vman$ in $\Mman'$ into the collaring $\Nman'$ of $\Nman$.

Then it is known well-known that $\amap$ is $\Cont{r}$ if, and only if, the restriction $\amap|_{\Int\Vman\setminus\partial\Mman}$ is $\Cont{r}$ and all its derivatives have continuous limits when $x$ tends to some point $y\in\partial\Vman'$, see e.g.~\cite{Whitney:TAMS:1934, Seeley:PAMS:1964, Mityagin:UMN:1961, Frerick:JRAM:2007} for manifolds with boundary and e.g~\cite[Pr.~2.1.10]{MargalefOuterelo:1992} for manifolds with corners.

\begin{center}
\begin{figure}[ht]
\includegraphics[height=2cm]{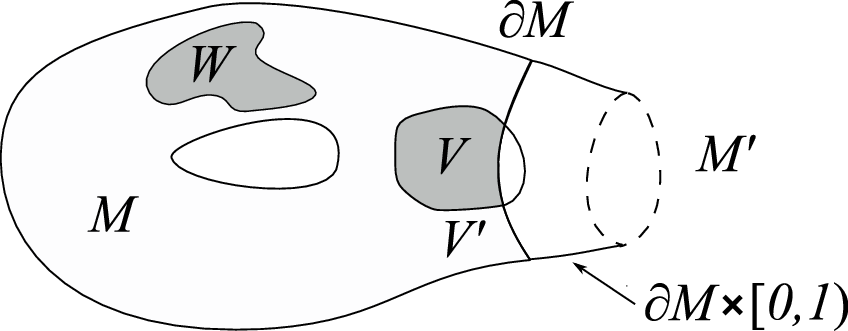} 
\caption{$\Dm$-submanifolds $V$ and $W$}
\protect\label{fig:dsubman}
\end{figure}
\end{center}

Denote by $\Cont{r}(\Vman,\Nman)$, $r\in\Nzi$, the space of all $\Cont{r}$ maps $\Vman\to\Nman$.
Then similarly to~\cite{Hirsch:DiffTop} this space can be endowed with \myemph{weak\/} $\Wr{r}$ and \myemph{strong} $\Sr{r}$ topologies.
If $\Vman$ is compact then $\Wr{r}$ and $\Sr{r}$ coincide for all $r\in\Nzi$.
For a subset $\XX\subset \Cont{r}(\Vman,\Nman)$ denote by $\bigl(\XX\bigr)^{r}_{W}$ (resp. $\bigl(\XX\bigr)^{r}_{S}$) the set $\XX$ with the induced $\Wr{r}$ (resp. $\Sr{r}$) topology, $r\in\Nzi$.

\begin{definition}\label{defn:r-homotopy}
A homotopy $H:\Vman\times I\to\Nman$ will be called an \myemph{$r$-homotopy}, $(r\in\Nzi)$, if for every $t\in I$ the map $H_t=H(\cdot,t):\Vman\to\Nman$ is $\Cont{r}$ and all its partial derivatives in $x\in\Vman$ up to order $r$ are continuous in $(x,t)$.
In other words, $H$ is an \myemph{$r$-homotopy} if, and only if, it yields a continuous path $I\to \Cont{r}(\Vman,\Nman)$ from the standard topology of $I$ to the $\Wr{r}$ topology of $\Cont{r}(\Vman,\Nman)$.
\end{definition}
Hence for every $\amap\in\XX$ its path connected component in $\bigl(\XX\bigr)^{r}_{W}$ consists of all $\bmap\in\XX$ which are $r$-homotopic to $\amap$ in $\XX$.

Now let $\Vman_1, \Vman_2$ be $\Dm$-submanifolds of some smooth manifolds, $\Nman_1,\Nman_2$ be two smooth manifolds, $\XX\subset \Ci{\Vman_1}{\Nman_1}$ and $\YY\subset \Ci{\Vman_2}{\Nman_2}$ be two subsets, and $F:\XX\to\YY$ be a map.

\begin{definition}\label{defn:cont}
We say that $F$ is \myemph{$\contWW{r}{s}$-continuous\/} (\myemph{-open}, etc.) for some $r,s\in\Nzi$ if it is continuous (open, etc.) as a map $F:\bigl(\XX\bigr)^{r}_{W}\to \bigl(\YY\bigr)^{s}_{W}.$
\end{definition}

Similarly we can define \myemph{$\contSS{r}{s}$-continuous\/} (\myemph{-open}, etc.) maps with respect to strong topologies.

Notice that the statement that $F$ is \myemph{$\contWW{\infty}{\infty}$-continuous at some $x\in \XX$} means that for every $s\geq0$ and a $\Wr{s}$-open neighbourhood $V_{F(x)} \subset \YY$ of $F(x)$ there exist $r\geq0$ and a $\Wr{r}$-neighbourhood $U_x \subset \XX$ of $x$ both depending on $s$ and $V_{F(x)}$ such that $F(U_x) \subset V_{F(x)}$.
But in general such a map can be not $\contWW{r}{s}$-continuous for any $r,s$, see e.g.~\cite[p.~93. Eq.~(2)]{Mather_1:AnnMath:1968}, and~\cite{MostowShnider:TrAMS:1985}.

\section{Shift map}\label{sect:shift_map}
Let $\AFld$ be a vector field on $\Mman$ tangent to $\partial\Mman$.
Then for every $x\in\Mman$ its \myemph{orbit} with respect to $\AFld$ is a unique mapping $\orb_x: \RRR\supset(a_x,b_x) \to \Mman$ such that $\orb_x(0)=x$ and $\frac{d}{dt}\orb_x = \AFld(\orb_x)$, where $(a_x,b_x) \subset\RRR$ is the maximal interval on which a map with the previous two properties can be defined.
By standard theorems in ODE the following subset of $\Mman\times\RRR$ 
$$
\domA = \mathop\cup\limits_{x\in\Mman} x \times (a_x, b_x),
$$
is an open, connected neighbourhood of $\Mman\times0$ in $\Mman\times\RRR$.
Then the \myemph{local flow\/} of $\AFld$ is the following map
$$\AFlow: \Mman\times\RRR \; \supset \;\domA\longrightarrow\Mman,
\qquad
\AFlow(x,t) = \orb_x(t).
$$
If $\Mman$ is compact, then $\domA=\Mman\times\RRR$, e.g.~\cite{PalisdeMelo:1982}.

Notice that is $x\in\Mman$ is either fixed or periodic for $\AFlow$, then $x\times\RRR\subset\domA$.

\begin{example}\rm
Let $\AFld(x)=x^2\frac{d}{dx}$ be a vector field on $\RRR^1$.
Then it is easy to see that it generates the following local flow $\AFlow(x,t) = \frac{x}{1-xt}$.
Hence $\AFlow$ is defined on the subset $\domA \subset\RRR^2$ bounded by the hyperbola $xt=1$, see Figure~\ref{fig:domflow}.
\begin{center}
\begin{figure}[ht]
\includegraphics[width=3cm]{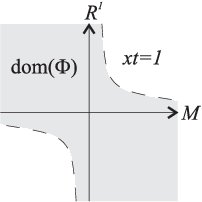} 
\caption{Domain $\domA$ of the flow $\AFlow$ of the vector field $\AFld(x)=x^2\frac{d}{dx}$}
\protect\label{fig:domflow}
\end{figure}
\end{center}
\end{example}
Let $\Vman \subset \Mman$ be either an open subset or a $\Dm$-submanifold and $i_{\Vman}:\Vman\subset\Mman$ be the identity inclusion.
Denote by $\EAV$ the subset of $\Ci{\Vman}{\Mman}$ consisting of maps $\amap:\Vman\to\Mman$ such that (i)~$\amap(\orb\cap\Vman)\subset\orb$ for every orbit $\orb$ of $\AFld$, and (ii)~$\amap$ is an embedding at every singular point $z\in\FixA$.
Let also $\DAV \subset \EAV$ be the subset consisting of immersions $\Vman\to\Mman$.

Let $\EidAV{r}$ and $\DidAV{r}$ be the path-components of $i_{\Vman}$ in the spaces $\EAV$ and $\DAV$ respectively endowed with the corresponding topologies $\Wr{r}$.

Denote by $\funcAV$ the subset of $C^{\infty}(\Vman,\RRR)$ consisting of functions $\afunc$ whose graph $\Gamma_{\afunc}=\{(x,\afunc(x)) \ : \ x\in\Vman\}$ is contained in $\domA$.
Then we can define the following map
$$\ShAV: C^{\infty}(\Vman,\RRR) \;\; \supset  \;\; \funcAV \;\longrightarrow\; C^{\infty}(\Vman,\Mman)$$ 
by $\ShAV(\afunc)(x) = \AFlow(x,\afunc(x))$.
We will call $\ShAV$ the \myemph{(local) shift map} of $\AFld$ on $\Vman$ and denote its image in $\Ci{\Mman}{\Mman}$ by $\imShAV$.
Put 
$$ \GVp=\{\afunc\in\funcAV \ : \ d\afunc(\AFld)(x)>-1 \ \forall x\in\Vman \},$$
where $d\afunc(\AFld)$ is a Lie derivative of $\afunc$ along $\AFld$.
Since $\funcAV$ is an $\Sr{0}$-open and convex subset of $\Ci{\Vman}{\RRR}$ and the map 
$$\LL_{\AFld}:\Ci{\Vman}{\RRR}\to\Ci{\Vman}{\RRR},
\qquad
\LL_{\AFld}(\afunc)=d\afunc(\AFld)
$$
is evidently linear and $\contSS{1}{0}$ continuous, we see that $\GVp$ is convex and $\Sr{1}$-open in $\Ci{\Vman}{\RRR}$.
It also follows from~\cite[Lm.~20 \& Cor.~21]{Maks:TA:2003} that 
\begin{equation}\label{equ:GV}
\GVp=\ShAV^{-1}\bigl(\DidAV{1}\bigr), 
\qquad
\imShAV \subset \EAV.
\end{equation}

\begin{lemma}\label{lm:inclusions}{\rm\cite{Maks:MFAT:2009}}
The following inclusions hold true:
\begin{gather*}
\imGVp \subset \DidAV{\infty} \subset \cdots \subset \DidAV{r} \subset \cdots \subset \DidAV{0}, \\
\imShAV \subset \EidAV{\infty} \subset \cdots \subset \EidAV{r} \subset \cdots \subset \EidAV{0}.
\end{gather*}
If $\imShAV \subset \EidAV{r}$, then $\imGVp=\DidAV{r}$.
\hfill\hfill\qed
\end{lemma}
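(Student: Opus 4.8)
The statement to prove is Lemma~\ref{lm:inclusions}, asserting a chain of inclusions among identity components $\DidAV{r}$ and $\EidAV{r}$ and the images of shift maps, plus the claim that $\imShAV\subset\EidAV{r}$ forces $\imGVp=\DidAV{r}$. Although the lemma is attributed to~\cite{Maks:MFAT:2009}, let me sketch how I would prove it from the material developed so far.

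The plan is as follows. First I would establish the nesting $\EidAV{\infty}\subset\cdots\subset\EidAV{r}\subset\cdots\subset\EidAV{0}$ (and likewise for $\mathcal D$): this is essentially formal from Definition~\ref{defn:r-homotopy}, since an $r$-homotopy is automatically an $s$-homotopy for every $s\le r$ (more derivatives controlled implies fewer derivatives controlled, and continuity of higher partials in $(x,t)$ implies continuity of lower ones). Hence a path joining $\amap$ to $i_{\Vman}$ in $\bigl(\EAV\bigr)^{r}_{W}$ is also a path in $\bigl(\EAV\bigr)^{s}_{W}$, giving $\EidAV{r}\subset\EidAV{s}$ for $r\ge s$. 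The inclusion $\DidAV{r}\subset\EidAV{r}$ is immediate from $\DAV\subset\EAV$.

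Next I would show $\imShAV\subset\EidAV{\infty}$ and $\imGVp\subset\DidAV{\infty}$. Given $\afunc\in\funcAV$, convexity and $\Sr{0}$-openness of $\funcAV$ let me form the linear homotopy $\afunc_t=t\afunc$, which stays in $\funcAV$ for $t\in[0,1]$; then $H(x,t)=\AFlow(x,t\afunc(x))$ is a smooth homotopy from $i_{\Vman}=\ShAV(0)$ to $\ShAV(\afunc)$, and since $\AFlow$ is $\Cont\infty$ on $\domA$ all partials in $x$ are jointly continuous in $(x,t)$, so this is an $\infty$-homotopy; by~\eqref{equ:GV} it lies in $\EAV$ throughout, giving $\ShAV(\afunc)\in\EidAV{\infty}$. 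If moreover $\afunc\in\GVp$, then because $\GVp$ is convex each $\afunc_t=t\afunc$ also lies in $\GVp$ (note $d(t\afunc)(\AFld)=t\,d\afunc(\AFld)>-t\ge -1$), so by the first identity in~\eqref{equ:GV} each $\ShAV(\afunc_t)\in\DidAV{1}\subset\DAV$, and the homotopy runs inside $\DAV$; thus $\ShAV(\afunc)\in\DidAV{\infty}$. This gives the two displayed chains.

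Finally, the implication ``$\imShAV\subset\EidAV{r}$ $\Rightarrow$ $\imGVp=\DidAV{r}$''. One inclusion $\imGVp\subset\DidAV{r}$ is already in the first chain. For the reverse, I would use~\eqref{equ:GV}: $\GVp=\ShAV^{-1}(\DidAV{1})$ and $\imShAV\subset\EAV$. Take $\bmap\in\DidAV{r}$; since $\DidAV{r}\subset\EidAV{r}$ and the hypothesis $\imShAV\subset\EidAV{r}$ combined with results of~\cite{Maks:TA:2003} (the fact that $\imShAV=\EidAV{0}$ in the relevant settings, or more precisely that $\ShAV$ surjects onto the identity component) identifies $\bmap=\ShAV(\afunc)$ for some $\afunc\in\funcAV$; then $\bmap\in\DAV$ forces, via~\eqref{equ:GV}, $\afunc\in\GVp$, so $\bmap\in\imGVp$. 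The main obstacle here — and the reason the lemma is quoted rather than reproved — is precisely this surjectivity input: one needs that every element of $\EidAV{r}$ (for $r$ as in the hypothesis) is actually a shift $\ShAV(\afunc)$, which is the substantive content lying behind equation~\eqref{equ:GV} and~\cite[Lm.~20 \& Cor.~21]{Maks:TA:2003}. Once that is granted, the rest is the bookkeeping above; so I would isolate and cite that surjectivity statement, then assemble the inclusions formally.
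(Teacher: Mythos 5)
The paper offers no proof of this lemma—it is simply cited to~\cite{Maks:MFAT:2009}—so there is no internal argument to compare yours against; I can only check correctness.

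Your handling of the two displayed chains is sound. The monotonicity $\EidAV{r}\subset\EidAV{s}$ for $r\geq s$ does indeed follow formally from Definition~\ref{defn:r-homotopy}, and the linear homotopy $H(x,t)=\AFlow(x,t\afunc(x))$ is an $\infty$-homotopy exploiting convexity of $\funcAV$ (resp.\ $\GVp$); your verification that $t\afunc\in\GVp$ via $d(t\afunc)(\AFld)=t\,d\afunc(\AFld)>-t\geq-1$ is exactly the right calculation. This establishes $\imShAV\subset\EidAV{\infty}$ and $\imGVp\subset\DidAV{\infty}$ cleanly.

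The last sentence of the lemma is where your write-up goes astray, and for an understandable reason: as printed, the hypothesis ``$\imShAV\subset\EidAV{r}$'' is implied by the chain you have just proved and is therefore vacuous. It must be a misprint for $\imShAV=\EidAV{r}$ (equivalently $\imShAV\supset\EidAV{r}$, the reverse inclusion being the only nontrivial content). You sense this—you say surjectivity is ``the substantive content''—but you then attribute that surjectivity to equation~\eqref{equ:GV} and~\cite[Lm.~20 \& Cor.~21]{Maks:TA:2003}. That attribution is wrong: those results give (a) $\GVp=\ShAV^{-1}(\DidAV{1})$, i.e.\ the characterization of which $\afunc$ produce immersions, and (b) $\imShAV\subset\EAV$; neither yields surjectivity of $\ShAV$ onto $\EidAV{r}$, which is a deep statement (it is precisely the subject of~\cite{Maks:ImSh} and holds only under strong hypotheses on $\AFld$). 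The correct structure is: the hypothesis, read as equality, hands you surjectivity for free; then for $\bmap\in\DidAV{r}\subset\EidAV{r}=\imShAV$ write $\bmap=\ShAV(\afunc)$ and use~\eqref{equ:GV}—which is where~\cite[Lm.~20 \& Cor.~21]{Maks:TA:2003} actually enter—to deduce from $\bmap\in\DAV$ that $\afunc\in\GVp$. (Here it is safer to appeal to the underlying immersion criterion from those results rather than route through $\DidAV{1}$, since for $r=0$ one does not a priori have $\DidAV{0}\subset\DidAV{1}$.) With that repair the argument closes correctly.
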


The set $\ZidAV \defeq \ShAV^{-1}(i_{\Vman})$ will be called the \myemph{kernel\/} of $\ShAV$.

\begin{lemma}\label{lm:Shift-map-prop}
The following properties of shift map hold true:

{\rm(1)}~$\ShAV$ is $\contWW{r}{r}$- and $\contSS{r}{r}$-continuous for all $r\geq 0$, {\rm\cite[Lm.~2]{Maks:TA:2003}}.

{\rm(2)}~$d\mu(\AFld)=0$ for every $\mu\in\ZidAV$, {\rm\cite[Lm.~7]{Maks:TA:2003}}.

{\rm(3)}~Let $\afunc,\bfunc\in\funcAV$. 
Then, {\rm\cite[Lm.~7]{Maks:TA:2003}}, the following conditions are equivalent:
  \begin{enumerate}
    \item[\rm(a)]
       $\ShAV(\afunc)=\ShAV(\bfunc)$ 
    \item[\rm(b)]
     $\afunc-\bfunc\in\funcAV$ and $\ShAV(\afunc-\bfunc)=i_{\Vman}$, i.e. $\afunc-\bfunc\in\ZidAV$.
  \end{enumerate}

{\rm(4)}~$\ShAV$ is locally injective with respect to each of $\Wr{r}$ or $\Sr{r}$ topologies of $\funcAV$ if and only if $\FixA\cap\Vman$ is nowhere dense in $\Vman$, {\rm\cite[Pr.~14]{Maks:TA:2003}}.

{\rm(5)}~Suppose that $\Vman$ is connected and that $\Vman\cap\FixA$ is nowhere dense in $\Vman$.
Then, {\rm\cite[Th.~12(2)]{Maks:TA:2003}},
\begin{enumerate}
 \item[\rm(a)]
either $\ZidAV=\{0\}$ and thus $\ShAV$ is injective.
This case holds if $\Vman$ contains either a non-periodic point of $\AFlow$ or a fixed point $z\in\Vman\cap\FixA$ such that the tangent linear flow $T_{z}\AFlow_t$ on the tangent space $T_z\Mman$ is the identity;
\item[\rm(b)]
or $\ZidAV=\{n\,\nu\}_{n\in\ZZZ}$ for some smooth $\nu:\Vman\to(0,\infty)$.
In this case $\Vman\times\RRR\subset\domA$, so $\funcAV= \Ci{\Vman}{\RRR}$.
\end{enumerate}
\end{lemma}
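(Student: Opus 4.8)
The five assertions are quoted from \cite{Maks:TA:2003}, and I would prove them in the order (1), (2), then (3)--(4), then (5). \textbf{Continuity and the kernel identity.} For (1) the plan is to write $\ShAV(\afunc)=\AFlow\circ(\id_{\Vman},\afunc)$ and apply the classical chain-rule (Fa\`a di Bruno) estimates for composition with the \emph{fixed} $\Cont{\infty}$ map $\AFlow$: over any compact $K\subset\Vman$ the $r$-jet of $\ShAV(\afunc)$ at a point is a universal polynomial in the $r$-jet of $\afunc$ there and the $r$-jet of $\AFlow$ along the compact piece of $\Gamma_{\afunc}$ lying over $K$, on which $\AFlow$ and its derivatives are bounded; this gives $\contWW{r}{r}$-continuity, and since the estimate is purely local in $\Vman$ it gives $\contSS{r}{r}$-continuity as well. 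For (2) I would fix $x$, put $c(s)=\AFlow(x,s)$, and differentiate $\AFlow(c(s),\mu(c(s)))=c(s)$ in $s$; using $\partial_t\AFlow(y,t)=\AFld(\AFlow(y,t))$, the identity $\AFlow_{\mu(c(s))}(c(s))=c(s)$, and the fact that $T_{c(s)}\AFlow_{\mu(c(s))}$ fixes $\AFld(c(s))$ (a flow commutes with its own field), the differentiated identity collapses to $\AFld(c(s))\cdot\tfrac{d}{ds}\mu(c(s))=0$, whence $d\mu(\AFld)=0$ away from $\FixA$ and trivially on it.

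\textbf{Coincidences and local injectivity.} Statement (3) I would settle pointwise on each orbit: $\afunc(x)$ and $\bfunc(x)$ lie in $(a_x,b_x)$ and are identified by $\AFlow(x,\cdot)$, so either the orbit is non-closed and $\afunc(x)=\bfunc(x)$, or it is periodic or $x$ is fixed, in which case $(a_x,b_x)=\RRR$, $\afunc(x)-\bfunc(x)$ is a period and $\AFlow(x,\afunc(x)-\bfunc(x))=x$; in all cases $\afunc-\bfunc\in\funcAV$ lies in $\ZidAV$, which gives (a)$\Rightarrow$(b), while (b)$\Rightarrow$(a) is immediate from the flow group law. For the ``if'' half of (4) I would combine (2) and (3): a kernel element is constant along orbits, equals a period of the orbit at each regular point, and near any regular point the periods are bounded away from $0$ (flow box), so a kernel element small enough on a neighbourhood vanishes there on the dense regular set, hence everywhere; a suitable $\Sr{r}$- (resp.\ $\Wr{r}$-, on compacta) neighbourhood of $0$ in $\funcAV$ realises the required uniform smallness. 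The ``only if'' half: if $\FixA\cap\Vman$ has nonempty interior $U$ then $\ShAV(\afunc)|_U=\id_U$ for every $\afunc$, so any small bump function supported in $U$ is a nonzero element of $\ZidAV$, contradicting local injectivity at $0$.

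\textbf{Structure of the kernel.} First I would record that $\ZidAV$ is a subgroup of $(\Ci{\Vman}{\RRR},+)$, closure under $+$ and $-$ following from the group law together with the observation that wherever a kernel element is nonzero the orbit is closed, so its whole time-line lies in $\domA$. If $\ZidAV=\{0\}$ we are in case (a), and the two stated sufficient conditions follow from (2) and the flow-box openness argument of (4): a non-closed orbit in $\Vman$, or a fixed point with trivial tangent flow, forces every $\mu\in\ZidAV$ to vanish on a nonempty open-and-closed subset of the connected manifold $\Vman$, hence identically. If instead $\ZidAV\neq\{0\}$, the contrapositive shows every orbit meeting $\Vman$ is periodic or a fixed point, so $\Vman\times\RRR\subset\domA$ and $\funcAV=\Ci{\Vman}{\RRR}$; then I would let $\nu(x)$ be the least positive period of the orbit of $x$ for regular $x$, prove $\nu$ is $\Cont{\infty}$ on $\Vman\setminus\FixA$ via a local Poincar\'e section, extend it to a $\Cont{\infty}$ function across the nowhere-dense set $\FixA\cap\Vman$ using nontriviality of the tangent flow there, and finally observe that for $\mu\in\ZidAV$ the function $\mu/\nu$ is continuous and $\ZZZ$-valued on the connected dense set $\Vman\setminus\FixA$, hence a constant $n$, so $\mu=n\nu$; as $\nu\in\ZidAV$ by construction, this yields $\ZidAV=\nu\ZZZ$.

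\textbf{Expected obstacle.} Parts (1)--(4) and the first half of (5) are flow-box bookkeeping; the genuine difficulty is the $\Cont{\infty}$ extension of the least-period function $\nu$ across $\FixA\cap\Vman$ in (5). One has to check that the hypotheses in force there --- no non-closed orbit in $\Vman$ and nontrivial tangent flow at each fixed point --- really do force, after passing to linearising or normal-form coordinates around a fixed point $z$, a smooth germ for $\nu$ agreeing with the period of the linear part, and this must be done uniformly over the admissible local models of $z$. That is where the cited proof does its real work, and where I would expect to spend most of mine.
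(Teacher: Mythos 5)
This lemma is a recapitulation: the paper does not prove it here but cites \cite[Lm.~2, Lm.~7, Pr.~14, Th.~12(2)]{Maks:TA:2003} item by item, so there is no in-paper proof to compare against line by line. Your sketch is a plausible reconstruction along the lines one would expect, and you are right that the genuine work lies in establishing the structure of $\ZidAV$ in part~(5).

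A few places where the sketch is thinner than it looks. In~(4) the ``if'' direction, a $\Wr{r}$-neighbourhood of $\zer$ only controls $\mu$ on a compactum, so if $\Vman$ is non-compact the flow-box argument alone gives $\mu=0$ only there; to conclude $\mu\equiv 0$ on all of $\Vman$ you need to invoke the global structure of $\ZidAV$ from~(5) (that any nonzero kernel element is everywhere bounded below by the positive generator $\nu$), or else restrict attention to compact $\Vman$ where $\Wr{r}=\Sr{r}$, which is in fact the situation the paper later uses. In~(5)(a) you assert that a fixed point $z$ with trivial tangent flow ``forces every $\mu\in\ZidAV$ to vanish on a nonempty open-and-closed subset'', but you never actually derive anything from the hypothesis $T_z\AFlow_t=\id$; the mechanism by which a vanishing $1$-jet of $\AFld$ at $z$ kills the kernel is precisely one of the nontrivial steps of \cite[Th.~12]{Maks:TA:2003} and is not supplied by your flow-box considerations, which only apply near regular points. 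Finally, you should check that $\mu/\nu$ being $\ZZZ$-valued on the (possibly disconnected) set $\Vman\setminus\FixA$ implies it is constant: the cleanest route is to note $\mu/\nu$ is continuous on the connected set $\Vman$ itself (as $\nu>0$) and $\ZZZ$-valued on a dense subset, hence $\ZZZ$-valued and constant on all of $\Vman$.

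Your main and explicitly acknowledged gap --- proving that the minimal period function $\nu$ extends smoothly across $\FixA\cap\Vman$ when the tangent flow there is nontrivial --- is exactly where the cited theorem does its real work, and your sketch correctly isolates it without resolving it. Parts (1), (2), (3), and the ``only if'' half of (4) are fine as outlined.
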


\section{Main result}\label{sect:main_result}
Let $\Mman$ be a smooth manifold of dimension $m$.
We will introduce a class $\FF(\Mman)$ of smooth vector fields $\AFld$ on $\Mman$ satisfying certain \myemph{topological} conditions on regular points and certain \myemph{analytical} conditions on singular points.
The main result describes the homotopy types of the identity path components of $\EAFlow$ and $\DAFlow$.
First we give some definitions.

{\bf Recurrent points.}
Let $\AFld$ be a vector field on $\Mman$ and $z\in\Mman$ be a regular point of $\AFld$, i.e. $\AFld(z)\not=0$.
Then $z$ is called \myemph{recurrent} if there exists a sequence $\{t_i\}_{i\in\NNN}$ of real numbers such that $\lim\limits_{i\to\infty}|t_i|=\infty$ and $\lim\limits_{i\to\infty}\AFlow(z_,t_i)=z$.
In particular, every periodic point is recurrent.

{\bf First recurrence map.}
Let $z$ be a periodic point of $\AFld$, $\orb_{z}$ be its orbit, and $\trans\subset\Mman$ be and open $(m-1)$-disk passing through $z$ and transversal to $\orb_{z}$.
Then we can define a germ at $z$ of the so-called \myemph{first return (or Poincar\'e) map}  $R:(\trans,z)\to (\trans,z)$ associating to every $x\in\trans$ the first point $R(x)$ at which the orbit $\orb_{x}$ of $x$ first returns to $\trans$.
It is well-known that $R$ is a diffeomorphism at $z$, e.g.~\cite{PalisdeMelo:1982}.

{\bf Reduced Hamiltonian vector fields.}
Let $g:\RRR^2\to\RRR$ be a homogeneous polynomial of degree $p\geq 2$, so we can write
\begin{equation}\label{equ:homog-poly}
g=L_1^{l_1} \cdots L_{a}^{l_a}  \cdot Q_1^{q_1} \cdots Q_{b}^{q_b},
\end{equation}
where $L_i$ is a non-zero linear function, $Q_j$ is an irreducible over $\RRR$ (definite) quadratic form, $l_i,q_j\geq 1$, $L_i/L_{i'}\not=\mathrm{const}$ for $i\not=i'$, and $Q_j/Q_{j'}\not=\mathrm{const}$ for $j\not=j'$.
Denote
$$
D = L_1^{l_1-1} \cdots L_{a}^{l_a-1}  \cdot Q_1^{q_1-1} \cdots Q_{b}^{q_b-1}.
$$
Then $g=L_1 \cdots L_{a} \cdot Q_1 \cdots Q_{b}\cdot D$ and $D$ is the greatest common divisor of the partial derivatives $g'_{x}$ and $g'_{y}$.
The following vector field on $\RRR^2$:
$$
\AFld(x,y)=-(g'_{y}/D)\,\frac{\partial}{\partial x} + (g'_{x}/D)\, \frac{\partial}{\partial y}
$$ 
will be called the \myemph{reduced Hamiltonian} vector field of $g$.
In particular, if $g$ has no multiple factors, i.e. $l_i=q_j=1$ for all $i,j$, then $D\equiv1$ and $\AFld$ is the usual \myemph{Hamiltonian} vector field of $g$.

Notice that $dg(\AFld)\equiv0$ and the coordinate functions of $\AFld$ are homogeneous polynomials of degree $\deg\AFld=a+2b-1$ being relatively prime in the ring of polynomials $\RRR[x,y]$.

{\bf ``Elementary'' singularities.}
Let $\AFld$ be a vector field on $\RRR^{k}$.
It can be regarded as a map $\AFld:\RRR^{k}\to\RRR^{k}$.
Define the following \myemph{types} $\typeZero$, $\typeLinear$, and $\typeHamilt$ of such vector fields.

{\bf Type $\typeZero$}:~\ ~$\AFld(x)\equiv 0$.
 
{\bf Type $\typeLinear$}:~\ ~$\AFld(x)=\afunc(x)\cdot Ax$, \ where $A$ is a non-zero $(k\times k)$-matrix, and $\afunc:\RRR^n\to(0,+\infty)$ is a $\Cont{\infty}$ strictly positive function.
Let $\lambda_{1},\ldots,\lambda_{k}$ be all the eigen values of $A$ taken with their multiplicities.
We can also assume that $A$ has a \myemph{real} Jordan normal form.
Then we will distinguish the following particular cases.

\newcommand\typeLinPH{{\mathrm{(L1)}}}
\newcommand\typeLinNilp{{\mathrm{(L2)}}}
\newcommand\typeLinRotExt{{\mathrm{(L3)}}}
\newcommand\typeLinRot{{\mathrm{(L4)}}}
\newcommand\typeLinNonRot{{\mathrm{(L5)}}}

\newcommand\Rab{\begin{smallmatrix} a & b \\ -b & a\end{smallmatrix}}
\newcommand\Rzb{\begin{smallmatrix} 0 & b \\ b & 0 \end{smallmatrix}}
\newcommand\Rzbi[1]{\begin{smallmatrix} 0 & #1 \\ -#1 & 0 \end{smallmatrix}}
\newcommand\Ematr{\begin{smallmatrix} 1 & 0 \\ 0 & 1\end{smallmatrix}}

$\typeLinPH$~\myemph{$\Re(\lambda_j)\not=0$ for some $j=1,\ldots,k$;}

$\typeLinNilp$~\myemph{The real Jordan normal form has the block}
$$\left.\left\|
\begin{smallmatrix}
0 & 1 &  \\ 
  & 0 & 1  \\
  &   & \cdots & \cdots \\
  &   &     & 0   & 1 \\
  &   &     &     & 0
\end{smallmatrix}
\right\| \right\} n, \qquad n\geq 2;
$$

$\typeLinRotExt$~\myemph{The real Jordan normal form has the block}
$$
\qquad
\left.\left\|
\begin{smallmatrix}
    \Rzb  &    \Ematr   &  \\ 
 \cdots & \cdots & \cdots  \\
           &   \Rzb & \Ematr \\
           &    & \Rzb 
\end{smallmatrix}
\right\| \right\} 2n, \qquad \ (n \geq 2, b\not=0).
$$

In all other cases $A$ is similar to the matrix:
$$\left\|
\begin{smallmatrix}
\Rzbi{b_1}  &         \\ 
            &  \cdots    & \\
            &            & \Rzbi{b_n} \\
            &            & &   0 \\
            &            & &   & \cdots \\
            &            & &   & & 0 
\end{smallmatrix}
\right\|, \qquad n\geq1, \ \ b_j\not=0 \ \forall j=1,\ldots,n.$$
Then we will also distinguish the following two cases:

$\typeLinRot$~
\myemph{there exists $\tau>0$ such that $b_j\tau\in\ZZZ$ for all $j$};

$\typeLinNonRot$~
\myemph{such $\tau$ as in $\typeLinRot$ does not exist}.

{\bf Type $\typeHamilt$}:~\ ~$\AFld(x,y)=\afunc(x,y)\cdot (-g'_{y}/D, g'_{x}/D)$ \ for some strictly positive $\Cont{\infty}$ function $\afunc:\RRR^2\to(0,+\infty)$, and a homogeneous polynomial~\eqref{equ:homog-poly} such that $a+2b-1\geq 2$, so $\AFld$ is not of type $\typeLinear$.
Again we separate the following cases:
\begin{enumerate}
 \item[$\typeHamiltExtr$]
\myemph{$a=0$ and $b\geq2$, i.e. $g$ is a product of at least two distinct definite quadratic forms and has no linear multiples.
In this case $0\in\RRR^2$ is a degenerate global \myemph{extreme} of $g$;}
 \item[$\typeHamiltNonExtr$]
\myemph{$a\geq1$ and $a+2b\geq 3$, so $g$ has linear multiples.
In this case $0\in\RRR^2$ is a degenerate \myemph{saddle} critical point for $g$.}
\end{enumerate}

\begin{remark}\rm
The types $\typeLinear$, $\typeHamilt$, $\typeHamiltExtr$, and $\typeHamiltNonExtr$ coincide with the ones considered in~\cite{Maks:ImSh}.

Also notice that in the case $\typeLinRot$ almost orbits of $\AFld$ are periodic, while in the case $\typeLinNonRot$ almost orbits are non-periodic and recurrent.
\end{remark}

{\bf Regular extensions and products of vector fields.}
Let $\Mman,\Nman$ be manifolds, $\BFld:\Mman\to T\Mman$ be a vector field on $\Mman$, and $\AFld$ be a vector field on $\Mman\times\Nman$.
Then $\AFld$ can be regarded as a map
$\AFld: \Mman\times\Nman \longrightarrow T\Mman\times T\Nman$.
We say that $\AFld$ is a {\em regular extension of $\BFld$} if 
$$\AFld(x,y)=(\BFld(x),H(x,y)), \qquad (x,y)\in\Mman\times\Nman,$$
for some smooth map $H:\Mman\times\Nman\to T\Nman$, so $\BFld$ is the ``first'' coordinate of $\AFld$ and does not depend on $y$, see e.g.~\cite{Blackmore:JDE:1973,Venti:JDE:1966}.
If $H$ does not depend on $x$, i.e. is a vector field on $\Nman$, then 
$$\AFld(x,y)=(\BFld(x),H(y))$$
will be called the \myemph{product} of $\BFld$ and $H$.
Moreover, if $H\equiv0$, i.e.\! $$\AFld(x,y)=(\BFld(y),0),$$ then $\AFld$ is said to be a \myemph{trivial extension} of $\BFld$.

In the case $\Nman=\RRR^{n}$ we will also say that $\AFld$ is a (regular or trivial) \myemph{$n$-extension}.
Evidently, a regular (trivial) $m$-extension of a regular (trivial) $n$-extension is a regular (trivial) $(m+n)$-extension, see e.g.~\cite{Venti:JDE:1966,Maks:TA:2003}.
The following simple statement is left for the reader:
\begin{lemma}
{\rm(a)}~a trivial extension is the same as a product with zero vector field;

{\rm(b)}~a product of vector fields is a regular extension of each of them;

{\rm(c)}~every non-zero linear vector field is a regular extension of a linear vector field defined by one of the following matrices:
$\|a\|$, $\left\|\begin{smallmatrix} a & b \\ -b & a \end{smallmatrix}\right\|$, $(b\not=0)$, or $\left\|\begin{smallmatrix} 0 & 1 \\ 0 & 0 \end{smallmatrix}\right\|$.
\end{lemma}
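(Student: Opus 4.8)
The plan is to dispose of (a) and (b) by unwinding the definitions, and to treat (c) by a normal-form argument in linear algebra. For (a), I would simply observe that the product of $\BFld$ on $\Mman$ with the zero vector field on $\Nman$ is, by definition, the vector field $(x,y)\mapsto(\BFld(x),0)$ on $\Mman\times\Nman$, which is exactly what was called the trivial extension of $\BFld$; nothing is needed beyond matching the two definitions. For (b), given the product $\AFld(x,y)=(\BFld(x),\HFld(y))$, the second component $(x,y)\mapsto\HFld(y)$ is itself a smooth map $\Mman\times\Nman\to T\Nman$, so $\AFld$ is a regular extension of $\BFld$ directly from the definition. To see it is also a regular extension of $\HFld$, I would transport the situation by the coordinate-interchange diffeomorphism $\Nman\times\Mman\to\Mman\times\Nman$, $(y,x)\mapsto(x,y)$: the resulting vector field $(y,x)\mapsto(\HFld(y),\BFld(x))$ has $\HFld$ as its first coordinate and a smooth ``second coordinate'' $(y,x)\mapsto\BFld(x)$, hence is a regular extension of $\HFld$; since this coordinate change preserves orbit structure, the claim follows. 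The only thing to be careful about is the implicit identification of $\Mman\times\Nman$ with $\Nman\times\Mman$, which is harmless.

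The substance is in (c). Write $\AFld(x)=Ax$ with $A\neq0$. First I would pass to a linear change of coordinates bringing $A$ into real Jordan normal form; such a change conjugates $\AFld$ to a linearly equivalent vector field and carries a regular-extension decomposition to a regular-extension decomposition over the same model vector field, so nothing is lost. Thus $A$ becomes block-diagonal with real Jordan blocks $J_1,\dots,J_s$, each of which is a $1\times1$ block $\|\lambda\|$ with $\lambda\in\RRR$, a Jordan block of size $\geq2$ for a real eigenvalue with $1$'s on the superdiagonal, or a real Jordan block for a conjugate pair $a\pm bi$ ($b\neq0$) built from $2\times2$ blocks $\left\|\begin{smallmatrix}a & b\\ -b & a\end{smallmatrix}\right\|$ on the diagonal with $2\times2$ identity blocks above them. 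Because $A\neq0$, at least one block $J_{i_0}$ is not the $1\times1$ zero matrix, and I would fix such a block together with the (automatically $A$-invariant) span $\Nman$ of all the other blocks.

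The key observation — the heart of the argument — is that within the fixed block, in the ``superdiagonal'' convention, the span of all of its basis vectors except the last one (except the last two, for a complex block) is $J_{i_0}$-invariant, while the complementary $1$- (resp.\ $2$-) dimensional coordinate subspace carries the induced action $u\mapsto\lambda u$ (resp.\ multiplication by $\left\|\begin{smallmatrix}a & b\\ -b & a\end{smallmatrix}\right\|$). So I would split off that last coordinate, or last two: the complement inside all of $\RRR^{k}$ — the ``all-but-last'' subspace of $J_{i_0}$ together with $\Nman$ — is $A$-invariant, which is exactly the condition for $\AFld$ to be a regular extension of the vector field induced on the small subspace. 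If $J_{i_0}$ has a non-zero real eigenvalue we obtain the matrix $\|\lambda\|$ with $\lambda\neq0$; if $J_{i_0}$ is a complex block we obtain $\left\|\begin{smallmatrix}a & b\\ -b & a\end{smallmatrix}\right\|$ with $b\neq0$; and if $J_{i_0}$ is nilpotent, hence necessarily of size $\geq2$, then splitting off its last two coordinates and reindexing the two basis vectors yields $\left\|\begin{smallmatrix}0 & 1\\ 0 & 0\end{smallmatrix}\right\|$. These are precisely the three listed matrices, and only one such block is needed, so the proof is complete.

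I expect the only genuine obstacle in (c) to be the bookkeeping: one must resist restricting $A$ to a ``corner'' sub-block of a single Jordan block — the complement of such a corner is in general \emph{not} invariant — and instead peel off the \emph{terminal} sub-block, for which invariance of the complement is a short direct computation with the Jordan block; and in the eigenvalue-$0$ case one must peel off a two-dimensional piece precisely so as to land on $\left\|\begin{smallmatrix}0&1\\0&0\end{smallmatrix}\right\|$ rather than on the zero vector field. Everything else is routine.
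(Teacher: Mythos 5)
The paper explicitly leaves this lemma to the reader, so there is no in-paper proof to compare against; your argument is correct and fills in exactly what is being asked for. Parts (a) and (b) are, as you say, definitional (with the harmless identification of $\Mman\times\Nman$ and $\Nman\times\Mman$ in (b)), and in (c) you have identified the one genuine point of care correctly: after reduction to real Jordan form, the subspace that must be $A$-invariant is the span of the \emph{initial} basis vectors of the chosen block (together with all other blocks), so one peels off the \emph{terminal} $1$- or $2$-dimensional piece and reads off the induced quotient action, taking a $2\times 2$ piece in the nilpotent case so as to obtain $\left\|\begin{smallmatrix}0&1\\0&0\end{smallmatrix}\right\|$ rather than the zero map.
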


\begin{definition}
Let $z\in\AFld$ and $\mathrm{(T)}$ denotes one of the types $\typeZero$, $\typeLinear$, $\typeLinPH$, etc., defined above.
We will say the germ of $\AFld$ at $z$ \myemph{is of type $\mathrm{(T)}'$} if the germ of $\AFld$ at $z$ is equivalent to a regular extension of some vector field of type $\mathrm{(T)}$.
By $\fixtp{\mathrm{(T)}}$ and $\fixtpe{\mathrm{(T)}}$ we will denote the set of singular points of $\AFld$ or types $\mathrm{(T)}$ and $\mathrm{(T)}'$ respectively.
\end{definition}

Evidently, a singular point can belong to distinct types.

\begin{definition}\label{defn:classFF}
We will say that a $\Cont{\infty}$ vector field $\AFld$ on a $\Cont{\infty}$ manifold $\Mman$ \myemph{belongs to class $\FF(\Mman)$} if it satisfies the following conditions:
\begin{enumerate}
 \item[\rm(a)]
$\AFld$ is tangent to $\partial\Mman$ and $\FixA$ is nowhere dense in $\Mman$;
 \item[\rm(b)]
every non-periodic regular point of $\AFld$ is non-recurrent;
 \item[\rm(c)]
for every periodic point $z$ of $\AFld$ the germ at $z$ of its first recurrence map $R:(D,z)\to(D,z)$ is either periodic or the tangent map $T_{z}R:T_z D\to T_z D$ has eigen value $\lambda$ such that $|\lambda|\not=1$;
 \item[\rm(d)]
for every $z\in\FixA$ the germ of $\AFld$ at $z$ is either 
\begin{itemize}
 \item[$(*)$] 
a \myemph{product} of finitely many vector fields each of which is of type $\typeLinear$ or $\typeHamilt$, or
 \item[$(**)$]
belongs to one of the types $\typeLinPH'$, $\typeLinNilp'$, $\typeLinRotExt'$, or $\typeHamilt'$, but $z$ is an \myemph{isolated} singular point of $\AFld$.
\end{itemize}
\end{enumerate}
\end{definition}
Thus if the germ of $\AFld$ at $z$ is a regular extension of an $\typeLinRot$-vector field, then $\AFld$ must in fact be a product of such a vector field with vector fields of types $\typeLinear$ or $\typeHamilt$ only.
Also notice that singularities of type $\typeLinNonRot$ are not allowed at all since they have recurrent orbits.
Moreover, we will present examples of vector fields with singularities of type $\typeLinNonRot$ for which the statement of our main result fails, see the end of\;\S\ref{sect:examples-nontriv-hol}.

The following theorem summarizes the results obtained in~\cite{Maks:ImSh} and in this paper.

\begin{theorem}\label{th:main-result}{\rm c.f.~\cite[Th.~1]{Maks:TA:2003} and~\cite{Maks:ImSh}.}
Let $\AFld\in\FF(\Mman)$, $\ShA$ be the shift map of $\AFld$, and
$\Gamma^{+}=\{\afunc\in \funcA : d\afunc(\AFld)(x)>-1 \ \forall x\in\Mman \}$.
Then 
$$\imShA=\EidAFlow{1}, \qquad \ShA(\Gamma^{+})=\DidAFlow{1}.$$
If $\FixF=\FixLinE\cup\FixHamNonExtrE$, then  $\imShA=\EidAFlow{0}$ and $\ShA(\Gamma^{+})=\DidAFlow{0}$.

Suppose that $\overline{\FixF\setminus(\FixLinPHE\cup\FixLinNilpE)}$ is compact (this is always true for compact $\Mman$).
Then both maps 
$$
\ShA:\funcA \to \EidAFlow{1},
\qquad \qquad 
\ShA|_{\Gamma}:\Gamma\to \DidAFlow{1}
$$
are either homeomorphisms or $\ZZZ$-covering maps with respect to $\Sr{\infty}$ topologies.
If $\Mman$ is compact, then the inclusion $\DidAFlow{1}\subset\EidAFlow{1}$ is a homotopy equivalence and both spaces are either contractible or homotopy equivalent to the circle.
If $\AFld$ has at least one non-closed orbit, or a singular point at which the linear part (i.e. $1$-jet) of $\AFld$ vanishes, then $\DidAFlow{1}$ and $\EidAFlow{1}$ are contractible.
\end{theorem}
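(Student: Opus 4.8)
The plan is as follows. The three displayed \emph{equalities} $\imShA=\EidAFlow{1}$, $\ShA(\Gamma^{+})=\DidAFlow{1}$ (and, under the extra hypothesis $\FixF=\FixLinE\cup\FixHamNonExtrE$, the same statements with $0$ in place of $1$) are not proved in this paper; they follow from the inclusion $\FF(\Mman)\subset\FF'(\Mman)$ and the main theorem of~\cite{Maks:ImSh}, together with~\eqref{equ:GV} and the last assertion of Lemma~\ref{lm:inclusions}. Hence the real content is the description of the homotopy type, and, as pointed out in the introduction, this reduces to two properties of the global shift map $\ShA\colon\funcA\to\Ci{\Mman}{\Mman}$: it is locally injective, and it is $\contSS{\infty}{\infty}$-open. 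Local injectivity with respect to every topology $\Sr{r}$ is immediate from Lemma~\ref{lm:Shift-map-prop}(4), since $\FixA$ is nowhere dense by Definition~\ref{defn:classFF}(a). Therefore the whole argument hinges on openness.

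To prove that $\ShA$ is $\contSS{\infty}{\infty}$-open I would proceed along the chain built up in \S\ref{sect:openness_ShA}--\S\ref{sect:open_im_in_im_fixpt}. By Theorem~\ref{th:Sh-open-map} it suffices to choose a locally finite cover $\{\Vman_i\}$ of $\Mman$ by arbitrarily small closed $\Dm$-disks, to show that each local shift map $\ShAVi$ is $\contSS{\infty}{\infty}$-open, \emph{and} to verify that the associated index set $\Lambda'$ of that cover is finite. Openness of an individual $\ShAVi$ is, by Theorem~\ref{th:sh-open-12}, equivalent to $\Sr{\infty}$-continuity of its local inverse (a crossed homomorphism) on an arbitrarily small neighbourhood of $\id$; and by \S\ref{sect:repl_M_by_W} this in turn splits into (i)~openness of the shift map $\ShAWV$ of the restriction $\AFld|_{\Wman}$ to a small neighbourhood $\Wman$ of $\Vman_i$, a purely local question on $\RRR^{n}$ which is trivial at regular points and is settled in \S\ref{sect:LHVectFields} for the linear and reduced-Hamiltonian model singularities admitted by Definition~\ref{defn:classFF}(d); and (ii)~openness of the image $\imShAWV$ in $\imShAV$, a dynamical question settled at regular points by Theorem~\ref{th:ShAVopen_reg} (this is exactly where the non-recurrence hypothesis~(b) and the hypothesis~(c) on the first-recurrence map are used) and at the admissible singular points by Lemma~\ref{lm:prQ_singpt} via the isolating-block criterion of Lemma~\ref{lm:isol_block}.

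The step I expect to be the main obstacle is the finiteness of $\Lambda'$, that is, the passage from ``each $\ShAVi$ is $\contSS{\infty}{\infty}$-open'' to ``$\ShA$ is $\contSS{\infty}{\infty}$-open''. Near a singular point of type $\typeLinPH$ or $\typeLinNilp$ the local shift map is actually $\contSS{r}{d(r)}$-open with a fixed loss-of-derivatives function $d$, so such charts can safely be amalgamated over an infinite, merely locally finite, cover. For singular points of the remaining admissible types --- regular extensions of the rotational or reduced-Hamiltonian models --- one only has $\contSS{\infty}{\infty}$-openness with no uniform $d$, and amalgamating infinitely many of these would yield openness only into the very-strong topology, not into $\Sr{\infty}$. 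The set $\Lambda'$ is precisely an index set for charts surrounding such ``wild'' singularities, so the hypothesis that $\overline{\FixF\setminus(\FixLinPHE\cup\FixLinNilpE)}$ be compact forces $\Lambda'$ to be finite and hence $\ShA$ to be genuinely $\contSS{\infty}{\infty}$-open. This hypothesis is automatic for compact $\Mman$, where moreover $\Sr{r}=\Wr{r}$ for every $r$.

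Once $\ShA$ is known to be continuous for the $\Sr{\infty}$ topologies (Lemma~\ref{lm:Shift-map-prop}(1)), $\Sr{\infty}$-locally injective, and $\contSS{\infty}{\infty}$-open, it is a local homeomorphism of $\funcA$ onto $\imShA$; by Lemma~\ref{lm:Shift-map-prop}(3) its fibres are the cosets $\afunc+\ker(\ShA)$, and the discrete group $\ker(\ShA)\subset\Ci{\Mman}{\RRR}$ acts on the convex set $\funcA$ by translations, freely and properly discontinuously (on a neighbourhood on which $\ShA$ is injective the translates by distinct group elements are disjoint). Thus $\ShA\colon\funcA\to\imShA=\EidAFlow{1}$ is a regular covering with deck group $\ker(\ShA)$, and since $\Mman$ is connected Lemma~\ref{lm:Shift-map-prop}(5) shows $\ker(\ShA)$ is either $\{0\}$ or $\{n\nu\}_{n\in\ZZZ}$ with $\nu>0$. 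As $d\nu(\AFld)\equiv0$, translation by $\nu$ preserves $\Gamma^{+}$, so the same reasoning makes $\ShA|_{\Gamma^{+}}\colon\Gamma^{+}\to\DidAFlow{1}$ a regular covering with the identical deck group; hence both maps are homeomorphisms when $\ker(\ShA)=\{0\}$ and $\ZZZ$-covering maps otherwise. Finally $\funcA$ and $\Gamma^{+}$ are convex, hence contractible, so: if $\ker(\ShA)=\{0\}$ then $\EidAFlow{1}\cong\funcA$ and $\DidAFlow{1}\cong\Gamma^{+}$ are contractible --- which in particular proves the last sentence of the theorem, because by Lemma~\ref{lm:Shift-map-prop}(5)(a) a non-closed orbit supplies a non-periodic point and a vanishing $1$-jet at $z\in\FixA$ gives $T_{z}\AFlow_{t}=\exp\!\bigl(tD\AFld(z)\bigr)=\id$; and if $\ker(\ShA)=\{n\nu\}_{n\in\ZZZ}$ and $\Mman$ is compact, then $\funcA$ and $\Gamma^{+}$, being contractible with free properly discontinuous $\ZZZ$-actions, have quotients $\EidAFlow{1}$ and $\DidAFlow{1}$ that are aspherical with $\pi_{1}\cong\ZZZ$ and --- having the homotopy type of CW complexes --- are homotopy equivalent to $S^{1}$, while the inclusion $\DidAFlow{1}\subset\EidAFlow{1}$ carries the generator $[\,t\mapsto\afunc_{0}+t\nu\,]$ of $\pi_{1}(\DidAFlow{1})$, a loop lying in the convex set $\Gamma^{+}$, to a generator of $\pi_{1}(\EidAFlow{1})$, hence induces an isomorphism on $\pi_{1}$ and therefore, between aspherical spaces of CW type, is a homotopy equivalence.
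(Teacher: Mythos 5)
Your proposal is correct and follows the same architecture as the paper's proof: invoke \cite{Maks:ImSh} for the image equalities, verify local $\contWW{\infty}{\infty}$-openness (and the quantitative $\contWW{r}{r+2}$-openness at $\typeLinPH/\typeLinNilp$ points) via Theorem~\ref{th:charact_sh_open} together with \S\ref{sect:LHVectFields}, Theorem~\ref{th:ShAVopen_reg}, Lemma~\ref{lm:prQ_singpt} and Corollaries~\ref{cor:FF_imShAWV_open}, \ref{cor:prQ_for_isolated_pt}, then amalgamate with the fragmentation Theorem~\ref{th:Sh-open-map}, using the compactness hypothesis exactly as you say to make $\Lambda'$ finite. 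The only difference is that the paper dispatches the covering-space/asphericity conclusion by citing \cite[\S9]{Maks:TA:2003}, whereas you spell it out explicitly; that part of your argument is also sound.
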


The first statement about the image of shift maps is established in~\cite{Maks:ImSh} under more general assumptions on $\AFld$.
Therefore we will be proving the second part of Theorem~\ref{th:main-result}, see \S\ref{sect:proof_th:main-result}.
Notice that in comparison with~\cite[Th.~1]{Maks:TA:2003} two additional assumptions are added: ``non-ergodicity'' condition (like absence of recurrent orbits) and compactness of certain subset of singular points.
On the other hand the classes of admissible singularities for $\AFld$ are extended due to results of\;\cite{Maks:hamv2, Maks:CEJM:2009, Maks:reparam-sh-map}.

\subsection{Functions on surfaces.}\label{sect:func-on-surf}
As an application of Theorem~\ref{th:main-result} we will now show that~\cite[Th.~1.3]{Maks:AGAG:2006} which used incorrect results of~\cite{Maks:TA:2003} remains true.
Moreover, we extend the latter theorem to a large class of functions on surfaces with degenerate homogeneous singularities satisfying certain ``secondary'' non-degeneracy conditions.

Let $M$ be a compact surface, $P$ be either the real line $\RRR$ or the circle $S^1$.
For a smooth map $f:M\to P$ denote by $\Sigma_{f}$ the set of its critical points.
Let also $\Stabf=\{h\in \DiffM : f\circ h=f \}$ be the stabilizer of $f$ with respect to the action of $\DiffM$ on $\Ci{M}{P}$ and be the identity component of $\Stabf$ with respect to the $\Wr{\infty}$ topology.

\begin{theorem}\label{th:Stabf}{\rm c.f. \cite[Th.~1.3]{Maks:AGAG:2006}.}
Let $f\in\Ci{M}{P}$.
Suppose that 

{\rm(i)}~$f$ takes constant values on connected components of $\partial M$ and has no critical points on $\partial M$; 

{\rm(ii)}~for every $z\in\Sigma_{f}$ the map $f$ is $\Cont{\infty}$ equivalent near $z$ to a homogeneous polynomial $g_{z}$ \myemph{without multiple factors} such that $\deg g_z\geq2$.

If $\Mman$ is orientable and $\Sigma_{f}$ consists of non-degenerate local extremes only, i.e.\! $f$ is a Morse map without critical points of index $1$, then $\StabIdf$ is homotopy equivalent to the circle.
In all other cases $\StabIdf$ is contractible as well.
\end{theorem}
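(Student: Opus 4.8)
The plan is to reduce the statement to Theorem~\ref{th:main-result} by associating to $f$ a vector field $\AFld$ whose orbit-preserving diffeomorphism group is closely tied to $\Stabf$. First I would recall the standard construction (as in~\cite{Maks:AGAG:2006}): choosing a Riemannian metric on $M$ and a metric on $P$, one obtains the (negative) gradient-related field, but the better choice here is the \emph{Hamiltonian-type} vector field $\AFld$ of $f$, i.e. the field whose orbits are the connected components of level sets $f^{-1}(c)$ with the critical points of $f$ as singular points. Near each $z\in\Sigma_f$, condition (ii) says $f$ is $\Cont\infty$-equivalent to a homogeneous polynomial $g_z$ without multiple factors of degree $\geq2$; hence the local model of $\AFld$ at $z$ is exactly the \emph{Hamiltonian} vector field of $g_z$ (here $D\equiv1$ since there are no multiple factors), which is of type $\typeHamilt$ — either $\typeHamiltExtr$ (a degenerate extremum, when $g_z$ is a product of definite quadratic forms) or $\typeHamiltNonExtr$ (a degenerate saddle), or the non-degenerate linear case of type $\typeLinear$. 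In particular every singular point of $\AFld$ is isolated and of one of the admissible types in Definition~\ref{defn:classFF}(d)$(**)$, or is a product of such; condition~(i) guarantees $\AFld$ is tangent to $\partial M$ with no singularities on the boundary.

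Next I would verify that $\AFld\in\FF(M)$. Condition (a) is immediate: $\FixA=\Sigma_f$ is finite (on a compact surface a function with only isolated homogeneous singularities has finitely many critical points), hence nowhere dense. Conditions (b) and (c) — absence of recurrent non-periodic regular orbits and the eigenvalue condition on first-return maps — hold because for a Hamiltonian flow on a surface every regular orbit lies in a level set of $f$, so it is either a closed curve (periodic) or a non-closed level component which is non-recurrent (its closure limits onto critical points), and around a periodic orbit bounding a region of regular level curves the first-return map is the identity (period function constant along the annulus), so it is periodic in the sense of (c). Since $M$ is compact, the compactness hypothesis $\overline{\FixF\setminus(\FixLinPHE\cup\FixLinNilpE)}$ in Theorem~\ref{th:main-result} is automatic. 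Applying Theorem~\ref{th:main-result}, $\DidAFlow{1}$ is either contractible or homotopy equivalent to $S^1$, and it is contractible precisely when $\AFld$ has a non-closed orbit or a singular point with vanishing linear part (i.e. a degenerate critical point of $f$).

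Then I must translate this into a statement about $\StabIdf$. The key point, which I would import from~\cite{Maks:AGAG:2006} (and is the technical heart), is that for $f$ satisfying (i)–(ii) the natural inclusion induces a homotopy equivalence (or at least an isomorphism of the relevant $\pi_0$ and homotopy type) between $\StabIdf$ and $\DidAFlow{\infty}=\DidAFlow{1}$: a diffeomorphism preserving $f$ necessarily preserves each level-set component, hence preserves orbits of $\AFld$, giving $\StabIdf\subset\DidAFlow{1}$, and conversely one shows any orbit-preserving diffeomorphism isotopic to the identity can be deformed inside $\DAFlow$ to one that genuinely fixes $f$, by composing with shifts along orbits to kill the reparametrization of each level curve; the shift-map description $\imShA=\EidAFlow{1}$ from Theorem~\ref{th:main-result} is what makes this deformation work. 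Finally I would match up the dichotomy: $\StabIdf\simeq S^1$ exactly when $\DidAFlow{1}\simeq S^1$, which by Theorem~\ref{th:main-result} happens iff all orbits are closed and all singularities have non-vanishing linear part — equivalently (on an orientable surface) $f$ is Morse with only non-degenerate extrema and no index-$1$ critical points; in every other case ($M$ non-orientable, or $f$ has a saddle, or $f$ has a degenerate singularity) $\DidAFlow{1}$ and hence $\StabIdf$ is contractible.

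The main obstacle I expect is the last step — rigorously identifying $\StabIdf$ with $\DidAFlow{1}$ as topological spaces (not merely up to homotopy), including the boundary behavior forced by~(i) and the passage between $\Wr{\infty}$ and the strong topologies used in Theorem~\ref{th:main-result}; on a compact manifold weak and strong topologies agree, which helps, but one still has to argue carefully that the deformation of an $f$-preserving diffeomorphism to the identity can be realized \emph{within} $\Stabf$ rather than only within $\DAFlow$, and that no extra connected components appear. A secondary subtlety is checking that the orientability hypothesis is used only to rule out the reflection that would otherwise obstruct the $S^1$-case, so that in the non-orientable Morse-extremal situation one still lands in the contractible case.
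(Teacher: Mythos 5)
Your proposal follows essentially the same route as the paper: construct the Hamiltonian-type field $\AFld$ with $df(\AFld)=0$ and $\FixA=\Sigma_f$, import the identification $\StabIdf=\DidAFlow{\infty}$ from~\cite[Lm.~3.5]{Maks:AGAG:2006}, verify $\AFld\in\FF(M)$ (with the same three checks: $\Sigma_f$ finite, regular orbits non-recurrent, first-return maps the identity), apply Theorem~\ref{th:main-result}, and match the $S^1$ vs.\ contractible dichotomy via the presence of a non-closed orbit or a singularity with vanishing $1$-jet. The only cosmetic difference is that the paper reduces to the orientable case at the outset by citing~\cite[\S4.7]{Maks:AGAG:2006}, whereas you fold that into the final dichotomy discussion; the substance is identical.
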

\begin{proof}[Sketch of proof.]
It suffices to assume that $M$ is orientable.
A non-orientable case will follow from orientable one by arguments of~\cite[\S4.7]{Maks:AGAG:2006}.

Similarly to~\cite[Lm.~5.1]{Maks:AGAG:2006} using (i) and (ii) it is possible to construct a vector field $\AFld$ on $M$ with the following properties:
\begin{enumerate}
\item[\rm(A)] $df(\AFld)=0$, in particular $\AFld$ is tangent to $\partial M$;
\item[\rm(B)] $\AFld(z)=0$ iff $z\in M$ is a critical point of $\AFld$, i.e. $\FixA=\Sigma_{f}$;
\item[\rm(C)] for every critical point $z\in\Sigma_{f}$ there exists a local presentation $f:\RRR^2\to\RRR$ of $f$ in which $z=0$, $f$ is a homogeneous polynomial of degree $\geq2$ and without multiple factors, and $\AFld(x,y)=(-f'_{y},f'_x)$ is a Hamiltonian vector field of $f$.
\end{enumerate}
Then it follows from (A), (B), and arguments of \cite[Lm.~3.5]{Maks:AGAG:2006} that $\StabIdf=\DidAFlow{\infty}$.

Notice that $\AFld$ belongs to class $\FF(M)$. 
Indeed, conditions (a) and (b) are evident. 
Moreover, for each periodic point of $\AFld$ its first recurrent map is the identity, which implies (c).

Finally by (C) each non-degenerate saddle of $f$ is of type $\typeLinPH$, 
each non-degenerate local extreme of $f$ is of type $\typeLinRot$, while all degenerate critical points of $f$ are of types $\typeHamilt$.
This implies (d).

Hence $\StabIdf=\DidAFlow{\infty}$ is either contractible or homotopy equivalent to $S^1$.
If $\Mman$ is orientable and $\Sigma_{f}$ consists of non-degenerate local extremes only, then $f$ is Morse and belongs to one of the types (A)-(D) of\;\cite[Th.\;1.9]{Maks:AGAG:2006}.
In this case the corresponding shift map is not injective, whence $\StabIdf$ is homotopy equivalent to $S^1$.

In all other cases $\ShA$ is injective and so $\StabIdf$ is contractible.
Indeed, if $f$ has a saddle critical point then $\AFld$ has a on-closed orbit, while if $f$ has a degenerate local extreme $z$ then $j^1\AFld(z)=0$.
\end{proof}

\section{Openness of $\ShA$}\label{sect:openness_ShA}
Let $\Vman\subset\Mman$ be a $\Dm$-submanifold.
Our aim is to find sufficient conditions for the shift map $\ShAV:\funcAV\to\imShAV$ to be a local homeomorphism with respect to $\Sr{\infty}$ topologies.
Since $\ShAV$ is $\contSS{r}{r}$-continuous for all $r\in\Nzi$, this is equivalent to $\contSS{\infty}{\infty}$-openness of $\ShAV$.
Moreover, as $\ShAV$ is locally injective, we can also require that its local inverses are $\contSS{\infty}{\infty}$-continuous and defined on $\Sr{\infty}$-open subsets of $\imShAV$.

In this section we show that $\contSS{r}{s}$-openness of $\ShAV$ for some $r,s\in\Nzi$ is equivalent to $\contSS{s}{r}$-continuity of its local inverse defined \myemph{only} on some neighbourhood of the identity inclusion $i_{\Vman}:\Vman\subset\Mman$ in $\imShAV$.
\begin{definition}
Let $A$ be a group and $S$ be a semigroup with unit $e$.
Then the \myemph{right action} of $S$ on $A$ is a map $*:A\times S \to A$ such that $\afunc * e = \afunc$ and $\afunc *(\amap\,\bmap)=(\afunc*\amap)*\bmap$ for all $\afunc\in A$ and $\amap,\bmap\in S$.
A map $\SectShA:S\to A$ is called a \myemph{crossed homomorphism} if
$$
\SectShA(\amap\,\bmap) = ( \SectShA(\amap)*\bmap )\, \SectShA(\bmap).
$$
\end{definition}
Suppose for the moment that $\AFld$ generates a global flow $\AFlow$.
Put $S=\imShA$ and $A=\funcA=\Ci{\Mman}{\RRR}$.
Then $A$ is an abelian group and by~\cite[Eq~(8)]{Maks:TA:2003} $S$ is a subsemigroup of $\Ci{\Mman}{\Mman}$ acting from the right on $A$ as follows:
$$
\afunc\circ s:\Mman \xrightarrow{s} \Mman \xrightarrow{\afunc} \RRR,
\qquad \afunc\in A, \, s\in S.
$$
Suppose also that the shift map $\ShA$ of $\AFlow$ is injective, i.e., $\ShA:\Ci{\Mman}{\RRR}\to\imShA$ is a bijection.
Then it follows from~\cite[Eq~(8)]{Maks:TA:2003} that the inverse map 
$$
\SectShA=\ShA^{-1}:\imShA\to\Ci{\Mman}{\RRR}
$$
is a crossed homomorphism, i.e.
$$
\SectShA(\amap\circ\bmap) = \SectShA(\amap)\circ\bmap + \SectShA(\bmap), \qquad \forall \amap,\bmap\in S.
$$
If $\ShA$ is not injective, then the local inverse of $\ShA$ near $\id_{\Mman}$ is a ``\myemph{local crossed homomorphism}''.
Moreover, if $\AFlow$ is not a global flow, then $\funcA$ is an open subset of $\Ci{\Mman}{\RRR}$ containing the zero function, i.e. a ``\myemph{local group}''.

Notice that if $\imShA$ were a group and $\ShA^{-1}$ were a homomorphism, then continuity of $\ShA^{-1}$ would be equivalent to its continuity at the unit element $\id_{\Mman}$ only, which of course is a simpler problem.
In our case $\imShA$ is just a semigroup, $\ShA^{-1}$ is a local crossed homomorphism, and $\funcAV$ is in general a local group.
Nevertheless we will now show that continuity of local inverses of $\ShA$ is equivalent to continuity of the local inverse of $\ShA$ at $i_{\Vman}:\Vman\subset\Mman$.
\begin{theorem}\label{th:sh-open-12}
Let $\Vman\subset\Mman$ be a $\Dm$-submanifold and $r,s\in\Nzi$.
Then the following conditions are equivalent:
\begin{enumerate}
 \item[\rm(1)]
The shift map $\ShAV:\funcAV\to\imShAV$ is $\contSS{r}{s}$-open;
 \item[\rm(2)]
For every $\afunc\in\funcAV$ there exists an $\Sr{s}$-neighbourhood $\Nbh_{\amap}$ of $\amap=\ShAV(\afunc)$ in $\imShAV$ and an $\contSS{s}{r}$-continuous section of $\ShAV$, i.e. a map
$$
\SectShA:\Nbh_{\amap} \; \longrightarrow \; \Ci{\Vman}{\RRR},
$$
such that $\SectShA(\amap) = \afunc$ and $\ShAV \circ \SectShA = \id(\Nbh_{\amap})$.
In other words
$$
\amap(x) = \AFlow(x,\SectShA(\amap)(x)),
$$
for all $\bmap\in \Nbh_{\amap}$ and $x\in\Vman$.
 \item[\rm(3)]
Property {\rm(2)} holds for the zero function $\afunc=\zer:\Vman\to\RRR$ and the identity inclusion $\amap=\ShAV(\zer)=i_{\Vman}:\Vman\subset\Mman$.
\end{enumerate}
\end{theorem}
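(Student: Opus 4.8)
The plan is to prove $(3)\Rightarrow(2)\Rightarrow(1)\Rightarrow(3)$, observing first that $(2)\Rightarrow(3)$ is nothing but the specialization of (2) to $\afunc=\zer$ (note $\ShAV(\zer)=i_{\Vman}$). Implication $(2)\Rightarrow(1)$ is formal and uses no hypothesis on $\AFld$: given an $\Sr{r}$-open $\Uman\subset\funcAV$ and $\afunc\in\Uman$, choose by (2) a section $\SectShA$ on an $\Sr{s}$-neighbourhood $\Nbh_{\amap}$ of $\amap=\ShAV(\afunc)$ with $\SectShA(\amap)=\afunc$; then $\SectShA^{-1}(\Uman)$ is $\Sr{s}$-open and, since $\ShAV\circ\SectShA=\id(\Nbh_{\amap})$, equals $\ShAV\bigl(\SectShA(\SectShA^{-1}(\Uman))\bigr)\subset\ShAV(\Uman)$, so $\ShAV(\Uman)$ is an $\Sr{s}$-neighbourhood of each of its points. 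For $(1)\Rightarrow(3)$ I would use that the fibres of $\ShAV$ are cosets of the kernel $\ZidAV$, which by Lemma~\ref{lm:Shift-map-prop}(3)--(5) is discrete precisely when $\FixA\cap\Vman$ is nowhere dense in $\Vman$ --- the only case in which $\ShAV$ is locally injective and hence in which (1) is non-vacuous (the remaining case being reduced to the quotient $\Ci{\Vman}{\RRR}/\ZidAV$, on which (1) makes $\ShAV$ a homeomorphism). So there is an $\Sr{r}$-open neighbourhood $\Uman_{0}$ of $\zer$ on which $\ShAV$ is injective; by (1) the set $\ShAV(\Uman_{0})$ is an $\Sr{s}$-open neighbourhood of $i_{\Vman}$, and again by (1) the inverse bijection $\SectShA_{0}:\ShAV(\Uman_{0})\to\Uman_{0}$ is $\contSS{s}{r}$-continuous, with $\SectShA_{0}(i_{\Vman})=\zer$ and $\ShAV\circ\SectShA_{0}=\id$; this is (3).

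The core of the theorem is $(3)\Rightarrow(2)$, and the idea is that near an arbitrary $\afunc$ the shift map is conjugate, via honest homeomorphisms, to the shift map near $\zer$. For $\afunc\in\funcAV$ put $\amap=\ShAV(\afunc)$ and, on the $\Sr{0}$-open set of maps $\cmap:\Vman\to\Mman$ with $(\cmap(x),\afunc(x))\in\domA$ for every $x\in\Vman$, define the ``pointwise flow by $\afunc$''
$$\Theta_{\afunc}(\cmap)(x)=\AFlow\bigl(\cmap(x),\afunc(x)\bigr).$$
Then $\Theta_{\afunc}$ is $\contSS{k}{k}$-continuous for every order $k$ (continuity of composition with the fixed smooth data $\AFlow$ and $\afunc$), its local inverse is $\Theta_{-\afunc}$, one has $\Theta_{\afunc}(i_{\Vman})=\amap$ and $\Theta_{-\afunc}(\amap)=i_{\Vman}$, and the flow law $\AFlow(x,\gamma(x)+\afunc(x))=\AFlow(\AFlow(x,\gamma(x)),\afunc(x))$ yields
$$\ShAV(\gamma+\afunc)=\Theta_{\afunc}\bigl(\ShAV(\gamma)\bigr)$$
for every $\gamma$ in a small enough $\Sr{0}$-neighbourhood of $\zer$. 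Transporting the section $\SectShA_{0}$ of (3) through the source-homeomorphism $\gamma\mapsto\gamma+\afunc$ and the target-homeomorphism $\Theta_{\afunc}$, I would put, for $\bmap\in\Theta_{-\afunc}^{-1}(\Nbh_{0})$,
$$\SectShA_{\afunc}(\bmap)=\afunc+\SectShA_{0}\bigl(\Theta_{-\afunc}(\bmap)\bigr).$$
Then $\SectShA_{\afunc}(\amap)=\afunc+\SectShA_{0}(i_{\Vman})=\afunc$; the displayed identity together with $\ShAV\circ\SectShA_{0}=\id$ and $\Theta_{\afunc}\circ\Theta_{-\afunc}=\id$ gives $\ShAV\circ\SectShA_{\afunc}=\id$; and $\SectShA_{\afunc}$ is $\contSS{s}{r}$-continuous as the composite of the $\contSS{s}{s}$-continuous $\Theta_{-\afunc}$, the $\contSS{s}{r}$-continuous $\SectShA_{0}$, and the affine, hence $\contSS{r}{r}$-continuous, translation $\gamma\mapsto\gamma+\afunc$. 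That is property (2) at $\afunc$.

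I expect no conceptual obstacle beyond spotting the conjugation; the actual work lies in the bookkeeping of \emph{domains}. One must verify that $\gamma+\afunc$ and the intermediate composites stay inside $\funcAV$ and over $\domA$, that $\Theta_{\pm\afunc}$ are defined and mutually inverse on honest $\Sr{0}$-open neighbourhoods of the relevant maps, and that $\Theta_{-\afunc}$ carries a small enough neighbourhood of $\amap$ in $\imShAV$ into the domain $\Nbh_{0}$ of $\SectShA_{0}$; these follow from the $\Sr{0}$-openness and convexity of $\funcAV$, from $\SectShA_{0}$ being continuous with $\SectShA_{0}(i_{\Vman})=\zer$, and from the identity $\Theta_{-\afunc}(\ShAV(\bfunc))=\ShAV(\bfunc-\afunc)$, which keeps everything inside $\imShAV$. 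Finally, when $r$ or $s$ equals $\infty$ all statements are to be read in the ``for every order there is an order'' sense of Definition~\ref{defn:cont}; since each step above is a bona fide $\contSS{k}{k}$- or $\contSS{s}{r}$-statement for fixed finite orders, they compose correctly in that sense as well.
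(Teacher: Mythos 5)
Your proof is correct and rests on the same key device as the paper's: your $\Theta_{\afunc}$ is precisely the paper's $q_{\afunc}$ of Lemma~\ref{lm:q_bfunc}, and the identity $\ShAV(\gamma+\afunc)=\Theta_{\afunc}(\ShAV(\gamma))$ is exactly what drives the paper's diagram~\eqref{equ:CD-ShAv_a}. You merely run the cycle as $(3)\Rightarrow(2)\Rightarrow(1)\Rightarrow(3)$ rather than the paper's $(1)\Rightarrow(2)\Rightarrow(3)\Rightarrow(1)$: your $(1)\Rightarrow(3)$ mirrors the paper's $(1)\Rightarrow(2)$ (both extract a local inverse from openness plus local injectivity), and your $(3)\Rightarrow(2)$ followed by the formal $(2)\Rightarrow(1)$ amounts to the paper's $(3)\Rightarrow(1)$, where the translated neighbourhood $\Mbh_{\afunc}$ and Lemma~\ref{lm:Sh_Mpr_open} play the role of your transported section $\SectShA_{\afunc}$.

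One caveat: the parenthetical you insert in $(1)\Rightarrow(3)$ --- that when $\FixA\cap\Vman$ fails to be nowhere dense, ``(1) is non-vacuous'' only in the quotient $\Ci{\Vman}{\RRR}/\ZidAV$ where $(1)$ ``makes $\ShAV$ a homeomorphism'' --- is not sound and should be dropped. Openness of a map is never vacuous for lack of injectivity; $\ZidAV$ need not act on $\funcAV$ since $\funcAV$ is not a group; and no such quotient reduction is established anywhere. The honest statement is that your $(1)\Rightarrow(3)$ (like the paper's $(1)\Rightarrow(2)$) tacitly uses local injectivity of $\ShAV$, i.e.\ Lemma~\ref{lm:Shift-map-prop}(4), which requires $\FixA\cap\Vman$ to be nowhere dense in $\Vman$; this hypothesis is in force in every application the paper makes of the theorem, and is also silently used in the paper's own proof, so your argument matches the paper's once the parenthetical is removed.
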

\proof
(1)$\Rightarrow$(2).
Suppose that $\ShAV:\funcAV\to\imShAV$ is $\contSS{r}{s}$-open.
Since $\ShAV$ is locally injective with respect to the $\Sr{0}$ topology, condition (1) means that for every $\afunc\in\funcAV$ there exists an $\Sr{r}$-open neighbourhood $\Mbh_{\afunc}$ in $\funcAV$ such that 
\begin{enumerate}
\item[\rm a)]
the restriction $\ShAV|_{\Mbh_{\afunc}}:\Mbh_{\afunc}\to \ShAV(\Mbh_{\afunc})$ is a bijection,
\item[\rm b)]
$\ShAV(\Mbh_{\afunc})$ is $\Sr{s}$-open in $\imShAV$, so $\ShAV(\Mbh_{\afunc})\!\!=\!\imShAV\!\cap\!\Nbh_{\amap}$, for some $\Sr{s}$-open $\Nbh_{\amap}$ neighbourhood of $\amap=\ShAV(\afunc)$ in $\Ci{\Vman}{\Mman}$,
\item[\rm c)]
the inverse map $\SectShAV=\ShAV^{-1}:\ShAV(\Mbh_{\afunc})\to\Mbh_{\afunc}$ is $\contSS{s}{r}$-continuous.
\end{enumerate}
Then $\Nbh_{\amap}\defeq\ShAV(\Mbh_{\afunc})=\imShAV\cap\Nbh_{\amap}$ and $\SectShAV$ satisfy condition (2).

The implication (2)$\Rightarrow$(3) is evident.

(3)$\Rightarrow$(1).
It suffices to show that for every $\afunc\in\funcAV$ there exists an $\Sr{r}$-open neighbourhood $\Mbh_{\afunc}$ in $\funcAV$ satisfying conditions a)-c) above.
Condition (3) means that such a neighbourhood $\Mbh_{0}$ exists for the zero function $\zer$.

For every $\afunc\in \Ci{\Vman}{\RRR}$ define the following subset of $\Ci{\Vman}{\Mman}$
$$\Unbh_{\afunc} = \{ \amap\in \Ci{\Vman}{\Mman} \ : \ (\amap(x),\afunc(x))\in\domA, \forall x\in \Vman\}$$
and consider a map $q_{\afunc} : \Unbh_{\afunc} \to \Ci{\Vman}{\Mman}$ defined by
$$
q_{\afunc}(\amap)(x) = \AFlow(\amap(x),\afunc(x)), \qquad \amap\in \Unbh_{\afunc}, \ x\in\Vman.
$$
It is easy to see that $\Unbh_{\afunc}$ is $\Sr{0}$-open in $\Ci{\Vman}{\Mman}$.
Also notice that, $q_{\afunc}$ is $\contSS{r}{r}$-continuous for all $r\in\Nzi$ and that $i_{\Vman}\in\Unbh_{\afunc}$ if and only if $\afunc\in\funcAV$.

\begin{lemma}\label{lm:q_bfunc}
The image of $q_{\afunc}$ coincides with $\Unbh_{-\afunc}$ and $q_{-\afunc}$ is its inverse, i.e., $q_{-\afunc}=q_{\afunc}^{-1}:\Unbh_{-\afunc}\to\Unbh_{\afunc}$.
Thus $q_{\afunc}$ is an $\contSS{r}{r}$-homeomorphism $(\forall r\in\Nzi)$ between the $\Sr{0}$-open sets $\Unbh_{\afunc}$ and $\Unbh_{-\afunc}$.
\end{lemma}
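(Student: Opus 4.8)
The argument is pointwise, its single nontrivial ingredient being the reversibility of maximal orbits of the local flow $\AFlow$. The plan is first to record the standard fact about maximal orbits: if $(x,t)\in\domA$ and $y=\AFlow(x,t)$, then $(a_{y},b_{y})=(a_{x}-t,\,b_{x}-t)$ and $\AFlow(y,s)=\AFlow(x,t+s)$ for every $s\in(a_{y},b_{y})$; in particular $-t\in(a_{y},b_{y})$, so $(y,-t)\in\domA$ and $\AFlow(y,-t)=\AFlow(x,0)=x$. This is the one point requiring care; everything that follows is bookkeeping with the sets $\Unbh_{\afunc}$.

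Next I would establish simultaneously that $q_{\afunc}(\Unbh_{\afunc})\subseteq\Unbh_{-\afunc}$ and that $q_{-\afunc}\circ q_{\afunc}=\id_{\Unbh_{\afunc}}$. Let $\amap\in\Unbh_{\afunc}$, so $(\amap(x),\afunc(x))\in\domA$ for every $x\in\Vman$. Applying the fact above with $(x,t)$ replaced by $(\amap(x),\afunc(x))$ shows that $\bigl(q_{\afunc}(\amap)(x),\,-\afunc(x)\bigr)=\bigl(\AFlow(\amap(x),\afunc(x)),\,-\afunc(x)\bigr)\in\domA$ for every $x\in\Vman$; since $q_{\afunc}(\amap)$ is already known to be smooth, this gives $q_{\afunc}(\amap)\in\Unbh_{-\afunc}$. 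The same fact also gives $q_{-\afunc}\bigl(q_{\afunc}(\amap)\bigr)(x)=\AFlow\bigl(\AFlow(\amap(x),\afunc(x)),\,-\afunc(x)\bigr)=\amap(x)$ for every $x\in\Vman$, i.e. $q_{-\afunc}\circ q_{\afunc}=\id_{\Unbh_{\afunc}}$.

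Applying these two conclusions with $\afunc$ replaced by $-\afunc$ (using $-(-\afunc)=\afunc$) yields $q_{-\afunc}(\Unbh_{-\afunc})\subseteq\Unbh_{\afunc}$ and $q_{\afunc}\circ q_{-\afunc}=\id_{\Unbh_{-\afunc}}$. Hence $q_{\afunc}:\Unbh_{\afunc}\to\Unbh_{-\afunc}$ and $q_{-\afunc}:\Unbh_{-\afunc}\to\Unbh_{\afunc}$ are mutually inverse bijections, and in particular the image of $q_{\afunc}$ is exactly $\Unbh_{-\afunc}$. Since $q_{\afunc}$ and $q_{-\afunc}$ have already been noted to be $\contSS{r}{r}$-continuous for all $r\in\Nzi$, and $\Unbh_{\afunc},\Unbh_{-\afunc}$ are $\Sr{0}$-open in $\Ci{\Vman}{\Mman}$, this mutually inverse pair exhibits $q_{\afunc}$ as a $\contSS{r}{r}$-homeomorphism for every $r\in\Nzi$. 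The only place the argument could break is the domain bookkeeping --- ensuring every flow evaluation lies over $\domA$ --- and the reversibility of maximal orbits supplies precisely this, so the lemma carries no genuinely hard step.
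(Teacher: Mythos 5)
Your proof is correct and follows essentially the same route as the paper's: establish $q_{\afunc}(\Unbh_{\afunc})\subset\Unbh_{-\afunc}$ and $q_{-\afunc}\circ q_{\afunc}=\id_{\Unbh_{\afunc}}$, then interchange $\afunc$ and $-\afunc$. The only difference is that you make explicit the reversibility fact about maximal orbits of a local flow, which the paper uses silently in asserting $\bigl(\AFlow(\amap(x),\afunc(x)),-\afunc(x)\bigr)\in\domA$.
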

\begin{proof}
Let $\amap\in\Unbh_{\afunc}$, i.e. $(\amap(x),\afunc(x))\in\domA$ for all $x\in\Vman$.
Then 
$$\bigl(q_{\afunc}(\amap)(x), -\afunc(x)\bigr) \;=\; \bigl(\AFlow(\amap(x),\afunc(x)), -\afunc(x)\bigr)\;\in\;\domA,$$
$$q_{-\afunc}\circ q_{\afunc}(\amap)(x) = \AFlow\bigl(\AFlow\bigl(\amap(x),\afunc(x)\bigl),-\afunc(x)\bigl) =
\AFlow\bigl(\amap(x),\afunc(x)-\afunc(x)\bigl)=\amap(x).$$
Hence $q_{\afunc}(\Unbh_{\afunc}) \subset \Unbh_{-\afunc}$ and $q_{-\afunc}\circ q_{\afunc}=\id_{\Unbh_{\afunc}}$.
Interchanging $\afunc$ and $-\afunc$ will give the result.
\end{proof}

Now we turn to the proof of Theorem~\ref{th:sh-open-12}.
For every $\afunc\in\funcAV$ set
$$
\Mbh_{\afunc} \defeq \bigl(\Mbh_{0} \cap \ShAV^{-1}(\Unbh_{\afunc})  \; + \; \afunc \bigr) \; \cap \; \funcAV.
$$
Then $\Mbh_{\afunc}$ is $\Sr{r}$-open in $\funcAV$.

Moreover $\afunc\in\Mbh_{\afunc}$.
Indeed, since $\afunc\in\funcAV$, we have that $i_{\Vman}\in\Unbh_{\afunc}$, whence $\zer\in\ShAV^{-1}(i_{\Vman}) \subset \ShAV^{-1}(\Unbh)$ and thus $\afunc\in\Mbh_{\afunc}$.

We will show that $\Mbh_{\afunc}$ satisfies the conditions a)-c) above.

a) The restriction of $\ShAV$ to $\Mbh_{\afunc}$ is 1-1, since so is $\ShAV|_{\Mbh_{0}}$.

Conditions b) and c) are implied by the following lemma:
\begin{lemma}\label{lm:Sh_Mpr_open}
Let $\Mbh'_{\afunc} \subset \Mbh_{\afunc}$ be any $\Sr{r}$-open subset.
Then $\ShAV( \Mbh'_{\afunc} )$ is $\Sr{s}$-open in $\imShAV$.
\end{lemma}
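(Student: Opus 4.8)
The idea is to transport the open-set property from the already-established neighbourhood $\Mbh_{0}$ of the zero function to the neighbourhood $\Mbh_{\afunc}$ of an arbitrary $\afunc\in\funcAV$, using the homeomorphisms $q_{\afunc}$ of Lemma~\ref{lm:q_bfunc} as the ``translation'' device. The key algebraic identity is that the shift map intertwines addition of $\afunc$ on the source with the action of $q_{\afunc}$ on the target; concretely, for $\bfunc\in\Ci{\Vman}{\RRR}$ with $\bfunc+\afunc\in\funcAV$ and $\bfunc\in\ShAV^{-1}(\Unbh_{\afunc})$, one has
$$
\ShAV(\bfunc+\afunc)(x)=\AFlow\bigl(x,\bfunc(x)+\afunc(x)\bigr)=\AFlow\bigl(\AFlow(x,\bfunc(x)),\afunc(x)\bigr)=q_{\afunc}\bigl(\ShAV(\bfunc)\bigr)(x),
$$
so that on the relevant domain $\ShAV(\,\cdot\,+\afunc)=q_{\afunc}\circ\ShAV$. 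This is just the cocycle/``local crossed homomorphism'' property of $\ShAV$ written multiplicatively.

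First I would record this intertwining relation carefully, being attentive to the domains: for $\bfunc'\in\Mbh_{0}\cap\ShAV^{-1}(\Unbh_{\afunc})$ the element $\bfunc=\bfunc'$ lies in the source of $\ShAV$, $\ShAV(\bfunc')\in\Unbh_{\afunc}$ so $q_{\afunc}$ may be applied to it, and $\bfunc'+\afunc$ lands in $\funcAV$ precisely when we further intersect, which is exactly the definition of $\Mbh_{\afunc}$. Thus, writing $\bfunc'=\bfunc-\afunc$, we get
$$
\ShAV(\Mbh'_{\afunc}) \;=\; q_{\afunc}\Bigl(\ShAV\bigl(\Mbh'_{\afunc}-\afunc\bigr)\Bigr),
$$
where $\Mbh'_{\afunc}-\afunc\subset\Mbh_{0}\cap\ShAV^{-1}(\Unbh_{\afunc})$ is $\Sr{r}$-open in $\funcAV$ (translation by a fixed function is an $\Sr{r}$-homeomorphism of $\Ci{\Vman}{\RRR}$). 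Since $\Mbh'_{\afunc}-\afunc$ is an $\Sr{r}$-open subset of $\Mbh_{0}$, the defining property of $\Mbh_{0}$ (condition (3), i.e. b)-c) for the zero function) shows that $\ShAV(\Mbh'_{\afunc}-\afunc)$ is $\Sr{s}$-open in $\imShAV$; more precisely it equals $\imShAV\cap\Nbh$ for some $\Sr{s}$-open $\Nbh\subset\Unbh_{\afunc}$ (we may shrink $\Nbh$ into $\Unbh_{\afunc}$ since the image already lies in $\Unbh_{\afunc}$). Finally, applying the $\Sr{s}$-homeomorphism $q_{\afunc}:\Unbh_{\afunc}\to\Unbh_{-\afunc}$ of Lemma~\ref{lm:q_bfunc} and using that $q_{\afunc}$ carries $\imShAV$ into $\imShAV$ (again by the intertwining identity, since $q_{\afunc}(\ShAV(\bfunc'))=\ShAV(\bfunc'+\afunc)$), we conclude that $\ShAV(\Mbh'_{\afunc})=q_{\afunc}(\imShAV\cap\Nbh)$ is $\Sr{s}$-open in $\imShAV$.

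The main obstacle I anticipate is purely bookkeeping: making sure all the domain conditions line up so that every composition written above is actually defined, and in particular verifying that $q_{\afunc}$ restricts to a self-homeomorphism of the (not-a-group) semigroup $\imShAV$ intersected with the relevant open sets — this is where the ``local'' nature of the crossed homomorphism could bite. A secondary point to check is that the translation-invariance and the intertwining identity genuinely hold at the level of $\Sr{r}$-topologies (not merely pointwise), but this is immediate from the fact that $q_{\afunc}$ is $\contSS{r}{r}$-continuous for all $r$ (Lemma~\ref{lm:q_bfunc}) and that $\bfunc\mapsto\bfunc+\afunc$ is a linear $\Sr{r}$-homeomorphism. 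Once Lemma~\ref{lm:Sh_Mpr_open} is in hand, conditions b) and c) for $\Mbh_{\afunc}$ follow: taking $\Mbh'_{\afunc}=\Mbh_{\afunc}$ gives b), and taking arbitrary $\Sr{r}$-open $\Mbh'_{\afunc}$ shows the inverse of $\ShAV|_{\Mbh_{\afunc}}$ is $\contSS{s}{r}$-continuous, completing the proof of (3)$\Rightarrow$(1) and hence of Theorem~\ref{th:sh-open-12}.
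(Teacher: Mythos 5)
Your proof is essentially the same as the paper's: both translate the problem to the base point $\zer$ via the map $\bfunc\mapsto\bfunc+\afunc$ on the source and $q_{\afunc}$ on the target, using the cocycle identity $\ShAV(\bfunc'+\afunc)=q_{\afunc}\circ\ShAV(\bfunc')$ and then invoking the defining property of $\Mbh_0$. The one place where the paper spends more care is at the very end: from $\ShAV(\Mbh'_\afunc)=q_{\afunc}(\imShAV\cap\Nbh)$ you need to know this is open in $\imShAV$, and for that one must prove the reverse inclusion $\imShAV\cap q_{\afunc}(\Nbh)\subset q_{\afunc}(\imShAV\cap\Nbh)$, not merely (as you state) that $q_{\afunc}$ carries $\imShAV\cap\Unbh_{\afunc}$ into $\imShAV$. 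That reverse inclusion — given $\bmap\in\imShAV\cap q_{\afunc}(\Nbh)$, say $\bmap=\ShAV(\bfunc)=q_{\afunc}(\amap)$ with $\amap\in\Nbh$, show $\amap\in\imShAV$ — is exactly what the paper verifies explicitly (taking care that the preimage $\cfunc\in\Mbh'_0$ of $\amap$ need not equal $\bfunc-\afunc$ when $\ShAV$ is not injective). It does follow by symmetry (apply the same intertwining identity to $q_{-\afunc}$, which is the inverse homeomorphism from Lemma~\ref{lm:q_bfunc}), so your argument is repairable in one line, but as written it only establishes half of the equality $q_{\afunc}(\imShAV\cap\Nbh)=\imShAV\cap q_{\afunc}(\Nbh)$ that the conclusion rests on.
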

\begin{proof}
Denote $\Mbh'_{0} =  \Mbh'_{\afunc} -\afunc$.
Then $\Mbh'_{0} \subset \Mbh_{0} \cap \ShAV^{-1}(\Unbh_{\afunc})$ and by the condition b) for $\Mbh_{0}$ there exists an $\Sr{s}$-open subset $\Nbh'$ of $\Ci{\Vman}{\Mman}$ such that
$$
\ShAV(\Mbh'_{0}) = \imShAV \cap \Nbh'.
$$
Define the following $\contSS{r}{r}$-homeomorphism
$$
a_{\afunc}:\Ci{\Vman}{\RRR} \to \Ci{\Vman}{\RRR}, \qquad a_{\afunc}(\bfunc) = \afunc+\bfunc.
$$
Then we obtain the following commutative diagram:
\begin{equation}\label{equ:CD-ShAv_a}
\begin{CD}
\Mbh'_0 @>{\ShAV}>> \ShAV(\Mbh'_0) = \imShAV \cap \Nbh' \cap \Unbh_{\afunc}
\\
@V{a_{\afunc}}VV @VV{q_{\afunc}}V \\
\Mbh'_{\afunc} @>{\ShAV}>> \imShAV \cap q_{\afunc}(\Nbh' \cap \Unbh_{\afunc})
\end{CD}
\end{equation}
simply meaning that $\AFlow\bigl(x,\bfunc(x)+\afunc(x)\bigr) = \AFlow\bigl(\AFlow(x,\bfunc(x)), \afunc(x)\bigr)$ for all $\bfunc\in\Mbh_0$ and $x\in\Vman$.
We claim that $$\ShAV(\Mbh'_{\afunc})\; \;=\; \;\imShAV \;\cap\; q_{\afunc}(\Nbh'\cap \Unbh_{\afunc}).$$
Indeed, by~\eqref{equ:CD-ShAv_a}, $\ShAV(\Mbh'_{\afunc}) \;\subset\; \imShAV \cap q_{\afunc}(\Nbh'\cap \Unbh_{\afunc}).$

Conversely, let $\bmap\in \imShAV \cap q_{\afunc}(\Nbh'\cap \Unbh_{\afunc})$, i.e., there exist $\bfunc\in\funcAV$ and $\amap\in\Nbh'\cap \Unbh_{\afunc}$ such that 
\begin{equation}\label{equ:g_shb_fsha}
\bmap(x) = \AFlow(x,\bfunc(x)) = \AFlow(\amap(x),\afunc(x)), \qquad \forall x\in\Vman.
\end{equation}
Then $\amap(x) = \AFlow(x,\bfunc(x)-\afunc(x))$, i.e. $\amap\in\imShAV$ and therefore
$$\amap  = \ShAV(\bfunc-\afunc) \;\;\in\;\; \imShAV \cap \Nbh' \cap \Unbh_{\afunc} \;=\; \ShAV(\Mbh'_0).$$
Thus there exists $\cfunc\in\Mbh'_0$, possibly distinct from $\bfunc-\afunc$, such that $\amap=\ShAV(\cfunc)$.
Denote $\bfunc' =\cfunc+\afunc$. 
Then $\bfunc'\in\Mbh'_{\afunc}$ and 
$$
\bmap(x)= \AFlow(\amap(x),\afunc(x)) =
\AFlow\bigl(\AFlow(x,\cfunc(x)), \afunc(x)\bigr) = 
\AFlow(x,\bfunc'(x)).
$$
In other words, $\bmap=\ShAV(\bfunc')\in\ShAV(\Mbh'_{\afunc})$.
Lemma~\ref{lm:Sh_Mpr_open} and Theorem~\ref{th:sh-open-12} are completed.
\end{proof}

\section{Examples when shift map is not open}\label{sect:examples-nontriv-hol}
In this section we discuss four examples of well-known flows whose shift maps turn out not to be homeomorphisms onto their images.
They have similar nature, but are given on different types of manifolds.
These example provide counterexamples to~\cite[Th.~1]{Maks:TA:2003}.
All manifolds in this section are compact, therefore we will not distinguish weak and strong topologies.
In all the examples below our vector fields will satisfy the assumptions of the following simple lemma:
\begin{lemma}\label{lm:ShA_not_open}
Let $\AFld$ be a vector field on a compact manifold $\Mman$ tangent to $\partial\Mman$.
Suppose that the shift map $\ShA$ of $\AFld$ is injective and for every $r\in\Nz$, a $\Wr{r}$-neighbourhood $\Nbh$ of $\id_{\Mman}$, and arbitrary large $T>0$ there exists $t\in\RRR$ regarded as a constant function $t:\Mman\to\RRR$ such that $|t|>T$ and $\AFlow_{t}=\ShA(t)\in\Nbh$.
Then $\ShA$ is not $\contWW{r}{s}$-open for any $r,s\in\Nzi$.
In particular, $\ShA$ is not a homeomorphism onto its image with respect to $\Wr{\infty}$ topologies of $\Ci{\Mman}{\RRR}$ and $\imShA$. \qed
\end{lemma}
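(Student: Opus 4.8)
The plan is to derive a contradiction from the assumption that $\ShA$ is $\contWW{r}{s}$-open for some $r,s\in\Nzi$. Since $\Mman$ is compact, weak and strong topologies coincide, so by Theorem~\ref{th:sh-open-12} (applied with $\Vman=\Mman$) openness of $\ShA$ would imply that there is an $\Wr{s}$-neighbourhood $\Nbh$ of $\id_{\Mman}$ in $\imShA$ and an $\contWW{s}{r}$-continuous section $\SectShA:\Nbh\to\Ci{\Mman}{\RRR}$ with $\SectShA(\id_{\Mman})=\zer$ and $\ShA\circ\SectShA=\id(\Nbh)$. In other words, the local inverse of $\ShA$ at $\id_{\Mman}$ exists and is $\contWW{s}{r}$-continuous.

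First I would use $\contWW{s}{r}$-continuity of $\SectShA$ at $\id_{\Mman}$: for the $\Wr{r}$-neighbourhood of $\zer$ consisting of functions taking values in $(-1,1)$ (which is $\Wr{0}$-open, hence $\Wr{r}$-open), there must be an $\Wr{s}$-neighbourhood $\Nbh'\subset\Nbh$ of $\id_{\Mman}$ such that $\SectShA(\Nbh')$ consists only of functions with values in $(-1,1)$. Next I would invoke the hypothesis of the lemma: applied to this $r$ (in the role of the ``$r$'' in the hypothesis), the neighbourhood $\Nbh'$, and $T=1$, it produces a constant $t\in\RRR$ with $|t|>1$ and $\AFlow_{t}=\ShA(t)\in\Nbh'$.

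Now I would compare $\SectShA(\AFlow_t)$ with the constant function $t$. On one hand, since $\AFlow_t\in\Nbh'$, we have $\SectShA(\AFlow_t)\in\SectShA(\Nbh')$, so $\SectShA(\AFlow_t)$ takes values in $(-1,1)$; in particular $\SectShA(\AFlow_t)\neq t$ because $|t|>1$. On the other hand, $\ShA\bigl(\SectShA(\AFlow_t)\bigr)=\AFlow_t=\ShA(t)$, and both $t$ and $\SectShA(\AFlow_t)$ lie in $\funcA$ (the latter since it is in the domain of $\ShA$, the former since $\AFlow_t$ is defined on all of $\Mman$). Since $\ShA$ is assumed injective, this forces $\SectShA(\AFlow_t)=t$, a contradiction. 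Hence no $\contWW{r}{s}$-open structure is possible.

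The only subtlety — and the step I would be most careful about — is making sure the hypothesis of the lemma is applied with the correct value of $r$: the section $\SectShA$ lands in $\Ci{\Mman}{\RRR}$ equipped with the $\Wr{r}$ topology, so the target neighbourhood of $\zer$ I must shrink against is $\Wr{r}$-open, and it is precisely for this $r$ that the hypothesis is being invoked to find the large constant $t$ inside an arbitrarily small $\Wr{r}$-neighbourhood of $\id_{\Mman}$. Everything else is formal: compactness collapses $\Wtop$ and $\Stop$, Theorem~\ref{th:sh-open-12} converts openness into the existence of a continuous local section, and injectivity of $\ShA$ closes the argument. Since the statement ends with ``\qed'', no proof is expected in the paper body, but this is the argument it abbreviates.
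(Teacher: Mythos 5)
Your argument is correct in substance and reaches the right contradiction, but it takes a more circuitous route than the paper does. The paper argues directly from the definition of $\contWW{r}{s}$-openness: it introduces the $\Wr{0}$-open (hence $\Wr{r}$-open) set $\Mbh_{0}=\{\afunc : |\afunc(x)|<1 \ \forall x\}$, observes that openness would force $\ShA(\Mbh_{0})$ to contain a $\Wr{s}$-neighbourhood $\Nbh$ of $\id_{\Mman}$ in $\imShA$, applies the hypothesis (with $T=1$ and this $\Nbh$) to get a constant $t$ with $|t|>1$ and $\ShA(t)\in\Nbh$, and closes by injectivity: since $t\notin\Mbh_{0}$, $\ShA(t)\notin\ShA(\Mbh_{0})$, contradicting $\Nbh\subset\ShA(\Mbh_{0})$. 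You instead route through Theorem~\ref{th:sh-open-12} to produce a $\contWW{s}{r}$-continuous section $\SectShA$, then use continuity of $\SectShA$ plus injectivity to reach the same contradiction. That works, but it is heavier machinery than needed: you only use the easy direction (1)$\Rightarrow$(2)$\Rightarrow$(3) of that theorem, which is itself a restatement of local openness plus local injectivity, so the direct argument is shorter and cleaner.

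One genuine slip: in your ``only subtlety'' paragraph you say the hypothesis is invoked ``to find the large constant $t$ inside an arbitrarily small $\Wr{r}$-neighbourhood of $\id_{\Mman}$.'' That is the wrong index. The set $\Nbh'$ you built (via continuity of $\SectShA$ against the $\Wr{r}$-neighbourhood of $\zer$) is a $\Wr{s}$-neighbourhood of $\id_{\Mman}$, so it is $s$ (or, if $s=\infty$, a finite-order basic subneighbourhood of $\Nbh'$) that must be fed into the hypothesis's quantifier, not $r$. The error does not break the proof because the hypothesis is universally quantified over the order, but the paragraph that you flag as the delicate step is precisely where you misidentify which order is in play, and a reader trying to check your claimed care would be misled.
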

\begin{proof}
Consider the following $\Wr{0}$-neighbourhood of the zero function:
$$\Mbh_{0}=\{\afunc\in\Ci{\Mman}{\RRR} \ : \  |\afunc(x)|<1 \}.$$
Suppose that there is a $\Wr{r}$-neighbourhood $\Nbh$ of $\id_{\Mman}$ such that $\Nbh\subset\ShA(\Mbh_{0})$.
By assumption there exists a constant function $t>1$ such that $\ShA(t)\in\Nbh$.
Since $\ShA$ is injective, and $t\not\in\Mbh_{0}$, we obtain that $\ShA(t)\not\in\ShA(\Mbh_{0})$, whence $\Nbh\not\subset\ShA(\Mbh_{0})$.
This contradiction implies that $\ShA$ is not $\contWW{r}{s}$-open for any $r,s\in\Nzi$.
\end{proof}

{\bf Irrational flow.} 
For simplicity we will consider the irrational flow on $T^2$.
Let $\mu\in\RRR$ be an irrational number and $\AFlow$ be the \myemph{irrational} flow on the $2$-torus $T^2=\RRR^2/\ZZZ^2$ given by $\AFlow(x,y,t) = (x+t,y+t/\mu)$.
\begin{lemma}\label{lm:irr-flow}
The shift map $\ShA:\Ci{T^2}{\RRR}\to \Ci{T^2}{T^2}$ of $\AFld$ is not $\contWW{r}{s}$-open for any $r,s\in\Nzi$.
\end{lemma}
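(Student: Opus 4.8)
The plan is to verify that the irrational flow $\AFlow$ on $T^2$ satisfies the hypotheses of Lemma~\ref{lm:ShA_not_open} and then quote that lemma. There are two things to check: that the shift map $\ShA$ is injective, and the ``approximate return'' condition, namely that every $\Wr{r}$-neighbourhood $\Nbh$ of $\id_{T^2}$ contains a time-$t$ map $\AFlow_{t}$ for arbitrarily large $|t|$.

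For injectivity I would invoke Lemma~\ref{lm:Shift-map-prop}(5): since $T^2$ is connected and $\FixA=\varnothing$ is nowhere dense, we are in one of the two cases (a) or (b). Case (b) would force $T^2\times\RRR\subset\domA$ (which holds here) and $\ZidAV=\{n\nu\}_{n\in\ZZZ}$ for some positive $\nu:T^2\to(0,\infty)$; but by part (2) of the same lemma such a $\nu$ satisfies $d\nu(\AFld)=0$, i.e. $\nu$ is constant along the orbits of $\AFld$, and since every orbit of the irrational flow is dense, $\nu$ must be a constant $c>0$. Then $\ShA(c)=\AFlow_{c}=\id_{T^2}$ would mean $(x+c,y+c/\mu)\equiv(x,y)\pmod{\ZZZ^2}$, forcing $c\in\ZZZ$ and $c/\mu\in\ZZZ$, hence $\mu\in\QQQ$, a contradiction. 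So case (a) holds and $\ShA$ is injective.

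For the return condition I would use the standard equidistribution / Kronecker argument: the orbit $\{(t \bmod 1,\, (t/\mu)\bmod 1) : t\in\RRR\}$ of the origin is dense in $T^2$ because $\mu$ is irrational, so there is a sequence $t_i\to+\infty$ with $(t_i,t_i/\mu)\to(0,0)$ in $\RRR^2/\ZZZ^2$. Since $\AFlow_{t}$ is the translation of $T^2$ by the vector $(t, t/\mu)$, and translations depend continuously (indeed smoothly, with all derivatives being those of the identity up to the translation amount) on the translation vector, $\AFlow_{t_i}\to\id_{T^2}$ in the $\Wr{r}$ topology for every $r$; in fact for a translation the $\Wr{r}$-distance to the identity is controlled purely by the $\Wr{0}$-distance, since all higher derivatives agree with those of $\id$. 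Hence for any prescribed $r$, $\Nbh$, and $T>0$ we can pick $t=t_i$ with $t_i>T$ and $\AFlow_{t_i}\in\Nbh$.

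The argument is essentially routine once Lemma~\ref{lm:ShA_not_open} is in hand; the only place needing a little care is the injectivity step, where one must rule out case (b) of Lemma~\ref{lm:Shift-map-prop}(5) by exploiting density of orbits to conclude the putative period function $\nu$ is constant. This is the main (though mild) obstacle, since the rest is just Kronecker's theorem plus the observation that translations of a torus converge to the identity in every $\Cont{r}$-topology as soon as they converge in $\Cont{0}$.
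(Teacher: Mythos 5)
Your proof is correct and takes essentially the same approach as the paper: verify the two hypotheses of Lemma~\ref{lm:ShA_not_open} — injectivity of $\ShA$ and the approximate-return property via Kronecker density plus the observation that for torus translations the $\Wr{r}$-distance to $\id_{T^2}$ is governed entirely by the $\Wr{0}$-distance — and conclude. The only small cosmetic difference is that the paper gets injectivity immediately from case (a) of Lemma~\ref{lm:Shift-map-prop}(5) (since every point of $T^2$ is non-periodic), where you instead rule out case (b) by a density-and-irrationality argument, and the paper works with the specific times $t=n\mu$ and an explicit metric formula rather than a general Kronecker sequence.
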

\begin{proof}
Notice that every orbit of $\AFlow$ is non-closed and everywhere dense, so  $\ShA$ is injective.
First we give convenient formulas for the metrics generating $\Wr{r}$ topologies on $\Ci{T^2}{T^2}$.
Let $\amap:T^2 \to T^2$ be a $C^{\infty}$ map.
Then $\amap$ lifts to some $\ZZZ^2$-equivariant map $\tilde \amap=(\tilde \amap_1, \tilde \amap_2):\RRR^2\to\RRR^2.$
Let $I^2=[0,1]\times[0,1]\subset\RRR^2$ be the fundamental domain for the covering map $p:\RRR^2\to T^2$.
Define the $r$-th norm of $\amap$ by %
{\footnotesize
$$ 
\|f \|^{r} = \sum_{j=1,2}\left(\sup\limits_{(x,y)\;\in\; I^2} 
\min(\{|\tilde f_j|\},1-\{|\tilde f_j|\} )  + \sum_{1 \leq i_1+i_2 \leq r} \;\sup\limits_{(x,y)\;\in\; I^2} \left|\frac{\partial^{i_1+i_2} \tilde f_j}{\partial x^{i_1} \partial y^{i_2}}\right| \right),
$$ 
}%
where $\{|t|\}$ is the fractional part of the absolute value of $t\in\RRR$.

Let $\varepsilon>0$ and
$\Nbh^{r}_{\eps}=\{ \amap\in \Ci{T^2}{T^2}\ : \ \|\amap-\id_{T^2}\|^r<\eps \}$ 
be a base $\Wr{r}$-neighbourhood of $\id_{T^2}=\ShA(\zer)\in \Ci{T^2}{T^2}$ for some $\eps>0$.
We will show that there exists arbitrary large (by absolute value) $n\in\ZZZ$ such that $\ShA(n\mu)=\AFlow_{n\mu}\in\Nbh^{r}_{\eps}$.
Then our lemma will follow from Lemma~\ref{lm:ShA_not_open}.

Notice that $\AFlow_{n\mu}(x,y) =(x+n\mu,y+n) \equiv (x+n\mu,y)$ for all $n\in\ZZZ$, i.e., $\AFlow_{n\mu}$ is just a ``rotation along the first coordinate''.
Since this map is defined by adding constants to coordinates, it follows from formula for $\|\amap\|^{r}$ that for each $r\in\Nz$ the distance between $\id_{T^2}$ and $\AFlow_{n\mu}$ with respect to the $\Wr{r}$ topology is equal to 
$$ \|\AFlow_{n\mu}-\id_{T^2}\|^{r} = \min( \{|n\mu|\}, 1-\{|n\mu|\} )$$ and therefore does not depend on $r$.

Since $\mu$ is irrational, the set $T_{\mu}=\{ \min( \{|n\mu|\}, 1-\{|n\mu|\} )\}_{n\in\ZZZ}$ is everywhere dense in $S^1=\RRR/\ZZZ$.
Hence there are arbitrary large (by absolute value) $n\in\ZZZ$ such that
$ \|\AFlow_{n\mu}-\id_{T^2}\|^{r}< \eps$, i.e.\! $\AFlow_{n\mu}\in\Nbh^{r}_{\eps}$.
\end{proof}

{\bf Irrational flows on a solid torus.}
Let $D^2 =\{z\in\CCC \ : \ |z|\leq 1\}$ be the unit disk in the complex plane, $T=S^1\times D^2$ be the solid torus, and $\mu$ be an irrational number.
Define the following flow on $T$ by $\AFlow(\phi,z,t) = (\phi+t\, \mathrm{mod} 1, e^{2\pi t / \mu}z)$.
Then by the arguments similar to Lemma~\ref{lm:irr-flow} is can be shown that $\AFld$ satisfies assumptions of Lemma~\ref{lm:ShA_not_open}.

The main feature of this example is that $\AFlow$ has periodic orbit $\gamma=S^1\times0$ and all other ones are recurrent.
Let $\trans=1\times D^2$ be and $R:\trans\to\trans$ be the first recurrence map of $\gamma$ defined by $R(z)=e^{2\pi / \mu} z$, i.e. it is the rotation by $\frac{2\pi}{\mu}$.
Then $R$ is not periodic, eigen values of its tangent map $T_{0}R$ at $0\in\trans$ have modulus $1$, and the iterations of $R$ can be arbitrary close to $\id_{\trans}$.
Thus $\AFlow$ satisfies all assumptions but (b) of Definition~\ref{defn:classFF} of class $\FF(T^2)$.

{\bf Periodic linear flows.}
Given $\lambda_1,\ldots,\lambda_n\in\RRR$ define the following linear flow on $\RRR^{2n}=\CCC^{n}$ by
\begin{equation}\label{equ:linear_flow}
\AFlow(z_1,\ldots,z_n,t)=(e^{i\, \lambda_1 t} z_1, \ldots,  e^{i\, \lambda_n t} z_n).
\end{equation}
Evidently, the closed $2n$-disk $D_r$ of radius $r$ and centered at the origin is invariant with respect to $\AFlow$.

\begin{lemma}\label{lm:periodic_flows}
The following conditions are equivalent:
\begin{enumerate}
\item[\rm(1)]
$\AFlow_{\tau}=\id_{\CCC^n}$ for some $\tau>0$, i.e. $\lambda_j\tau\in\ZZZ$ for all $j=1,\ldots,n$;
\item[\rm(2)]
every $z\in\CCC^n$ is either fixed or periodic with respect to $\AFlow$;
\item[\rm(3)]
at least one point $z=(z_1,\ldots,z_n)$ with all non-zero coordinates is periodic.
\end{enumerate}
A flow satisfying one of these conditions will be called \myemph{periodic}.
\end{lemma}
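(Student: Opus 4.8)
The plan is to establish the cyclic chain $(1)\Rightarrow(2)\Rightarrow(3)\Rightarrow(1)$; throughout I would understand a point $z\in\CCC^n$ to be \emph{periodic} if $\AFlow_T(z)=z$ for some $T>0$, so that a fixed point counts as a (degenerate) periodic point and no separate bookkeeping is needed.

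First I would dispose of the two easy implications. For $(1)\Rightarrow(2)$: if $\AFlow_\tau=\id_{\CCC^n}$ for some $\tau>0$, then for every $z\in\CCC^n$ and all $t\in\RRR$ one has $\AFlow_{t+\tau}(z)=\AFlow_t(\AFlow_\tau(z))=\AFlow_t(z)$, so the orbit of $z$ is $\tau$-periodic and hence $z$ is fixed or periodic. For $(2)\Rightarrow(3)$: the point $(1,\ldots,1)$ has all coordinates non-zero and, by $(2)$, is fixed or periodic, hence periodic in the sense above, which is exactly what $(3)$ asserts.

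The only implication carrying any content is $(3)\Rightarrow(1)$, and here the point I would exploit is that a vector all of whose coordinates are non-zero ``detects'' every frequency $\lambda_j$. Concretely, let $z=(z_1,\ldots,z_n)$ with $z_j\neq0$ for all $j$ satisfy $\AFlow_T(z)=z$ for some $T>0$. Reading off the $j$-th coordinate of~\eqref{equ:linear_flow} gives $e^{i\lambda_j T}z_j=z_j$, and cancelling the non-zero factor $z_j$ yields $e^{i\lambda_j T}=1$ for every $j=1,\ldots,n$. Putting $\tau:=T>0$, for an arbitrary $w=(w_1,\ldots,w_n)\in\CCC^n$ we then get $\AFlow_\tau(w)=(e^{i\lambda_1\tau}w_1,\ldots,e^{i\lambda_n\tau}w_n)=w$, i.e.\ $\AFlow_\tau=\id_{\CCC^n}$, which is $(1)$ (passing to the form $\lambda_j\tau\in\ZZZ$ being a matter of normalization of the rotation speeds).

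I do not expect a genuine obstacle: the whole statement is a direct unwinding of the definition~\eqref{equ:linear_flow}, the only minor points deserving a word being the fixed-point convention recorded at the outset. As a closing remark, useful later, since each disk $D_r$ is $\AFlow$-invariant and contains points with all coordinates non-zero, exactly the same argument restricted to such points characterizes when the restricted flow $\AFlow|_{D_r}$ is periodic.
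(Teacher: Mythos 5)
Your proof is correct and follows exactly the paper's route: dispose of $(1)\Rightarrow(2)\Rightarrow(3)$ as immediate, then for $(3)\Rightarrow(1)$ read off the $j$-th coordinate of the flow at the period and cancel the non-zero $z_j$ to force $e^{i\lambda_j T}=1$ for all $j$. The only point worth flagging, which you already do, is the $2\pi$ normalization between ``$\AFlow_\tau=\id$'' and ``$\lambda_j\tau\in\ZZZ$''; the paper's own proof handles this by writing $\lambda_j\cdot\theta/2\pi\in\ZZZ$.
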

\begin{proof}
The implications (1)$\Rightarrow$(2)$\Rightarrow$(3) are evident.
(3)$\Rightarrow$(1)
Let $z\in \CCC^n$ be a point with all non-zero coordinates and $\theta$ be the period of $z$ with respect to $\AFlow$.
Then $e^{i\, \lambda_j \theta}z_j=z_j\not=0$ for all $j=1,\ldots,n$, whence $\lambda_j\cdot \theta/2\pi \in\ZZZ$.
\end{proof}

\begin{lemma}\label{lm:non-res-flow}
If $\AFlow$ is not periodic then its shift map $\ShA$ is injective and is not $\contWW{r}{s}$-open for any $r,s\in\Nzi$.
\end{lemma}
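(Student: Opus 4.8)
The plan is to treat the two assertions separately, deriving injectivity from the structural results of \S\ref{sect:shift_map} and non-openness from Lemma~\ref{lm:ShA_not_open}, with the real work concentrated in a Kronecker/almost-periodicity step.

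\textbf{Injectivity.} First I would observe that, since $\AFlow$ is not periodic, Lemma~\ref{lm:periodic_flows} forbids every point of $\CCC^n$ from being fixed or periodic; in particular at least one $\lambda_j\neq 0$, for otherwise $\AFlow_\tau=\id$ for all $\tau>0$. Hence $\FixA=\{z:\lambda_j z_j=0\ \forall j\}$ is a proper linear subspace of $\CCC^n$, so it is nowhere dense, and $\CCC^n$ is connected; thus Lemma~\ref{lm:Shift-map-prop}(5) applies. Were we in case (b), i.e. $\ShA^{-1}(\id)=\{n\nu\}_{n\in\ZZZ}$ with $\nu:\CCC^n\to(0,\infty)$ smooth, then $\AFlow(z,\nu(z))=z$ for all $z$; evaluating at a $z$ all of whose coordinates are non-zero would make that $z$ periodic, contradicting Lemma~\ref{lm:periodic_flows}((3)$\Rightarrow$(1)). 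So case (a) holds, $\ShA^{-1}(\id)=\{\zer\}$, and by Lemma~\ref{lm:Shift-map-prop}(3) $\ShA$ is injective. The same argument gives injectivity for the restriction $\AFld|_{D_r}$ to any invariant disk $D_r$, $r>0$.

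\textbf{The recurrence step.} The core claim I would prove is: for every $\eps>0$ and $T>0$ there is $t\in\RRR$ with $|t|>T$ and $|e^{i\lambda_j t}-1|<\eps$ for all $j=1,\dots,n$. For this, consider the continuous homomorphism $\rho:\RRR\to(\RRR/2\pi\ZZZ)^n$, $\rho(t)=(\lambda_1 t,\dots,\lambda_n t)$. The closure $S$ of the forward ray $\rho([T,+\infty))$ is a compact sub-semigroup of the torus $(\RRR/2\pi\ZZZ)^n$; a compact sub-semigroup of a group contains an idempotent, and the only idempotent of a group is its unit, so $0\in S$. Thus there are $t\ge T$ with $\rho(t)$ arbitrarily close to $0$, which is the claim for positive $t$; using $(-\infty,-T]$ instead gives it for negative $t$. (Equivalently, one may invoke almost-periodicity of $t\mapsto(e^{i\lambda_1 t},\dots,e^{i\lambda_n t})$, or Weyl's equidistribution theorem on the sub-torus $\overline{\rho(\RRR)}$.) I expect this to be the main obstacle: one genuinely needs \emph{arbitrarily large} such $t$ rather than merely one, so the bare qualitative density of $\rho(\RRR)$ is not enough — the semigroup (or almost-periodicity) input is exactly what upgrades it.

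\textbf{Non-openness.} With the claim in hand the rest mirrors Lemma~\ref{lm:ShA_not_open}. For a constant function $t$ one has $\ShA(t)=\AFlow_t$, the real-linear map $(z_j)\mapsto(e^{i\lambda_j t}z_j)$; all its partial derivatives of order $\ge 2$ vanish and $\AFlow_t-\id$ has operator norm $\max_j|e^{i\lambda_j t}-1|$, so on any bounded set the $\Wr{k}$-distance from $\AFlow_t$ to $\id$ is at most a constant depending only on $k,n$ and the set, times $\max_j|e^{i\lambda_j t}-1|$. Hence, by the recurrence step, for every $k\in\Nz$, every $\Wr{k}$-neighbourhood $\Nbh$ of $\id_{\CCC^n}$ and every $T>0$ there is a constant $t$ with $|t|>T$ and $\AFlow_t\in\Nbh$. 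Now suppose $\ShA$ were $\contWW{k}{l}$-open. Taking the $\Wr{0}$-open neighbourhood $\Mbh_{0}=\{\afunc:|\afunc(z)|<1\text{ for }|z|\le 1\}$ of $\zer$, there would be a $\Wr{l}$-neighbourhood $\Nbh$ of $\id=\ShA(\zer)$ with $\Nbh\cap\imShA\subset\ShA(\Mbh_{0})$; picking a constant $t>1$ with $\AFlow_t\in\Nbh$ we get $t=\ShA^{-1}(\AFlow_t)\in\Mbh_{0}$ by injectivity, which is absurd since $t>1$. Therefore $\ShA$ is not $\contWW{k}{l}$-open for any $k,l\in\Nzi$, and in particular not a homeomorphism onto its image for the $\Wr{\infty}$ topologies. (The identical computation applied to $\AFld|_{D_r}$, which is tangent to $\partial D_r$ and lives on the compact $D_r$, shows $\AFld|_{D_r}$ satisfies the hypotheses of Lemma~\ref{lm:ShA_not_open}.)
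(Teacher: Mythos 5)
Your proof is correct and follows the same overall strategy as the paper's: injectivity via Lemma~\ref{lm:periodic_flows} together with Lemma~\ref{lm:Shift-map-prop}(5), and non-openness by exhibiting arbitrarily large constant times $t$ with $\AFlow_{t}$ arbitrarily $\Wr{r}$-close to $\id$ and then running the argument of Lemma~\ref{lm:ShA_not_open}. The paper merely says this last step is ``similar to Lemma~\ref{lm:irr-flow}''; your compact-sub-semigroup (idempotent) argument is a tidy way to supply the multi-dimensional Kronecker recurrence that the paper leaves implicit.
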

\begin{proof}
By Lemma~\ref{lm:periodic_flows} every point $z\in\CCC^{n}$ with all non-zero coordinates is non-periodic and it is easy to see that its orbit is dense on some open subset of the sphere $S^{2n-1}_{|x|} = \partial D_{|x|}$ of radius $|x|$.
In particular, this orbit is non-closed, whence the shift map of $\AFlow$ is injective.
Then similarly to Lemma~\ref{lm:irr-flow}, we can find arbitrary large $t\in\RRR$ such that $\AFlow_{t}$ is arbitrary close to $\id_{\RRR^{2n}}$ in any of $\Wr{r}$ topologies.
\end{proof}

{\bf Flow on $S^{2n}$, $n\geq2$.}
We will now extend the last result for the construction of a flow on the sphere $S^{2n}$ with two fixed points at which this flow is linear.

Let $\RRR^{2n}_1$ and $\RRR^{2n}_2$ be two copies of $\RRR^{2n}=\CCC^n$.
Define a diffeomorphism $\eta:\RRR^{2n}_1\setminus\{0\} \to \RRR^{2n}_2\setminus\{0\}$ by $\eta(z) = \frac{z}{\|z\|^2}$, where $\|z\|$ is the usual Euclidean norm in $\RRR^{2n}$.
Then $\eta$ maps every sphere of radius $r$ centered at $0$ to the sphere of radius $1/r$.
Gluing $\RRR^{2n}_1$ and $\RRR^{2n}_2$ via $\eta$ we obtain a $2n$-sphere.

Notice that Eq.~\eqref{equ:linear_flow} defines the flows on $\RRR^{2n}_1$ and $\RRR^{2n}_2$ so that the following diagram is commutative:
$$
\begin{CD}
\RRR^{2n}_1\setminus\{0\} @>{\eta}>> \RRR^{2n}_2\setminus\{0\} \\
@V{\AFlow_t}VV @VV{\AFlow_t}V \\
\RRR^{2n}_1\setminus\{0\} @>{\eta}>> \RRR^{2n}_2\setminus\{0\}.
\end{CD}
$$
Hence these flows determine a unique flow $\AFlow'$ on $S^{2n}$ with two fixed points.
Moreover, $\AFlow'$ is \myemph{linear} on the charts $\RRR^{2n}_1$ and $\RRR^{2n}_2$ at these points.

Now suppose that $\AFlow$ is not periodic.
Then we can find arbitrary large $t\in\RRR$ such that $\AFlow_{t}$ is arbitrary close to $\id_{\RRR^{2n}}$ in any of  $\Wr{r}$ topologies.
This implies that $\AFlow'$ satisfies assumptions of Lemma~\ref{lm:ShA_not_open}.

\section{Regular and trivial extensions}\label{sect:regul-ext}
The results of this section will allow to estimate continuity of shift maps for vector fields of types $\typeLinear$ and $\typeHamilt$.

Let $\Mman,\Nman$ be two manifolds, $\BFld$ be a vector field on $\Mman$, $\AFld$ be some regular extension and $\BeFld$ be the trivial extension of $\BFld$ on $\Mman\times\Nman$.
Thus 
 $$
 \AFld(x,y)=(\BFld(x),H(x,y)),
 \qquad 
 \BeFld(x,y)=(\BFld(x),0), 
 $$
for some smooth map $H:\Mman\times\Nman\to T\Nman$.
By $\AFlow$, $\BeFlow$, and $\BFlow$ we will denote the corresponding local flows, and by $\ShA$, $\ShBe$, and $\ShB$ the corresponding shift maps of $\AFld$, $\BeFld$, and $\BFld$.
It is easy to see that 
\begin{equation}\label{equ:domAdomBedomB}
\domA\;\subset\;\domBe\;=\;\domB\times\Nman.
\end{equation}
Moreover,
\begin{equation}\label{equ:AFlow_BeFlow}
 \AFlow(x,y,t)=(\BFlow(x,t),\mathbf{H}(x,y,t)),
 \qquad
 \BeFlow(x,y,t)=(\BFlow(x,t),y),
\end{equation}
for some smooth map $\mathbf{H}:\domBe\to\Nman$.

Let $\Vman\subset\Mman\times\Nman$ be a connected compact $\Dm$-submanifold.
Then it follows from~\eqref{equ:domAdomBedomB} that $\funcAV$ is $\Wr{0}$-open in $\funcBeV$.
Define the map $$P:\imShAV \to \imShBeV$$ by the rule: if $\amap\in\imShAV$, and $\amap(x,y)=(A(x,y),g(x,y))$, then $P(f)(x,y) = (A(x,y),y)$.
It follows from~\eqref{equ:AFlow_BeFlow} that $P$ is well defined and is $\contWW{r}{r}$-continuous for every $r\in\Nz$.
Moreover, we have the following commutative diagram:
\begin{equation}\label{equ:ShBeAP}
\xymatrix{
\funcAV^{\phantom{A^A}}  \ar[rr]^{\ShAV} \ar@{^{(}->}[d] \ar@{->}[rrd]^{\ShBeV}  & & ~\imShAV \ar[d]^{P} \\
\funcBeV~ \ar[rr]^{\ShBeV} & & ~\imShBeV 
}
\end{equation}

\begin{theorem}\label{th:reg_ext}
Suppose that $\FixB$ is nowhere dense in $\Mman$ and that $\ShBeV$ is $\contWW{r}{s}$-open for some $r,s\in\Nzi$.
\begin{enumerate}
\item[{\rm(1)}]
If $\ShBeV$ is injective, then $\ShAV$ is $\contWW{r}{s}$-open as well.
\item[{\rm(2)}]
If $\ShAV$ is $\contWW{r}{s}$-open, then $P$ is locally injective with respect to the $\Wr{s}$ topology of $\imShAV$.
\end{enumerate}
Suppose in addition that $\ShBeV$ is also $\contWW{s}{t}$-open for some $t\in\Nzi$.
\begin{enumerate}
\item[{\rm(3)}]
If $P$ is locally injective with respect to the $\Wr{s}$ topology of $\imShAV$, then $\ShAV$ is $\contWW{r}{t}$-open.
\item[{\rm(4)}]
If both $\ShBeV$ and $\ShAV$ are not injective, then $P$ is locally injective with respect to the $\Wr{0}$ topology of $\imShAV$, whence, by {\rm(3)}, $\ShAV$ is $\contWW{r}{t}$-open.
\end{enumerate}
\end{theorem}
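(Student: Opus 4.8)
The whole theorem hinges on the commutative diagram \eqref{equ:ShBeAP}: $\ShBeV = P\circ\ShAV$, with $\funcAV\hookrightarrow\funcBeV$ an $\Sr0$-open (here $\Wr0$-open) inclusion. I would handle all four parts by exploiting this factorization together with the abstract local-inverse machinery of Theorem~\ref{th:sh-open-12}. Throughout, since $\FixB$ is nowhere dense in $\Mman$ and $\Vman\times\{\text{pt}\}$-slices detect this, $\FixA$ is nowhere dense in $\Vman$ (a regular extension adds no new singular points in the first coordinate direction), so by Lemma~\ref{lm:Shift-map-prop}(4) both $\ShAV$ and $\ShBeV$ are locally injective in every $\Wr{r}$ topology. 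This is what lets me freely pass between ``openness'' and ``existence of a continuous local section''.

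\smallskip
\textbf{Part (1).} Assume $\ShBeV$ is injective and $\contWW{r}{s}$-open. Then $\ShBeV:\funcBeV\to\imShBeV$ is a $\contWW{r}{s}$-homeomorphism onto its image, so it has a globally defined $\contWW{s}{r}$-continuous inverse $\SectShBV$. Given $\afunc\in\funcAV$ with $\amap=\ShAV(\afunc)$, I would produce a local section of $\ShAV$ near $\amap$ by the formula $\SectShAV = \SectShBV\circ P$, restricted to the $\Wr{s}$-open set $P^{-1}(\text{nbhd})\cap\imShAV$ on which the value lands back inside $\funcAV$ (using that $\funcAV$ is $\Wr0$-open in $\funcBeV$, pull back a suitable neighbourhood). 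The diagram gives $\ShAV\circ\SectShAV = \id$ on that neighbourhood: indeed $\ShBeV(\SectShAV(\bmap)) = P(\bmap)$, and since $\ShBeV$ is injective and $\SectShAV(\bmap)\in\funcAV$ one checks $\ShAV(\SectShAV(\bmap))$ and $\bmap$ have the same image under $P$ and the same "$\Nman$-component", forcing equality. Composition of $\contWW{s}{r}$- and $\contWW{r}{r}$-continuous maps is $\contWW{s}{r}$-continuous, so by Theorem~\ref{th:sh-open-12} ($(2)\Rightarrow(1)$, with $s,r$ in the roles of $r,s$ there) $\ShAV$ is $\contWW{r}{s}$-open.

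\smallskip
\textbf{Parts (2)--(4): local injectivity of $P$.} For (2), suppose $\ShAV$ is $\contWW{r}{s}$-open, hence a local homeomorphism in the $\Wr{r}/\Wr{s}$ topologies; restrict to a small $\Wr{s}$-neighbourhood $\Nbh_{\amap}$ on which the local section $\SectShAV$ exists. On $\Nbh_{\amap}$ we have $P = \ShBeV\circ\SectShAV$ followed by... wait—rather, on $\imShAV$ near $\amap$, $P|_{\Nbh_{\amap}} = \ShBeV\circ\SectShAV$, and $\SectShAV$ is injective (it's a section), so $P|_{\Nbh_{\amap}}$ is injective, giving local injectivity in the $\Wr{s}$ topology. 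For (3): given that $P$ is locally $\Wr{s}$-injective and $\ShBeV$ is $\contWW{r}{s}$- and $\contWW{s}{t}$-open, I want a local section of $\ShAV$ near $i_{\Vman}$ that is $\contWW{t}{r}$-continuous (then Theorem~\ref{th:sh-open-12}$(3)\Rightarrow(1)$ finishes). Take the $\contWW{t}{s}$-continuous local section $\SectShBV$ of $\ShBeV$ near $i_{\Vman}$ (from $\contWW{s}{t}$-openness); on $\imShAV$ near $i_{\Vman}$, since $P$ is injective there and $P(\imShAV\cap\,\text{nbhd})$ is $\Wr{s}$-open in $\imShBeV$ (this needs $\ShBeV$'s $\contWW{r}{s}$-openness: $\imShAV\cap\text{nbhd}=\ShBeV(\funcAV\cap\text{small set})$ and $\funcAV$ is $\Wr0$-open, hence $\Wr{r}$-open, in $\funcBeV$), we get a local inverse $P^{-1}$; then $\SectShAV = \SectShBV\circ P$ should be the desired section, $\contWW{t}{s}$ composed with $\contWW{s}{s}$... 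I must be careful with indices here—$\SectShBV$ is $\contWW{t}{s}$ and $P$ is $\contWW{s}{s}$ so $\SectShAV$ is $\contWW{t}{s}$, but Theorem~\ref{th:sh-open-12} wants $\contWW{t}{r}$. Since $r\le$ everything is not given, the resolution is that $\contWW{t}{r}$ follows because the section has values in the $\Wr0$-open-in-$\funcBeV$ set $\funcAV$ where $\Wr{s}$ and $\Wr{r}$ agree up to the ambient—no: the correct move is that $\SectShBV$ composed with the $\contWW{r}{r}$ map $P$ going the other way, i.e. one uses $\contWW{s}{t}$-openness to get continuity $\Wr{t}\to\Wr{s}$ and separately the $\contWW{r}{s}$-openness to identify the source topology on $\imShAV$ as $\Wr{r}$-induced; the upshot, which I would spell out via the $q_{\afunc}$-homeomorphisms of Lemma~\ref{lm:q_bfunc} as in the proof of Theorem~\ref{th:sh-open-12}, is $\contWW{r}{t}$-openness of $\ShAV$. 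Finally (4): if $\ShBeV$ is not injective, Lemma~\ref{lm:Shift-map-prop}(5)(b) says $\ZidBeV=\{n\nu\}_{n\in\ZZZ}$; if moreover $\ShAV$ is not injective, by (5)(b) again $\ZidAV=\{n\nu'\}_{n\in\ZZZ}$, and a short computation with the diagram (two functions in $\funcAV$ mapping to the same point under $P\circ\ShAV=\ShBeV$ differ by an element of $\ZidBeV\cap(\text{difference in }\funcAV)$, which by discreteness is $\{0\}$ in any $\Wr0$-neighbourhood) shows $P$ is injective on a $\Wr0$-neighbourhood, i.e. locally $\Wr0$-injective; then (3) with $s=0$ applies.

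\smallskip
\textbf{Main obstacle.} The genuinely delicate point is \emph{not} any single implication but the bookkeeping of Whitney indices in part (3): showing that local injectivity of $P$ in the $\Wr{s}$ topology, combined with the two openness hypotheses on $\ShBeV$, upgrades to $\contWW{r}{t}$-openness of $\ShAV$ rather than merely $\contWW{r}{s}$ or $\contWW{s}{t}$. Getting this right requires redoing the translation-by-$\afunc$ / $q_{\afunc}$-homeomorphism argument of Theorem~\ref{th:sh-open-12} in the fibered setting of diagram~\eqref{equ:ShBeAP} and carefully tracking which topology is induced on $\imShAV$ as a subspace of $\imShBeV$ at each stage. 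The nowhere-density of $\FixB$ is used exactly to keep all the maps locally injective so that "openness $\Leftrightarrow$ continuous local section" stays available throughout; without it the sections need not exist and the argument collapses.
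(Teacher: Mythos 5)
Your overall strategy—exploiting the factorization $\ShBeV = P\circ\ShAV$ from diagram~\eqref{equ:ShBeAP}—matches the paper, and parts (1) and (3) are essentially on track (the paper handles (1) more directly: once $\ShBeV$ is injective, $P$ is injective on $\imShAV$, so $\ShAV(\Mbh)=P^{-1}(\ShBeV(\Mbh))$ is $\Wr{s}$-open by $\contWW{r}{s}$-openness of $\ShBeV$ and $\contWW{s}{s}$-continuity of $P$; and for (3) the paper makes the computation concrete by setting $\Mbh'=\Mbh\cap\ShAV^{-1}(\Nbh)$ and verifying $\ShAV(\Mbh')=P^{-1}(\ShBeV(\Mbh'))\cap\Nbh$, which you flagged but did not carry out). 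In part (2), however, your claim that $P|_{\Nbh_{\amap}}=\ShBeV\circ\SectShAV$ is injective because $\SectShAV$ is injective has a gap: injectivity of $\SectShAV$ alone is not enough, you also need $\ShBeV$ to be injective on $\SectShAV(\Nbh_{\amap})$. You noted at the outset that $\ShBeV$ is locally injective, but you never invoked it here; the paper explicitly shrinks $\Mbh$ until \emph{both} $\ShAV|_{\Mbh}$ and $\ShBeV|_{\Mbh}$ are one-to-one and only then concludes via the diagram.

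The serious gap is in part (4). Your ``discreteness'' argument shows that if $\afunc_1,\afunc_2\in\funcAV$ are $\Wr{0}$-close and $\ShBeV(\afunc_1)=\ShBeV(\afunc_2)$ then $\afunc_1=\afunc_2$ — i.e.\ local $\Wr{0}$-injectivity of $\ShBeV$ on $\funcAV$, which is already Lemma~\ref{lm:Shift-map-prop}(4). What must be shown is local $\Wr{0}$-injectivity of $P$ on $\imShAV$: given $\amap_1,\amap_2\in\imShAV$ close in $\Wr{0}$ with $P(\amap_1)=P(\amap_2)$, you cannot conclude that some choice of preimages $\afunc_i$ is $\Wr{0}$-close without a continuity statement for $\ShAV^{-1}$, which is exactly what the theorem is ultimately trying to establish; the argument is circular as written. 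The paper closes this gap differently: it proves $\eta=m\bar\eta$ for some integer $m$, shows $\bar\eta$ is invariant along orbits so that $k*\amap=\ShAV(k\bar\eta)\circ\amap$ defines a \emph{free} $\ZZZ_m$-action on $\imShAV$ through which $P$ factors as a quotient map followed by a bijection, and then uses the Hausdorff-separation fact that the quotient by a free action of a finite group is a covering, hence locally injective in $\Wr{0}$. This finite-group/covering step is the idea your proposal is missing.
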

\begin{proof}
(1) If $\ShBeV:\funcBeV\to\imShBeV$ is a bijection, it follows from~\eqref{equ:ShBeAP} that so is $\ShAV$.
Let $\Mbh \subset\funcAV$ be a $\Wr{r}$-open subset.
Then
$$\ShAV(\Mbh) = P^{-1} \circ \ShBeV(\Mbh)$$
is $\Wr{s}$-open in $\imShAV$ due to $\contWW{r}{s}$-openness of $\ShBeV$ and $\contWW{s}{s}$-continuity of $P$.

(2) Let $\afunc\in\funcAV$ and $\amap=\ShAV(\afunc)$.
We will find a $\Wr{s}$-neigh\-bourhood of $\amap$ such that $P|_{\Nbh}$ is 1-1.

Since both $\ShAV$ and $\ShBeV$ are $\contWW{r}{s}$-open and locally injective with respect to $\Wr{0}$ topologies of $\funcAV$ and $\funcBeV$ respectively, there exists a $\Wr{r}$-neigh\-bourhood $\Mbh$ of $\afunc$ in $\funcAV$ such that $\Nbh=\ShAV(\Mbh)$ is a $\Wr{s}$-neigh\-bourhood of $\amap$ in $\imShAV$ and the restrictions of $\ShAV$ and $\ShBeV$ to $\Mbh$ are 1-1.
Then it follows from~\eqref{equ:ShBeAP} that $P|_{\Nbh}$ is 1-1 as well.

(3) Let $\Mbh \subset \funcAV$ be a $\Wr{r}$-open subset and $\afunc\in\Mbh$.
We will show that there exists a $\Wr{s}$-neighbourhood $\Mbh'\subset\Mbh$ of $\afunc$ such that $\ShAV(\Mbh')$ is $\Wr{t}$-open in $\imShAV$.
Since $\afunc\in\Mbh$ is arbitrary, we will obtain that $\ShAV(\Mbh)$ is $\Wr{t}$-open in $\imShAV$.

It follows from $\contWW{r}{s}$-openness of $\ShBeV|_{\Mbh}$ and $\contWW{s}{s}$-continuity of $P$ that the set $P^{-1}\circ\ShBeV(\Mbh)$ is a $\Wr{s}$-open neighbourhood of $\ShAV(\afunc)$ in $\imShAV$.
Moreover, since $P$ is locally injective with respect to the $\Wr{s}$ topology of $\imShAV$, there exists a $\Wr{s}$-neighbourhood $\Nbh\subset P^{-1}\circ\ShBeV(\Mbh)$ of $\ShAV(\afunc)$ such that the restriction $P|_{\Nbh}:\Nbh\to \imShBeV$ is injective.

Denote $\Mbh'=\Mbh\cap\ShAV^{-1}(\Nbh)$. 
We claim that
$$
\ShAV(\Mbh') = P^{-1}\circ\ShBeV(\Mbh')\cap\Nbh,
$$
whence $\ShAV(\Mbh')$ will be $\Wr{t}$-open in $\imShAV$.
Indeed,
$$
P^{-1}\circ\ShBeV(\Mbh')\cap\Nbh \stackrel{\eqref{equ:ShBeAP}}{=\!=\!=}P^{-1}\circ P \circ \ShAV(\Mbh')\cap\Nbh \stackrel{\text{injectivity of $P|_{\Nbh}$}}{=\!=\!=\!=\!=\!=\!=\!=\!=\!=\!=} \ShAV(\Mbh').
$$

(4) Suppose that both $\ShBeV$ and $\ShAV$ are not injective.
We will show that for some $m>0$ there exists a free $P$-equivariant action of the group $\ZZZ_{m}$ on $\imShAV$ such that $P(\amap)=P(\bmap)$ iff $\amap,\bmap$ belongs to the same $\ZZZ_{m}$-orbit.
This will give us a decomposition $P:\imShAV \xrightarrow{q} \imShAV/\ZZZ_{m} \xrightarrow{\sigma} \imShBeV$ in which $q$ is a covering map, and $\sigma$ is a bijection with the image $P(\imShAV) \subset \imShBeV$.
Then $q$ will be locally injective with respect to the $\Wr{0}$ topology since $\ZZZ_{m}$ is a finite group. 
Hence so will be $P$.

By Lemma~\ref{lm:Shift-map-prop}(5b) $\funcBeV=\funcAV=\Ci{\Vman}{\RRR}$, $\ZidBeV=\{n{\bar\eta}\}_{n\in\ZZZ}$, and $\ZidAV=\{n{\eta}\}_{n\in\ZZZ}$ for some smooth functions ${\bar\eta},{\eta}:\Mman\times\Nman\to(0,\infty)$ such that
$$
\BeFlow(x,y,t+{\bar\eta}(x,y))\equiv\BeFlow(x,y,t),
\qquad
\AFlow(x,y,t+{\eta}(x,y)) \equiv \AFlow(x,y,t).
$$
In other words
\begin{equation}\label{equ:nu_phi_in_ZidBeV}
\begin{array}{ll}
(\BFlow(x,t+{\bar\eta}), y) = (\BFlow(x,t), y), \\ 
(\BFlow(x,t+{\eta}), \mathbf{H}(x,y,t+{\eta})) = (\BFlow(x,t), \mathbf{H}(x,y,t)).
\end{array}
\end{equation}
It follows that
$\BeFlow(x,y,t+{\eta}) = 
(\BFlow(x,t+{\eta}),y) = (\BFlow(x,t),y)= 
\BeFlow(x,y,t),
$
whence ${\eta}\in\ZidBeV$, i.e. ${\eta}=m{\bar\eta}$ for some $m\in\ZZZ$.

In particular, by (2) of Lemma~\ref{lm:Shift-map-prop}, ${\bar\eta}$ is constant along orbits of $\AFld$.
Since for every $\afunc\in\funcAV$ the map $\ShAV(\afunc)$ preserves every orbit of $\AFld$, we have that ${\bar\eta}\circ \ShAV(\afunc) = {\bar\eta}$, whence
$$
\ShAV({\bar\eta}) \circ \ShAV(\afunc) \stackrel{\text{\cite[Eq.~(8)]{Maks:TA:2003}}}{=\!=\!=\!=\!=\!=\!=} \ShAV(\afunc + {\bar\eta}\circ \ShAV(\afunc))  = 
\ShAV(\afunc + {\bar\eta}).
$$
Then we can define an action $*$ of the group $\ZZZ_m$ on $\imShAV$ by
$$
k * \amap = \ShAV(k{\bar\eta}) \circ \amap, \qquad k\in\ZZZ_m,\; \amap\in\imShAV.
$$
Evidently this action is free and by~\eqref{equ:nu_phi_in_ZidBeV} is equivariant with respect to $P$.

Moreover, let $\amap=\ShAV(\afunc)$, $\bmap=\ShAV(\bfunc)$ for some $\afunc,\bfunc\in\funcAV$ and suppose that $P(\amap)=P(\bmap)$, i.e. $\ShBeV(\afunc)=\ShBeV(\bfunc)$.
Then it follows from (3) of Lemma~\ref{lm:Shift-map-prop} that $\afunc-\bfunc=k{\bar\eta}$ for some $k\in\ZZZ$, whence $\amap = \ShAV(k{\bar\eta}) \circ \bmap$.
Moreover, since ${\eta} = m{\bar\eta}$, we may take $k$ modulo $m$.
In other words, $P(\amap)=P(\bmap)$ iff $\amap= k *\bmap$ for some $k\in\ZZZ_{m}$.
\end{proof}

\section{Vector fields of types $\typeLinear$ and $\typeHamilt$}\label{sect:LHVectFields}
The following lemma shows that property of openness of shift maps near singular points is invariant under reparametrizations.
 
\begin{lemma}\label{lm:reparam_shift_map}{\rm c.f.\;\cite{Maks:reparam-sh-map}.}
Let $\AFld$ be a $\Cont{\infty}$ vector field on $\Mman$, $\nu:\Mman\to(0,+\infty)$ a $\Cont{\infty}$ strictly positive function, and $\BFld=\nu\AFld$.
Let also $\Vman\subset\Mman$ be a $\Dm$-submanifold and $\ShAV$ and $\ShBV$ be shift maps for $\AFld$ and $\BFld$ respectively.
Then $\imShAV=\imShBV$.
Moreover, $\ShAV$ is $\contSS{r}{s}$-open iff so is $\ShBV$.
\end{lemma}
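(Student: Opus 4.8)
The plan is to produce an explicit ``time change'' bijection $\Theta\colon\funcBV\to\funcAV$ which is a homeomorphism in each of the strong $\Sr{r}$ topologies and which intertwines the two shift maps, $\ShBV=\ShAV\circ\Theta$; everything in the lemma then follows formally. Since $\BFld=\nu\AFld$ with $\nu>0$, the two fields have the same oriented orbits, differing only in parametrization. Concretely, for each $x\in\Vman$ let $\rho_x$ be the maximal solution of the scalar ODE $\rho_x'(t)=\nu\bigl(\AFlow(x,\rho_x(t))\bigr)$ with $\rho_x(0)=0$. Smooth dependence of flows (and of ODE solutions) on initial conditions and parameters shows that $\rho(x,t)\defeq\rho_x(t)$ is a well-defined $\Cont{\infty}$ map $\domB\to\domA$ which for each fixed $x$ restricts to an orientation-preserving diffeomorphism of the maximal $\BFld$-orbit interval of $x$ onto the maximal $\AFld$-orbit interval of $x$, and that
$$\BFlow(x,t)=\AFlow\bigl(x,\rho(x,t)\bigr),\qquad\AFlow(x,s)=\BFlow\bigl(x,\rho^{-1}(x,s)\bigr),$$
where $\rho^{-1}$ is the (smooth) fibrewise inverse of $\rho$.

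First I would set $\Theta(\bfunc)(x)=\rho\bigl(x,\bfunc(x)\bigr)$. The equivalence ``$\mathrm{graph}(\bfunc)\subset\domB\iff\mathrm{graph}(\Theta(\bfunc))\subset\domA$'' is immediate from the fact that $\rho_x$ carries orbit intervals onto orbit intervals, so $\Theta$ is a well-defined map $\funcBV\to\funcAV$ with two-sided inverse $\afunc\mapsto\bigl(x\mapsto\rho^{-1}(x,\afunc(x))\bigr)$. Substituting the first displayed identity gives $\ShBV(\bfunc)(x)=\BFlow(x,\bfunc(x))=\AFlow(x,\rho(x,\bfunc(x)))=\ShAV(\Theta(\bfunc))(x)$, i.e.\ $\ShBV=\ShAV\circ\Theta$ and, symmetrically, $\ShAV=\ShBV\circ\Theta^{-1}$. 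In particular $\imShBV=\ShAV(\Theta(\funcBV))=\ShAV(\funcAV)=\imShAV$ as subsets of $\Ci{\Vman}{\Mman}$, which proves the first assertion and shows that the $\Sr{s}$ topologies on $\imShAV$ and $\imShBV$ literally coincide.

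The one technical point is that $\Theta$ and $\Theta^{-1}$ are $\contSS{r}{r}$-continuous for every $r\in\Nzi$, hence $\Sr{r}$-homeomorphisms. But $\Theta$ has exactly the form $\bfunc\mapsto\bigl(x\mapsto\Phi(x,\bfunc(x))\bigr)$ of the shift map itself, with the $\Cont{\infty}$ ``kernel'' $\AFlow$ replaced by the $\Cont{\infty}$ map $\rho$; thus the continuity estimates establishing $\contSS{r}{r}$-continuity of shift maps in Lemma~\ref{lm:Shift-map-prop}(1) (i.e.\ \cite[Lm.~2]{Maks:TA:2003}) apply verbatim to $\Theta$, and applying them to $\rho^{-1}$ gives the same for $\Theta^{-1}$. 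When $\Vman$ is a $\Dm$-submanifold with corners one invokes, as elsewhere in the paper, the characterization of $\Cont{r}$ maps on $\Vman$ by continuous extension of derivatives to $\partial\Vman'$ recalled above.

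Finally, openness transfers along the homeomorphism $\Theta$: if $\ShAV$ is $\contSS{r}{s}$-open and $U\subset\funcBV$ is $\Sr{r}$-open, then $\Theta(U)$ is $\Sr{r}$-open in $\funcAV$, hence $\ShBV(U)=\ShAV(\Theta(U))$ is $\Sr{s}$-open in $\imShAV=\imShBV$; so $\ShBV$ is $\contSS{r}{s}$-open, and the converse is the mirror argument with $(\AFld,\Theta)$ and $(\BFld,\Theta^{-1})$ interchanged. The only real obstacle is the bookkeeping of the third paragraph — checking that the smooth-kernel substitution operator $\Theta$ is a strong-topology homeomorphism — and that is essentially already carried out in \cite[Lm.~2]{Maks:TA:2003}; the rest is formal.
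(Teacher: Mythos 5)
Your proof is correct and takes essentially the same route as the paper: the paper sets $\bfunc(x,s)=\int_0^s\nu(\BFlow(x,\tau))\,d\tau$, which is exactly your $\rho(x,s)$ (both solve $\partial_s\rho=\nu(\BFlow(x,s))$ with $\rho(x,0)=0$), defines $\zeta(\bmap)(x)=\bfunc(x,\bmap(x))$, i.e.\ your $\Theta$, and concludes from $\ShBV=\ShAV\circ\zeta$ and $\zeta$ being a homeomorphism. Your write-up is if anything a bit more careful than the paper's: you explicitly justify $\contSS{r}{r}$-continuity of $\Theta$ and $\Theta^{-1}$ by noting they are substitution operators with a smooth kernel to which the argument of Lemma~\ref{lm:Shift-map-prop}(1) applies verbatim, and you address the corner/boundary point, whereas the paper simply asserts the homeomorphism property (and writes $\Wr{r}$ where, for the general non-compact $\Vman$ of the statement, $\Sr{r}$ is what is needed).
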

\begin{proof}
Define the functions $\afunc:\domA\to\RRR$ and $\bfunc:\domB\to\RRR$ by 
$$
\bfunc(x,s) = \int\limits_{0}^{s}\nu(\BFlow(x,\tau))\,d\tau,
\qquad
\afunc(x,s) = \int\limits_{0}^{s}\frac{d\tau}{\nu(\AFlow(x,\tau))}.
$$
Then it is well-known that for each $\amap\in\funcAV$ and $\bmap\in\funcBV$, see e.g.\cite{Maks:reparam-sh-map},
$$
\BFlow(x,\bmap(x))=\AFlow(x,\bfunc(x,\bmap(x))),
\qquad
\AFlow(x,\amap(x))=\BFlow(x,\afunc(x,\amap(x))),
$$
for all $x\in\Vman$.
Define the following map
$$
\zeta:\funcBV\to\funcAV,
\qquad 
\zeta(\bmap)(x)=\bfunc(x,\bmap(x)).
$$
Then $\zeta$ is a homeomorphism with respect to topologies $\Wr{r}$ for all $r$, and its inverse is given by $\zeta^{-1}(\amap)(x)=\afunc(x,\amap(x))$.
Moreover $\ShBV=\ShAV\circ\zeta$.
Hence $\ShAV$ is $\contSS{r}{s}$-open iff so is $\ShBV$.
\end{proof}

Thus the study of vector fields of types $\typeLinear$ and $\typeHamilt$ is suffices to consider linear and reduced Hamiltonian vector fields only.

\begin{lemma}\label{lm:Ham_vf}{\rm~\cite{Maks:CEJM:2009, Maks:hamv2}.}
Let $g:\RRR^2\to\RRR$ be a homogeneous polynomial, $\BFld$ be its reduced Hamiltonian vector field, $\AFld$ be a trivial $n$-extension of $\BFld$ on $\RRR^n\times\RRR^{2}$, and $\Vman$ be a $\Dm$-neighbourhood of $0\in\RRR^{n+2}$.
Then the shift map $\ShAV$ of $\AFld$ is $\contWW{\infty}{\infty}$-open.
If $\deg\BFld\geq2$, i.e. $\BFld$ is of type $\typeHamilt$, then for every regular $n$-extension of $\BFld$ its shift map is $\contWW{\infty}{\infty}$-open as well.
\end{lemma}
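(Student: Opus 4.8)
The plan is to bootstrap from the planar Hamiltonian case — where explicit local inverses of the shift map are available — using the reduction of \S\ref{sect:openness_ShA} and the extension results of \S\ref{sect:regul-ext}. First I would pass to a convenient domain. Since a $\Dm$-neighbourhood of $0$ may be taken to be a closed disk and, in view of Theorem~\ref{th:Sh-open-map}, openness of $\ShAV$ only has to be checked for arbitrarily small $\Vman$, I assume $\Vman\subset\RRR^{n+2}$ is a small connected compact disk centred at $0$. Then $\Wr{r}$ and $\Sr{r}$ coincide on the mapping spaces in play, and by Theorem~\ref{th:sh-open-12} the $\contWW{\infty}{\infty}$-openness of $\ShAV$ amounts to producing a $\contWW{\infty}{\infty}$-continuous local section of $\ShAV$ near the identity inclusion $i_{\Vman}\in\imShAV$.

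For the first assertion (trivial $n$-extension, arbitrary $\deg\BFld$) I would reduce to, and invoke, the computations of \cite{Maks:CEJM:2009, Maks:hamv2}. In the base case $n=0$ one has $\AFld=\BFld$, the reduced Hamiltonian field of $g$ on a small disk in $\RRR^2$; since $dg(\BFld)\equiv0$, the orbits are the connected components of the level sets of $g$ away from $\Fix(\BFld)=\{0\}$, and an explicit local section $\SectShB(\amap)$ near $i_{\Vman}$ is written down — an integral along level curves of $g$, resp.\ an algebraic expression in $g$ and $\amap$ — treating the cases $\typeHamiltNonExtr$ and $\typeHamiltExtr$ separately (and the two linear sub-cases arising when $\deg g=2$). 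For a trivial $n$-extension $\BeFld(x,y)=(\BFld(x),0)$ one has $\BeFlow(x,y,t)=(\BFlow(x,t),y)$, so every $\amap\in\imShBeV$ near $i_{\Vman}$ has the form $\amap(x,y)=(A(x,y),y)$ with $A(\cdot,y)$ a small perturbation of the projection onto $\RRR^2$ on each slice; applying the base-case section slice-wise, sending $\amap$ to $(x,y)\mapsto\SectShB(A(\cdot,y))(x)$, and differentiating the (smooth) explicit formula also in the inert parameters $y$, the continuity estimates carry over, provided the base-case estimates are uniform over the family of slices. This last verification is the technical core; for the linear sub-cases it must be done \emph{without} the erroneous ``division lemma'' \cite[Lm.~32]{Maks:TA:2003}, which is exactly what \S\ref{sect:LHVectFields} accomplishes.

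For the second assertion let $\AFld(x,y)=(\BFld(x),H(x,y))$ be a regular $n$-extension of $\BFld$ with $\deg\BFld\geq2$, and let $\BeFld$ be the trivial $n$-extension of $\BFld$; by the first part $\ShBeV$ is $\contWW{\infty}{\infty}$-open. The key point is that $\ShBeV$ is moreover \emph{injective}: since $\deg\BFld\geq2$, the $1$-jet of $\BFld$ at $0$ vanishes, hence so does the $1$-jet of $\BeFld$ at $0$, and therefore the tangent linear flow $T_0\BeFlow_t$ is the identity; also $\Fix(\BeFld)=\{0\}\times\RRR^n$ has codimension $2$ and hence is nowhere dense in the connected set $\Vman$, so Lemma~\ref{lm:Shift-map-prop}(5a), applied to $\BeFld$ at the fixed point $0$, yields $\ZidBeV=\{0\}$. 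Since also $\FixB=\{0\}$ is nowhere dense in $\RRR^2$, Theorem~\ref{th:reg_ext}(1) applies and shows that $\ShAV$ is $\contWW{\infty}{\infty}$-open, which completes the proof.

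The main obstacle is the one flagged above: proving, in the planar case, that the explicit local inverses of the shift map are $\contWW{\infty}{\infty}$-continuous — controlling \emph{all} derivatives uniformly both near the degenerate singularity and along the (possibly non-closed) orbits — this being precisely the estimate that broke in \cite{Maks:TA:2003}. A secondary point is to make these estimates uniform enough across slices to survive the passage to trivial $n$-extensions. Granting the planar estimates, the remaining steps are a bookkeeping application of the general results of \S\ref{sect:openness_ShA} and \S\ref{sect:regul-ext}.
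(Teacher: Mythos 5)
Your handling of the second assertion coincides with the paper's, and you are in fact slightly more careful: you verify the injectivity hypothesis of Theorem~\ref{th:reg_ext}(1) directly on $\ShBeV$ --- since $j^1\BeFld(0)=0$ the tangent flow $T_0\BeFlow_t$ is the identity, $\Fix(\BeFld)=\{0\}\times\RRR^n$ is nowhere dense, so Lemma~\ref{lm:Shift-map-prop}(5a) gives $\ZidBeV=\{0\}$ --- whereas the paper's sentence (``$\ShAV$ is injective'') leaves the alignment of its $\AFld$ with Theorem~\ref{th:reg_ext}'s $\BeFld$ implicit. That part is fine.

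For the first assertion you take a route the paper deliberately avoids, and it is there that a gap sits. The paper reads the references \cite{Maks:CEJM:2009,Maks:hamv2}, specifically \cite[Th.~11.1]{Maks:CEJM:2009}, as already supplying a $\contWW{\infty}{\infty}$-continuous local section for the shift map of the \emph{trivial $n$-extension} $\BeFld$ near $i_{\Vman}$, and then quotes Theorem~\ref{th:sh-open-12} once. You instead read the references as giving only the planar base case $n=0$ and propose to bootstrap to a section for $\ShBeV$ by applying the planar section $\SectShB$ slice-wise in $y$. As you yourself flag, this requires (i) the planar estimates on \emph{all} derivatives of $\SectShB$ to be uniform over the family of slices, and (ii) control of the derivatives of $y\mapsto\SectShB(A(\cdot,y))(x)$ in the parameter $y$, which is a nonlinear composition and so does not follow formally from continuity of $\SectShB$. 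Neither verification is carried out anywhere in this paper: \S\ref{sect:LHVectFields} proves explicit estimates only for the linear cases (Lemma~\ref{lm:lin_flows}); the Hamiltonian cases are delegated entirely to the references, for the trivial $n$-extension, not just $n=0$. So your proposal replaces a citation by a nontrivial piece of analysis that it then defers, whereas the paper's proof has no such step. A secondary point: Theorem~\ref{th:Sh-open-map} is a fragmentation statement about the \emph{global} map $\ShA$; it does not by itself justify shrinking a given $\Vman$ in a local openness claim about $\ShAV$.
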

\begin{proof}
It follows from~\cite{Maks:CEJM:2009, Maks:hamv2} that the shift map $\ShBeV$ of $\BeFld$ has a $\contWW{\infty}{\infty}$-continuous local section on some neighbourhood of the identity inclusion $i_{\Vman}:\Vman\subset\RRR^{n+2}$, see also~\cite[Th.~11.1]{Maks:CEJM:2009}. 
By Theorem~\ref{th:sh-open-12} this implies $\contWW{\infty}{\infty}$-openness of $\ShBV$.

Moreover, if $\deg\BFld\geq2$, i.e. the linear part of $\AFld$ at $0$ vanishes, then by Lemma~\ref{lm:Shift-map-prop} $\ShAV$ is injective.
Hence by (1) of Theorem~\ref{th:reg_ext} the shift map of $\BFld$ is $\contWW{\infty}{\infty}$-open as well.
\end{proof}

\begin{lemma}\label{lm:lin_flows}
Let $B$ be a non-zero $(k\times k)$-matrix, $\BFld(y) = By$ be the corresponding linear vector field on $\RRR^{k}$, $\BFlow(y,t) = e^{Bt}y$ be its flow, and $\AFld$ be a vector field on $\RRR^{n+k}$ being a regular (possibly trivial) $n$-extension of $\BFld$ with respect to the origins of $\RRR^{n+k}$ and $\RRR^{k}$ as indicated in the third column of the table below.
Let also $\Vman\subset\RRR^{n+k}$ be a $\Dm$-neighbourhood of the origin $0\in\RRR^{n+k}$.
Then the shift map $\ShAV$ of $\AFld$ is $\contWW{r}{s}$-open for the values $r,s$ described in the following table.

\begin{longtable}{|c|c|p{0.44\textwidth}|c|c|}\hline
& type  & $B$  & $\AFld$  &  openness of $\ShAV$ \\ \hline
$1$ & $\typeLinPH$ & $B = \|a\|$,  $a\not=0$, & regular & $\contWW{r}{r+1}$, $r\geq0$ \\  \hline 
$2$ & $\typeLinPH$ & $B = \left\| \begin{smallmatrix} a & -b \\ b & a  \end{smallmatrix} \right\|$, $a,b\not=0$ & regular & $\contWW{r}{r+2}$, $r\geq0$  \\ \hline 
$3$ & $\typeLinNilp$ & $B = \left\| \begin{smallmatrix} 0 & 1 \\ 0 & 0  \end{smallmatrix} \right\|$,  & regular & $\contWW{r}{r+1}$, $r\geq0$  \\ \hline
$4$ & $\typeLinRotExt$ & $B = \left\| \begin{smallmatrix}
 0  & -b &  1 & 0 \\
  b & 0 &  0 & 1 \\
 0  & 0 &  0 & -b \\
 0  & 0 &  b & 0 
\end{smallmatrix}\right\|$, $b\not=0$ & regular & $\contWW{\infty}{\infty}$ \\ \hline
$5$ & $\typeLinRot$ & $B = \left\| \begin{smallmatrix} 0 & -b \\ b & 0  \end{smallmatrix} \right\|$, $b\not=0$ & trivial  & $\contWW{\infty}{\infty}$  
\\ \hline
$6$ &   \multicolumn{3}{c}{$B = 
\left\|\begin{smallmatrix}
0    & -b_1 &        &      &     \\
b_1 & 0   &        &      &     \\
     &     & \ddots &      &     \\
     &     &        & 0    & -b_l \\
     &     &        & b_l & 0    
\end{smallmatrix}\right\|^{\phantom{A}}_{\phantom{A_1}}$\!\!,  \ $l\geq 2$}  & \\ \cline{1-5}
$6a$ & $\typeLinRot$ & {\rm (a)}~$b_j \tau \in \ZZZ$ for some $\tau>0$ and all $j=1,\ldots,l$ (periodic case) &  trivial & $\contWW{\infty}{\infty}$ \\ \cline{1-5}
$6b$ & $\typeLinNonRot$ & {\rm (b)}~otherwise  (non-periodic case) &  $-$  & $-$ \\ \hline
\end{longtable}
\end{lemma}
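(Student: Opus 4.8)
The plan is to verify the table row by row, reducing each case to Theorem~\ref{th:reg_ext} and Theorem~\ref{th:sh-open-12}, together with explicit local sections of the shift map near the identity inclusion. Since by Lemma~\ref{lm:reparam_shift_map} openness is invariant under reparametrization and the vector fields in the table are honest linear fields, it suffices to produce, for each $B$, a formula for the local inverse $\SectShB$ of the shift map $\ShBeV$ of the trivial $n$-extension $\BeFld$ near $i_{\Vman}$ and estimate its $\contSS{s}{r}$-continuity; openness of $\ShAV$ for the regular extension $\AFld$ then follows from Theorem~\ref{th:reg_ext}(1) when $\ShBeV$ is injective, and from Theorem~\ref{th:reg_ext}(3)--(4) in the periodic (non-injective) cases. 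Because $\Vman$ is compact, weak and strong topologies coincide, so $\contWW{}{}$ and $\contSS{}{}$ are interchangeable here.

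First I would treat the hyperbolic-type rows $1$, $2$, $3$ (types $\typeLinPH$, $\typeLinNilp$). Here every orbit in a punctured neighbourhood of $0$ is non-recurrent and the shift map is injective by Lemma~\ref{lm:Shift-map-prop}(5a), since the tangent linear flow at $0$ is nontrivial but $\ZidBeV=\{0\}$. For $B=\|a\|$ the flow is $\BFlow(y,t)=e^{at}y$, and given $\amap$ near $i_{\Vman}$ one recovers the shift function pointwise as $\SectShB(\amap)(y)=\frac{1}{a}\log(\amap_y/y)$ on the regular locus, extended across $\{y=0\}$; the loss of one derivative comes precisely from dividing by $y$ (this is where the erroneous ``division lemma'' \cite[Lm.~32]{Maks:TA:2003} was invoked, and the point of \S\ref{sect:LHVectFields} is to replace it by a direct Hadamard-type argument yielding the $\contSS{r}{r+1}$ estimate). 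The planar rotation-with-expansion case (row $2$) and the nilpotent case (row $3$) are handled the same way, with the derivative loss being $2$ resp.\ $1$ because one must solve for $t$ from a system of two coordinates or from a polynomial-in-$t$ relation. In each of these rows injectivity of $\ShBeV$ lets me invoke Theorem~\ref{th:reg_ext}(1) to pass to the regular extension $\AFld$ with the same $(r,s)$.

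Next come the oscillatory rows $4$, $5$, $6a$. Row~$5$ (pure rotation, $B=\left\|\begin{smallmatrix}0&-b\\ b&0\end{smallmatrix}\right\|$) and row~$6a$ (periodic block-diagonal rotation, $b_j\tau\in\ZZZ$) are periodic flows: $\ZidBeV=\{n\bar\eta\}_{n\in\ZZZ}$ with $\bar\eta$ constant, so $\ShBeV$ is not injective, but on a small neighbourhood of $i_{\Vman}$ there is still a well-defined continuous branch of the shift function (choosing the value of the angle closest to $0$), and this branch has \emph{no} derivative loss since it is obtained by composing $\amap$ with the smooth multivalued $\arg$ away from its branch points, which are avoided near the identity; hence $\contSS{\infty}{\infty}$-openness of $\ShBeV$, and then Theorem~\ref{th:reg_ext}(4) (for trivial extensions both $\ShBeV$ and $\ShAV$ fail to be injective) upgrades this to $\contSS{\infty}{\infty}$-openness of $\ShAV$. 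Row~$4$, the $\typeLinRotExt$ block, is the subtle one among these: the flow on $\RRR^4$ here is $\BFlow(y,t)=e^{Bt}y$ where $e^{Bt}$ mixes the two rotation planes with a $t e^{B_0 t}$ nilpotent-type term, so although orbits are non-periodic the first-return geometry is still ``rotational''; one must show that the section formula — again built from a branch of the angle in the outer plane, with the inner-plane coordinates recovered algebraically — loses no derivatives. This row feeds through Lemma~\ref{lm:Ham_vf}'s mechanism only indirectly; I would instead quote the explicit computation of \cite{Maks:TA:2003} (corrected as in \S\ref{rem:error_cont_div}) that the section is $\contSS{\infty}{\infty}$-continuous, and then use injectivity of $\ShBeV$ (Lemma~\ref{lm:Shift-map-prop}(5a), since $\typeLinRotExt$ has $\ZidBeV=\{0\}$) together with Theorem~\ref{th:reg_ext}(1).

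Finally, row~$6b$: when $B$ is block-diagonal rotation with rationally independent frequencies $b_1,\dots,b_l$ and $l\geq2$, the flow is exactly the periodic-linear-flow counterexample of \S\ref{sect:examples-nontriv-hol}, so by Lemma~\ref{lm:non-res-flow} (its proof works verbatim for the trivial extension, and a regular extension with a non-recurrent fixed point is excluded) the shift map is injective and \emph{not} $\contWW{r}{s}$-open for any $r,s$; this is recorded by the ``$-$'' entries, and there is nothing further to prove. The main obstacle throughout is the precise derivative-count bookkeeping in rows $1$--$3$: showing that the pointwise formulas for the section, which visibly involve division by a coordinate function vanishing on a positive-codimension set, nonetheless extend smoothly with the loss of exactly the stated number of derivatives, and doing so without the flawed \cite[Lm.~32]{Maks:TA:2003}. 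I expect to handle this by writing the relevant expression as $\amap(y)-y = y\cdot(\text{smooth})$ via the Hadamard lemma along orbits and then inverting the resulting relation, tracking how each application of the implicit function theorem in the time variable consumes one derivative.
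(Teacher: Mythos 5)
Rows $1$--$3$ and $6b$ in your proposal match the paper's route: explicit integral formulas via the Hadamard lemma for the trivial extension, injectivity because orbits near $0$ are non-closed, and then Theorem~\ref{th:reg_ext}(1) to pass to regular extensions (and for $6b$ the counterexample from Lemma~\ref{lm:non-res-flow}).

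The serious gap is in rows $5$ and $6a$. Your claim that the local section ``is obtained by composing $\amap$ with the smooth multivalued $\arg$ away from its branch points, which are avoided near the identity'' is false: the branch point of $\arg$ is the origin of $\RRR^{2}$, which is the fixed point and lies in $\Vman$ for \emph{every} map near $i_{\Vman}$. The shift function must be recovered on all of $\Vman$, including arbitrarily close to $0$, and there the pointwise angle formula degenerates. This is exactly the phenomenon that made the complex-case ``division lemma'' of~\cite[Lm.~32]{Maks:TA:2003} wrong (Remark~\ref{rem:error_cont_div}), and the paper deliberately avoids any argument of this form: row~$5$ is handled by observing that $\BFld_5$ is the \emph{Hamiltonian} vector field of $g(x,y)=\tfrac{b}{2}(x^2+y^2)$ and invoking Lemma~\ref{lm:Ham_vf}, whose proof rests on the $\contWW{\infty}{\infty}$-continuous section built in~\cite{Maks:CEJM:2009, Maks:hamv2}. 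A direct ``choose the nearest branch of $\arg$'' argument would only give continuity away from the fixed point and does not establish the $\Cont{\infty}$ estimate at $0$.

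Row~$4$ is also treated differently, and I think your version does not go through. You propose to ``quote the explicit computation of~\cite{Maks:TA:2003}'' for the $\typeLinRotExt$ block; no such computation exists in that reference (the block is a new admissible singularity type in the present paper). The paper's actual argument is structural: it writes $\BFld$ as a regular $2$-extension of $\BFld_5$, so a trivial $n$-extension $\AFld$ of $\BFld$ becomes a regular $(n+2)$-extension of $\BFld_5$; it takes $\BeFld$ to be the \emph{trivial} $(n+2)$-extension of $\BFld_5$ (for which row~$5$ applies), and then invokes Theorem~\ref{th:reg_ext}(3). The crucial missing step in your sketch is the local injectivity of the comparison map $P:\imShAV\to\imShBeV$ with respect to the $\Wr{1}$ topology, which the paper establishes by the explicit norm estimate $\|\bmapB(\bfunc)\|^1_{\Vman}\geq 1$ and a triangle inequality showing that two $\theta$-translates of a shift function cannot both lie in an $|\theta|/4$-ball. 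Without some version of this estimate, Theorem~\ref{th:reg_ext}(3) cannot be applied, and Theorem~\ref{th:reg_ext}(1) alone does not suffice because you must first settle the trivial $n$-extension of $B$ itself, which is not injective over $\BFld_5$ in the obvious sense and genuinely requires the finer analysis.
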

\begin{proof}
In the cases 1-3 we will first suppose that $\AFld$ is a trivial $n$-extension of $\BFld$.
Then $\AFld$ is linear and is generated by the following matrix $\left\| \begin{smallmatrix} 0_n & 0 \\ 0 & B  \end{smallmatrix} \right\|$, where $0_n$ is the zero $(n\times n)$-matrix.
Let $\tau$ denote the coordinates in $\RRR^n$.

{\bf Case 1.}
In this case $\AFld(\tau,x)=a x\frac{\partial}{\partial x}$, $(a\not=0)$, $\AFld$ generates the following global flow $\AFlow(\tau,x,t)=(\tau,x e^{at})$ on $\RRR^{n+1}$,  $\ShAV$ is injective (because $\AFld$ has non-closed orbits), its image $\imShAV$ consists of maps $\amap=(\amap_1,\amap_2)\in\Ci{\Vman}{\RRR^{n}\times\RRR}$ satisfying the following conditions:
$$
\amap_1\equiv\tau, \quad
\amap_2(\tau,0)=0, \quad
\frac{\partial \amap_2(\tau,0)}{\partial x}>0,  \quad
x\amap_2(\tau,x)>0 \ (\forall x\not=0),
$$
and the inverse mapping $\ShAV^{-1}:\imShAV \to \Ci{\Vman}{\RRR}$ is given by
\begin{equation}\label{equ:shift_linear_ax}
\ShAV^{-1}(\amap)(\tau,x) = \frac{1}{a} \ln \frac{\amap_2(\tau,x)}{x} =
\frac{1}{a} \ln \int_{0}^{1} \frac{\partial \amap_2(\tau,tx)}{\partial x} dt,
\end{equation}
see~\cite[Eq.~(23) \& (27)]{Maks:TA:2003}. 
Hence $\ShAV^{-1}$ is $\contWW{r+1}{r}$-continuous for all $r\geq0$, whence $\ShAV$ is $\contWW{r}{r+1}$-open.

{\bf Case 2.}
We will regard $\RRR^{n+2}$ as $\RRR^n\times\CCC$.
Since $a\not=0$, we have that $\AFld$ has non-closed orbits, whence again its shift map $\ShAV$ is injective but now the image $\imShAV$ can not be described so simply as in the previous cases.
Let $\afunc\in\Ci{\Vman}{\RRR}$ and $\amap=(\amap_1,\amap_2)=\ShAV(\afunc) \in  \Ci{\Vman}{\RRR^{n}\times\CCC}$.
Then $\amap_1\equiv\tau$.
Notice that we can define complex conjugate $\bar\amap_2$ and its partial derivatives $\frac{\partial \bar \amap_2}{\partial z}$ and $\frac{\partial \bar \amap_2}{\partial \bar z}$ in $z$ and $\bar z$ in a usual way.
Then it follows from~\cite[Eq.~(29) \& Lm.~34]{Maks:TA:2003} that
$$
\afunc(\tau,z) = 
\frac{1}{2a}\ln \frac{\mathrm{Im}(\omega\;\amap_2\; d\bar\amap_2(\bar\omega))}{\mathrm{Im}(\omega\; z \;\bar \omega)} =
\frac{1}{2a}\ln \frac{Im\bigl( \omega \cdot\amap_2 \cdot(\frac{\partial \bar \amap_2}{\partial z} \omega + \frac{\partial \bar \amap_2}{\partial \bar z} \bar\omega) \bigr) }{y (a^2+b^2)}
$$
and that the numerator of the last fraction is equal to zero, when $y=0$.
It follows from this formula and the Hadamard lemma that the expression of $\afunc$ via $\amap$ contains partial derivatives of $\amap$ up to order $2$.
Hence $\ShAV^{-1}$ is $\contWW{r+2}{r}$-continuous for all $r\geq0$.

{\bf Case 3.}
Now $\AFld(\tau,x,y) = y\frac{\partial}{\partial x}$ for $(\tau,x,y)\in\RRR^{n+2}$ and it generates the following flow $\AFlow(\tau,x,y,t) = (\tau,x+yt, y)$.
Then $\ShAV$ is again injective (since $\AFld$ has non-closed orbits), its image $\imShAV$ consists of mappings $\amap=(\amap_1,\amap_2,\amap_3)\in\Ci{\Vman}{\RRR^{n}\times\RRR\times\RRR}$ such that 
$$
\amap_1\equiv\tau,  \qquad
\amap_2(\tau,x,0)=0, \qquad
\amap_3\equiv y,
$$
and the inverse map $\ShAV^{-1}:\imShAV\to \Ci{\Vman}{\RRR}$ is given by
\begin{equation}\label{equ:shift_linear_nilp}
\ShAV^{-1}(\amap) = \frac{\amap_2(\tau,x,y)-x}{y} = 
 \int_{0}^{1} \frac{\partial \amap_2(\tau,x,ty)}{\partial y} dt,
\end{equation}
see~\cite[Eq.~(26)]{Maks:TA:2003}.
Hence $\ShAV^{-1}$ is $\contWW{r+1}{r}$-continuous for all $r\geq0$.

\medskip

Since in the cases 1-3 $\ShAV$ is injective, it follows from (1) of Theorem~\ref{th:reg_ext} that the same estimations of continuity of $\ShAV^{-1}$ hold for regular extensions of $\BFld$.

Notice that in the remaining cases 4-6 $\AFld$ is a regular $n$-extension of the linear vector field 
$\BFld_5=-by\frac{\partial }{\partial x}+ bx \frac{\partial }{\partial y}$ defined by the matrix $B_5=\left\|\begin{smallmatrix}0&-b\\ b&0\end{smallmatrix}\right\|$.
In fact, it is easy to see that this vector field is the Hamiltonian vector field of the homogeneous polynomial  $g(x,y)=\frac{b}{2}(x^2+y^2)$.
Also notice that the shift map $\ShAV$ of any \myemph{trivial} extension $\AFld$ of $\BFld_5$ is not injective and its kernel $\ZidAV$ consists of integer multiples of constant function $\frac{2\pi}{b}$.

{\bf Case 5.}
It follows from Lemma~\ref{lm:Ham_vf} that local inverses of $\ShAV$ are $\contWW{\infty}{\infty}$-continuous.
An independent proof of $\contWW{\infty}{\infty}$-continuouity of $\ShAV$ for this case is also given in\;\cite{Maks:sym-nondeg-topcenter} under more general settings.
Notice that in this case we claim nothing about openness of shift maps of regular extensions $\AFld$ of $\BFld_5$, since due to Theorem~\ref{th:reg_ext} it is necessary to have additional information about $\AFld$.

{\bf Case 4.}
We will regard $\RRR^4$ as $\CCC^2$.
Suppose at first that $\AFld$ is a trivial $n$-extension of $\BFld$.
Since $\BFld$ is a regular $2$-extension of $\BFld_5$, we see that $\AFld$ is a regular $(n+2)$-extension of $\BFld_5$.
Let also $\BeFld$ be a trivial $(n+2)$-extension of $\BFld_5$.
Thus $\AFld$ and $\BeFld$ are defined on $\RRR^n\times\CCC\times\CCC$ and generate the following global flows:
$$
\AFlow(\tau,z_1,z_2,t) =  (\tau, e^{ibt}(z_1+tz_2), e^{ibt}z_2),
\quad
\BeFlow(\tau,z_1,z_2,t) =  (\tau,z_1, e^{ibt}z_2).
$$
Denote $\theta=\frac{2\pi}{b}$ and let  $\Vman\subset\RRR^n\times\CCC^2$ be a $\Dm$-neighbourhood of $0\in\RRR^{n}\times\CCC^2$.
For every $\afunc\in\Ci{\Vman}{\RRR}$ put
$$
\amapB(\afunc)(\tau,z_1,z_2)=e^{ib \,\afunc(\tau,z_1,z_2)}\,z_1,
\qquad 
\bmapB(\afunc)(\tau,z_1,z_2)=e^{ib \,\afunc(\tau,z_1,z_2)}\,z_2,
$$
Then the shift maps $\ShAV$ and $\ShBeV$ of $\AFld$ and $\BFld$ respectively are given by
\begin{equation}\label{equ:ShAV_ShBeV}
\ShAV(\afunc) = \left(\tau,\amapB(\afunc)+ \afunc\cdot \bmapB(\afunc), \; \bmapB(\afunc)\right),
\qquad
\ShBeV(\afunc) = (\tau,z_1, \bmapB(\afunc)).
\end{equation}

Similarly to~\eqref{equ:ShBeAP} define $P:\imShAV\to\imShBeV$ by the following rule: if $\cmap=(\tau,\amap,\bmap)\in\imShAV \subset \Ci{\Vman}{\RRR^{n}\times\CCC\times\CCC}$, then $P(\cmap)(\tau,z_1,z_2)=(\tau,z_1,\bmap(z_1,z_2))$.
Evidently, $\ShBeV=P\circ\ShAV$.

By the case 5 the shift map $\ShBeV:\Ci{\Vman}{\RRR}\to\imShBeV$ is $\contWW{\infty}{\infty}$-open.
We claim that so is $\ShAV:\Ci{\Vman}{\RRR}\to\imShAV$.
Since $\BFld$ has non-closed orbits, $\ShBeV$ is always injective, whence it will follow from (1) of Theorem~\ref{th:reg_ext} that the shift map of any regular extension of $\BFld$ is  $\contWW{\infty}{\infty}$-open as well.

By (3) of Theorem~\ref{th:reg_ext} it suffices to show that $P$ is locally injective with respect to the $\Wr{1}$ topology.
Evidently, $\ShBV^{-1}\circ\ShBV(\afunc) = \{ \afunc + \theta\, n\}_{n\in\ZZZ}$, whence we obtain from~\eqref{equ:ShAV_ShBeV} that
$$
P^{-1} \circ \ShBeV(\afunc) = 
\left\{ \ShAV(\afunc) + \left(0,\theta\,n \cdot \bmapB(\afunc), 0\right)  \ | \ n\in\ZZZ	\right\}.
$$
Now, define the following $C^{1}_{W}$-neighbourhood of $\ShAV(\afunc)$ in $\imShAV$:
$$\Nbh = \{\amap\in\imShAV \ | \ \|\amap - \ShAV(\afunc) \|^1_{V} < |\theta|/4 \}.$$
We claim that the restriction $P|_{\Nbh}$ is 1-1.

Since $\ShAV$ is injective map, it suffices to establish that whenever both $\ShAV(\bfunc)$ and $\ShAV(\bfunc+\theta\,n)$ belong to $\Nbh$ for some $\bfunc\in \Ci{\Vman}{\RRR}$, then $n=0$.
Notice that
$$
\|\bmapB(\bfunc)\|_{\Vman}^{1} \geq \bigl|\bmapB(\bfunc)'_{z_2}\bigr|_{(z_1,z_2)=0} = 
\left| e^{ib\,\bfunc} \, \left(i b\, z_2\, \bfunc'_{z_2} + 1 \right) \right|_{(z_1,z_2)=0} = 1.
$$
Then 
\begin{multline*}
\left\|\ShAV(\bfunc) - \ShAV\left(\bfunc+\theta\,n\right) \right\|^{1}_{\Vman} \leq
\left\|\ShAV(\bfunc) - \ShAV(\afunc) \right\|^{1}_{\Vman} + \\ +
\left\|\ShAV\left(\bfunc+\theta\,n\right) - \ShAV(\afunc) \right\|^{1}_{\Vman} \leq |\theta|/2.
\end{multline*}
On the other hand,
$$
\|\ShAV(\bfunc) - \ShAV(\bfunc+\theta\,n) \|^{1}_{\Vman} = 
\|(0, \theta\,n\, \bmapB(\bfunc),0 ) \|^{1}_{\Vman}  \geq |\theta\,n|.
$$
Hence $n=0$.

{\bf Case 6a.}
In this case $\AFld$ is a regular extension of $\BFld_5$.
Since the flow $\AFlow$ is periodic, we have that the shift maps of $\AFld$ and $\BFld_5$ are not injective and by the case 5 the shift map of $\BFld_5$ is $\contWW{\infty}{\infty}$-open.
Then by (4) of Theorem~\ref{th:reg_ext} so is the shift map of $\AFld$.

{\bf Case 6b.}
In this case $\ShAV$ is injective, but as it is shown in Lemma~\ref{lm:non-res-flow} its inverse map is not even $\contWW{\infty}{\infty}$-continuous.

Lemma~\ref{lm:lin_flows} is completed.
\end{proof}

\begin{remark}\label{rem:error_cont_div} \rm
Incorrect estimations of continuity of local inverses of $\ShAV$ given in~\cite[pages 199-200]{Maks:TA:2003} were bases on at the following ``division lemma'', which was wrongly formulated\footnote{In~\cite[Lm.~32]{Maks:TA:2003} it was claimed that $Z^{-1}$ is $\contWW{r}{r}$-continuous for all $r\in\Nzi$.
But the latter inequality in the proof of~\cite[Lm.~32]{Maks:TA:2003} actually shows $\contWW{r}{r}$-continuity of $Z$ but not of its inverse.} in~\cite{Maks:TA:2003}.
\begin{lemma}\label{lm:Zinv}{\rm c.f.~\cite[Lm.~32]{Maks:TA:2003}.}
Let $\FFF$ be either $\RRR$ or $\CCC$, $\Vman\subset\FFF$ be a $\Dm$-submanifold, and $Z:\Ci{\Vman}{\FFF}\to \Ci{\Vman}{\FFF}$ be a map defined by the formula: $Z(\afunc)(x)=x \cdot \afunc(x)$.
If $\FFF=\RRR$ then the inverse map $Z^{-1}:\im Z \to \Ci{\Vman}{\FFF}$ is $\contWW{r+1}{r}$-continuous for all $r\geq0$. 
If $\FFF=\CCC$, then $Z^{-1}$ is only $\contWW{\infty}{\infty}$-continuous.
\end{lemma}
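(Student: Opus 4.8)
The plan is to localize the problem near $0$ and then treat the two cases $\FFF=\RRR$ and $\FFF=\CCC$ separately. First I would note that $Z$ is injective: if $0\in\Vman$ then $\Vman\setminus\{0\}$ is dense in $\Vman$ and there $\afunc=Z(\afunc)/x$ is determined, whence $\afunc$ is determined on all of $\Vman$; and if $0\notin\Vman$ then the coordinate $x$ is a nowhere vanishing smooth function on $\Vman$, so $Z$ is multiplication by it and a $\contWW{r}{r}$-homeomorphism for every $r$, with nothing to prove. Assume henceforth $0\in\Vman$. Since $Z^{-1}\colon\im Z\to\Ci{\Vman}{\FFF}$ is $\RRR$-linear, $\contWW{a}{b}$-continuity is equivalent to a seminorm bound $\|Z^{-1}(g)\|^{b}_{K}\le C\,\|g\|^{a}_{K'}$ for each compact $K\subset\Vman$, a suitable larger compact $K'$, and all $g\in\im Z$. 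Choosing a partition of unity subordinate to a small neighbourhood $\Uman$ of $0$ and to $\Vman\setminus\{0\}$, I would split $Z^{-1}(g)$ accordingly; on the second piece $Z^{-1}(g)=g/x$ is multiplication by the fixed smooth function $1/x$, hence $\contWW{r}{r}$-continuous, so everything reduces to the behaviour on $\Uman$, which — after shrinking, and (if $0\in\partial\Vman$) straightening the boundary — I may take star-shaped with respect to $0$ (or at least quasiconvex near $0$, which is all the integral estimates below need).

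For $\FFF=\RRR$ the reduced statement is immediate. Then $\im Z=\{g\in\Ci{\Vman}{\RRR}:g(0)=0\}$, and for such $g$ on a star-shaped domain $g(x)=x\int_{0}^{1}g'(tx)\,dt$, so $Z^{-1}(g)(x)=\int_{0}^{1}g'(tx)\,dt$; differentiating under the integral sign gives $(Z^{-1}(g))^{(k)}(x)=\int_{0}^{1}t^{k}g^{(k+1)}(tx)\,dt$, and therefore $\|Z^{-1}(g)\|^{r}_{K}\le\|g\|^{r+1}_{K}$. This is $\contWW{r+1}{r}$-continuity for all $r\ge0$; evaluating on the scaled bumps $g_{\delta}(x)=x\,\psi(x/\delta)$ with $\psi$ a fixed bump and $\delta\to0$ shows the loss of one derivative is unavoidable, so the $\contWW{r}{r}$ originally claimed in~\cite{Maks:TA:2003} indeed fails.

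For $\FFF=\CCC$ there is no complex derivative of $g$ to exploit, and the real Hadamard expansion only gives $g(x)=x\int_{0}^{1}\partial_{x}g(tx)\,dt+\bar x\int_{0}^{1}\partial_{\bar x}g(tx)\,dt$; the antiholomorphic term is the obstruction, since its removal amounts to the operation of division by $\bar x/x$, which is not continuous on any fixed $\Cont{k}$, so there is no single Hadamard-type formula for $Z^{-1}$ with uniformly finite derivative loss. Instead the plan is to estimate $\|Z^{-1}(g)\|^{s}_{K}$ order by order. Fix $s$, set $N=s$, and Taylor-expand $g$ at $0$ as $g=T_{N}g+R_{N}g$. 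Because $g=x\afunc$ with $\afunc=Z^{-1}(g)$ smooth, the $\infty$-jet of $g$ at $0$ is divisible by $x$; hence $T_{N}g=x\,P$ with $P$ a polynomial of degree $\le N-1$ whose coefficients are the numbers $\partial_{x}^{j}\partial_{\bar x}^{k}\afunc(0)=\tfrac{1}{j+1}\partial_{x}^{j+1}\partial_{\bar x}^{k}g(0)$ ($j+k\le N-1$), so $\|P\|^{s}_{K}\le C\,\|g\|^{s}_{K}$. For the remainder, $R_{N}g/x=\afunc-P$ is smooth, $R_{N}g$ vanishes to order $N+1$ at $0$, and Taylor's theorem with integral remainder (valid on the star-shaped, or quasiconvex, piece) gives $|\partial^{\beta}R_{N}g(x)|\le C|x|^{N+1-|\beta|}\|g\|^{N+1}_{K'}$ for $|\beta|\le N$; combining this by the Leibniz rule with the elementary bound $|\partial^{\gamma}(1/x)(x)|\le C|x|^{-|\gamma|-1}$ yields $|\partial^{\alpha}(R_{N}g/x)(x)|\le C|x|^{N-|\alpha|}\|g\|^{N+1}_{K'}$ for $|\alpha|\le N$, hence $\|R_{N}g/x\|^{s}_{K}\le C'\,\|g\|^{s+1}_{K'}$. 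Adding the two contributions, $\|Z^{-1}(g)\|^{s}_{K}\le C''\,\|g\|^{s+1}_{K'}$ for every $s$ and $K$, which is exactly $\contWW{\infty}{\infty}$-continuity of $Z^{-1}$.

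The step I expect to be the main obstacle is the last estimate in the complex case: $1/x$, differentiated $|\gamma|$ times, blows up like $|x|^{-|\gamma|-1}$ as $x\to0$, and this is compensated only by the high-order vanishing of the Taylor remainder $R_{N}g$ — which, since $g\in\im Z$, is itself divisible by $x$ — so the two blow-ups must be matched carefully; it is this mechanism, and the absence of one clean dividing identity (witnessed by the antiholomorphic Hadamard term), that makes $\FFF=\CCC$ genuinely harder than $\FFF=\RRR$. The remaining care is bookkeeping for the reductions: verifying that the partition-of-unity splitting, the shrinking and straightening near $0$, and the passage from $K$ to the larger compact $K'$ needed for the integral remainders leave $\im Z$ and the formula for $Z^{-1}$ undisturbed, so that the local estimates near $0$ assemble into the asserted global continuity.
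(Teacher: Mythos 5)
The paper does not actually prove this lemma. It appears inside Remark~\ref{rem:error_cont_div}, and the text only says that the real case ``easily follows from the Hadamard lemma'' (citing Mostow--Shnider) and that the complex case ``is more complicated and can be established by the methods of~\cite{Maks:CEJM:2009,Maks:hamv2} but during the proof of Lemma~\ref{lm:lin_flows} we avoided referring to it.'' So nothing in the paper serves as a proof of the $\FFF=\CCC$ part, and your proposal supplies genuinely new content rather than reproducing a given argument. Your real case is exactly the Hadamard route the paper points to, and the reductions (linearity, partition of unity to localize at $0$, shrinking/straightening to a quasiconvex neighbourhood) are standard and sound.

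Your complex-case argument (subtract the Taylor polynomial $T_Ng=zP$, estimate the remainder $R_Ng$ by vanishing to order $N+1$, and divide by $z$ via Leibniz using $|\partial^\gamma(1/z)|\le C|z|^{-|\gamma|-1}$) is correct, but it actually proves something \emph{stronger} than the statement asserts, and you do not seem to notice. Your final inequality $\|Z^{-1}(g)\|^{s}_{K}\le C(s)\,\|g\|^{s+1}_{K'}$ is exactly a $\contWW{s+1}{s}$-bound for every $s$ --- a fixed loss of one derivative, just as in the real case --- not merely $\contWW{\infty}{\infty}$-continuity. (Your Leibniz step does give this: since $\partial_{\bar z}(1/z)=0$ away from $0$, only the $z$-derivatives of $1/z$ contribute, each costing $|z|^{-(j+1)}$, exactly compensated by the $|z|^{N+1-|\beta|}$ decay of $\partial^\beta R_Ng$.) In view of this, either the lemma's wording ``only $\contWW{\infty}{\infty}$'' is over-cautious --- which is plausible, since over $\CCC$ the image $\im Z=\{g: \partial_{\bar z}^{k}g(0)=0\ \forall k\}$ is an infinite-codimension subspace on which the division behaves much better than generic division by $|z|^2$ does --- or there is a gap you should hunt for before trusting the improvement. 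I believe it is the former, but you should state explicitly that your estimate gives $\contWW{r+1}{r}$ and reconcile it with the lemma's ``only,'' rather than silently downgrading it to $\contWW{\infty}{\infty}$.

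Two small bookkeeping points: in the real case your inequality should read $\|Z^{-1}(g)\|^r_K\le \|g\|^{r+1}_{K'}$ with $K'\supset\operatorname{conv}(\{0\}\cup K)$, not $K'=K$; and your Taylor-polynomial divisibility step implicitly uses that $g\in\im Z$ to write $T_Ng=zP$ with $P=T_{N-1}(Z^{-1}g)$, which is fine but worth stating --- it is precisely here that membership in the infinite-codimension subspace $\im Z$ is used, and it is what separates your argument from the (genuinely unbounded-loss) general division theory of Mostow--Shnider.
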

The case $\FFF=\RRR$ easily follows from the Hadamard lemma, see also~\cite{MostowShnider:TrAMS:1985}.
The case $\FFF=\CCC$ is more complicated and can be established by the methods of~\cite{Maks:CEJM:2009,Maks:hamv2} but during the proof of Lemma~\ref{lm:lin_flows} we avoided referring to it.
\end{remark}

\section{Fragmentation}\label{sect:main-result}
The aim of this section is to repair~\cite[Th.~17]{Maks:TA:2003} by giving a sufficient condition for the shift map $\ShA:\funcA\to\imShA$ to be either a homeomorphism or an infinite cyclic covering map with respect to $\Sr{\infty}$ topologies, see Theorem~\ref{th:Sh-open-map}.

\begin{remark}\label{rem:error-th17}\rm
The error occurred in the third paragraph of~\cite[Th.~17]{Maks:TA:2003}, where it was claimed that 
``\myemph{... the image $\mathcal{N}_i = \varphi_{U_i}(\mathcal{M}'_i)$ is a $C^{r}_{W}$-neighbourhood of $f|_{U_i}$ in $C^{\infty}(U_i,M)$ for all $r\in\mathbb{N}_0$.}''
First of all this phrase contains a misprint: instead of $C^{\infty}(U_i,M)$ the author supposed to be written $\im\varphi_{U_i}$.
But nevertheless the statement that \myemph{$\mathcal{N}_i$ is $C^{r}_{W}$-open in $\im\varphi_{U_i}$} does not follow from the assumptions of~\cite[Th.~17]{Maks:TA:2003} and must be included into the formulation of ``section'' property $(S)^{0}$ of~\cite[Defn.~15]{Maks:TA:2003}.
This is the point which was missed.
\end{remark}
\begin{theorem}\label{th:Sh-open-map}{\rm c.f.~\cite[Th.~17]{Maks:TA:2003}.}
Let $\AFld$ be a vector field on $\Mman$ such that $\FixA$ is nowhere dense.
Suppose that there exists $a\in\Nz$, a function $d:\Nz\to\Nz$, a locally finite cover $\{\Vman_i\}_{i\in\Lambda}$ of $\Mman$ by $\Dm$-submanifolds, and a finite (possibly empty) subset $\Lambda'\subset\Lambda$ such that $\{\Int\Vman_i\}_{i\in\Lambda}$ is also a cover of $\Mman$, and the local shift map 
$$\ShAVi:\funcAVi\to\imShAVi$$
is $\contSS{\infty}{\infty}$-open if $i\in\Lambda'$ and $\contSS{r}{d(r)}$-open for all $r\geq a$ if $i\in\Lambda\setminus\Lambda'$.
Then the shift map $$\ShA:\funcA\to\imShA$$ is $\contSS{\infty}{\infty}$-open.
Moreover, if $\Lambda'=\varnothing$, then $\ShA$ is $\contSS{r}{d(r)}$-open for every $r\geq a$.

Hence if $\ShA:\funcA\to\imShA$ is injective, then it is a homeomorphism with respect to $\Sr{\infty}$ topologies.
Otherwise, $\ShA$ is a $\ZZZ$-covering map. 
\end{theorem}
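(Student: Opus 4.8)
The plan is to deduce the whole theorem from Theorem~\ref{th:sh-open-12}, applied with $\Vman=\Mman$: that theorem says $\ShA$ is $\contSS{\infty}{\infty}$-open exactly when $\ShA$ admits a $\contSS{\infty}{\infty}$-continuous section, sending $\id_{\Mman}\mapsto\zer$, on some $\Sr{\infty}$-open neighbourhood $\ANbh$ of $\id_{\Mman}$ in $\imShA$; and similarly $\ShA$ is $\contSS{r}{d(r)}$-open ($r\geq a$) once it has a $\contSS{d(r)}{r}$-continuous section on a $\Sr{d(r)}$-open neighbourhood of $\id_{\Mman}$. So everything reduces to manufacturing such a section, and the idea is to glue. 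Applying Theorem~\ref{th:sh-open-12} to each $\Vman_i$, the hypotheses furnish local sections $\SectShA_i\colon\Nbh_i\to\Ci{\Vman_i}{\RRR}$ of $\ShAVi$ with $\SectShA_i(i_{\Vman_i})=\zer$, where $(\Nbh_i,\SectShA_i)$ is $\contSS{\infty}{\infty}$-continuous on an $\Sr{\infty}$-open $\Nbh_i$ for $i\in\Lambda'$, and, for $i\in\Lambda\setminus\Lambda'$, $\contSS{d(r)}{r}$-continuous on an $\Sr{d(r)}$-open $\Nbh_i$ for every $r\geq a$, with the \emph{same} function $d$ for all $i\notin\Lambda'$.

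For the gluing step I would proceed as follows. Let $\rho_i\colon\imShA\to\imShAVi$ be the restriction $\amap\mapsto\amap|_{\Vman_i}$; since $\{\Vman_i\}$ is locally finite and each $\Vman_i$ is closed, $\rho_i$ is $\contSS{k}{k}$-continuous for every $k$ (a routine fact about strong topologies). Using local finiteness together with positivity and continuity of the period functions supplied by Lemma~\ref{lm:Shift-map-prop}(5) on the components of the overlaps $\Vman_i\cap\Vman_j$, choose for each $i$ a strong $\Sr{0}$-ball $B_i=\{\,|\afunc|<\delta_i\,\}$ about $\zer$ in $\Ci{\Vman_i}{\RRR}$ small enough that any function in $B_i$ and any function in $B_j$ differ, on every component of $\Vman_i\cap\Vman_j$, by strictly less (in absolute value) than the corresponding period. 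Put $\Nbh_i'=\Nbh_i\cap\SectShA_i^{-1}(B_i)$ and $\ANbh=\bigcap_{i\in\Lambda}\rho_i^{-1}(\Nbh_i')$. Because only finitely many of the defining conditions are active near any point, $\ANbh$ is $\Sr{\infty}$-open in $\imShA$ (and, if $\Lambda'=\varnothing$, carrying out the construction with the $\contSS{d(r)}{r}$-continuous local sections for a fixed $r\geq a$ makes $\ANbh$ even $\Sr{d(r)}$-open); and $\id_{\Mman}\in\ANbh$. For $\amap\in\ANbh$ let $\SectShA(\amap)$ be the function assembled from the family $\{\SectShA_i(\rho_i(\amap))\}_{i\in\Lambda}$: on each component of $\Vman_i\cap\Vman_j$ the functions $\SectShA_i(\rho_i\amap)$ and $\SectShA_j(\rho_j\amap)$ have the same image under the shift map attached to that component and are both $\Sr{0}$-small by construction of $\ANbh$, hence equal by Lemma~\ref{lm:Shift-map-prop}(3) and~(5); thus the family glues to a single $\SectShA(\amap)\in\funcA$, which is smooth because $\{\Int\Vman_i\}$ covers $\Mman$, satisfies $\ShA(\SectShA(\amap))=\amap$, and sends $\id_{\Mman}$ to $\zer$.

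To finish I would check that $\SectShA$ is continuous and then draw the last conclusion. Fix $\amap_0\in\ANbh$ and a basic $\Sr{r}$-neighbourhood $\mathcal{W}$ of $\SectShA(\amap_0)$ in $\funcA$; by local finiteness $\mathcal{W}$ is cut out by finitely many local conditions on the restrictions $\SectShA(\amap)|_{\Vman_i}=\SectShA_i(\rho_i(\amap))$, and each such condition is pulled back, using $\contSS{d(r)}{r}$-continuity (resp.\ $\contSS{\infty}{\infty}$-continuity) of $\SectShA_i$ and $\contSS{k}{k}$-continuity of $\rho_i$, to a $\Sr{d(r)}$-neighbourhood (resp.\ $\Sr{r_i}$-neighbourhood with $r_i$ finite) of $\amap_0$; intersecting these finitely many and using finiteness of $\Lambda'$ to set $r_*=\max_{i\in\Lambda'}r_i$, one obtains a $\Sr{\max(d(r),r_*)}$-neighbourhood of $\amap_0$ mapped by $\SectShA$ into $\mathcal{W}$. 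Hence $\SectShA$ is $\contSS{\infty}{\infty}$-continuous on $\ANbh$, and $\contSS{d(r)}{r}$-continuous for each $r\geq a$ when $\Lambda'=\varnothing$; by the reduction of the first paragraph, $\ShA$ is $\contSS{\infty}{\infty}$-open, and $\contSS{r}{d(r)}$-open for $r\geq a$ if $\Lambda'=\varnothing$. Finally, $\ShA$ is $\Sr{\infty}$-continuous (Lemma~\ref{lm:Shift-map-prop}(1)) and locally injective (Lemma~\ref{lm:Shift-map-prop}(4), since $\FixA$ is nowhere dense), so being also $\Sr{\infty}$-open it is a local homeomorphism; if $\ShA$ is injective it is therefore a homeomorphism of $\funcA$ onto $\imShA$, while if it is not, Lemma~\ref{lm:Shift-map-prop}(5) gives $\ker(\ShA)=\{n\nu\}_{n\in\ZZZ}$ with $\nu\colon\Mman\to(0,\infty)$ smooth and $\funcA=\Ci{\Mman}{\RRR}$, the translation action $\afunc\mapsto\afunc+n\nu$ of $\ZZZ$ on $\funcA$ is free and properly discontinuous, $\ShA$ identifies $\imShA$ with $\funcA/\ZZZ$ by Lemma~\ref{lm:Shift-map-prop}(3), the resulting continuous open bijection $\funcA/\ZZZ\to\imShA$ is a homeomorphism, and hence $\ShA$ is a $\ZZZ$-covering map.

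The hard part will be the interaction of the gluing with the strong topology in the middle step: ensuring that the intersection $\ANbh$ of the infinitely many constraints $\rho_i^{-1}(\Nbh_i')$ is genuinely strong-open — which is exactly what forces the cover to be locally finite and the neighbourhoods $\Nbh_i$ to be shrunk coherently — and, intertwined with this, seeing why $\Lambda'$ must be finite. For $i\notin\Lambda'$ one loss-of-derivatives function $d$ works uniformly in $i$, but each $i\in\Lambda'$ contributes its own, a priori unbounded, required source regularity, and only the finiteness of $\Lambda'$ permits taking the global maximum $r_*$ in the continuity estimate; without it the argument still produces an open map, but into the very-strong topology rather than $\Sr{\infty}$. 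By comparison, well-definedness and smoothness of the glued section and the covering-space conclusion are routine given Lemma~\ref{lm:Shift-map-prop} and Theorem~\ref{th:sh-open-12}.
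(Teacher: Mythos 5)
Your proof shares the essential mechanism of the paper's: glue local data over the locally finite cover, enforce consistency of the glued pieces by keeping the relevant differences strictly below period functions (this is where nowhere density of $\FixA$ enters), and use the uniform loss-of-derivatives $d$ on $\Lambda\setminus\Lambda'$ together with finiteness of $\Lambda'$ to keep the assembled constraint strong-open rather than very-strong-open. The structural difference is the route: you run everything through Theorem~\ref{th:sh-open-12}, reducing openness of $\ShA$ to producing one $\contSS{\infty}{\infty}$-continuous section on an $\Sr{\infty}$-neighbourhood of $\id_{\Mman}$, and then glue the local sections $\SectShA_i$. The paper instead fixes an arbitrary $\afunc\in\funcA$, builds a specific $\Sr{r}$-neighbourhood $\AMbh$ of $\afunc$ bounded by a function $\nu<\delta$ with $\delta<3\theta_x$ at periodic points, builds $\ANbh=\bigcap_i q_i^{-1}(\Nbh_i)$, and proves $\ShA(\AMbh)=\imShA\cap\ANbh$ directly, with uniqueness of the $\bfunc_i$ (hence well-definedness of the glued preimage) falling out of the same smallness estimate. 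Your route is arguably more modular, but it pays for that modularity by requiring a separate continuity verification for the glued section, and that is where there is a genuine flaw.

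In your final paragraph you assert that a basic $\Sr{r}$-neighbourhood $\mathcal{W}$ of $\SectShA(\amap_0)$ in $\funcA$ ``is cut out by finitely many local conditions on the restrictions $\SectShA(\amap)|_{\Vman_i}$''. On a non-compact $\Mman$ this is simply false: a basic strong neighbourhood is $\{\bfunc : |j^r\bfunc(x)-j^r\SectShA(\amap_0)(x)|<\eps(x)\ \forall x\in\Mman\}$ for some continuous positive $\eps$, and on a non-compact manifold this involves all $i\in\Lambda$, not finitely many. The mechanism that saves the argument is precisely what the hypotheses encode: the pullback constraints for $i\in\Lambda\setminus\Lambda'$ all incur the \emph{same} loss $d(r)$ and hence coalesce, by local finiteness (this is the content of the lemma the paper cites, \cite[Lm.~18]{Maks:TA:2003}), into a single $\Sr{d(r)}$-open condition; each $i\in\Lambda'$ contributes its own finite level $r_i$, and only the finiteness of $\Lambda'$ allows one to pass to $\max(d(r),r_*)$ with $r_*=\max_{i\in\Lambda'} r_i$. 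You state this correctly in your concluding remarks, but the formal step as written contradicts it. So the continuity argument needs to be rewritten: do not claim there are finitely many constraints; instead apply the same local-finiteness plus uniform-$d$ plus finite-$\Lambda'$ argument that you already used when proving $\ANbh$ is $\Sr{\infty}$-open. The gluing step itself also deserves a touch more care: $\Vman_i\cap\Vman_j$ need not be connected or a $\Dm$-submanifold, so rather than appealing to period functions of the overlaps you should argue pointwise as the paper does — at a non-periodic regular $x$ equality is immediate, at a periodic $x$ the difference is an integer multiple of $\theta_x$ killed by smallness, and on $\FixA$ equality follows from nowhere density and continuity. The covering-map endgame is fine and in fact spells out what the paper leaves implicit.
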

\begin{proof}
The proof follows the line of \cite[Th.~17]{Maks:TA:2003}.
Since $\FixA$ is nowhere dense, there exists a continuous function $\delta:\Mman\to(0,\infty)$ such that for every periodic point $x$ of period $\theta_x$ we have that $\delta(x)<3\theta_x$, see~\cite[Pr.~14]{Maks:TA:2003}.

Let $\afunc\in\funcA$, $r\geq a$, $d^r$ be a metric on the manifold $J^r(\Mman,\RRR)$ of $r$-jets, $\nu:\Mman\to(0,1)$ be a strictly positive continuous function such that $\nu<\delta$, and 
$$\AMbh \defeq \{\bfunc\in\funcA \ | \ d^r(j^{r}\afunc(x), j^{r}\bfunc(x)) < \nu(x), \forall x\in\Mman\}$$ be a base $\Sr{r}$-neighbourhood of $\afunc$, where $j^{r}\bfunc(x)$ denotes the $r$-jet of $\bfunc$ at $x$.
Then the restriction of $\ShA$ to $\AMbh$ is injective, \cite[Pr.~14]{Maks:TA:2003}.
We will show that $\ShA(\AMbh)$ is $\Sr{\infty}$-open in $\imShA$ and if $\Lambda'=\varnothing$, then $\ShA(\AMbh)$ is even $\Sr{d(r)}$-open in $\imShA$.
This will complete Theorem~\ref{th:Sh-open-map}.

For every $i\in\Lambda$ let
\begin{equation}\label{equ:ABmh_nu_i}
\AMbh_{i} \defeq \{\bfunc\in\funcAVi \ : \ d^r(j^{r}\afunc(x), j^{r}\bfunc(x)) < \nu(x), \forall x\in\Vman_i\}
\end{equation}
be an $\Sr{r}$-neighbourhood of $\afunc|_{\Vman_i}$ in $\funcAVi$. 
It follows from assumption about openness of $\ShAVi$ that
$$
\ShAVi(\AMbh_{i}) = \imShAVi \cap \Nbh_{i}\,,
$$
where $\Nbh_i$ is $\Sr{\infty}$-open in $\Ci{\Vman_i}{\Mman}$ for $i\in\Lambda'$ and $\Sr{d(r)}$-open for $i\in\Lambda\setminus\Lambda'$.
Moreover, the restriction of $\ShAVi$ to $\AMbh_i$ is one-to-one.

Let $p_i:\Ci{\Mman}{\RRR}\to \Ci{\Vman_{i}}{\RRR}$ and $q_i: \Ci{\Mman}{\Mman}\to \Ci{\Vman_{i}}{\Mman}$ be the ``restriction to $\Vman_{i}$'' maps.
Then we have the following commutative diagram
\begin{equation}\label{equ:sh_p__q_sh}
\begin{CD}
 \funcA @>{\ShA}>>  \imShA \\
 @V{p_i}VV    @VV{q_i}V  \\
 \funcAVi       @>{\ShAVi}>>  \imShAVi
\end{CD}
\end{equation}
By definition $\ShA$ and $\ShAVi$ as surjective.
It also follows from definitions of $\funcA$ and $\funcAVi$ and assumption that $\Vman_i$ is a $\Dm$-submanifold, that every $\sigma\in\Ci{\Vman}{\RRR}$ extends to some $\bar\sigma\in\Ci{\Mman}{\RRR}$.
Moreover, if $\sigma\in\funcAVi$, we can assume that $\bar\sigma\in\funcA$, whence  $p_i(\funcA) = \funcAVi$.
Therefore $q_i$ is surjective as well.
It follows from definition that 
$$
\AMbh=\mathop\cap\limits_{i\in\Lambda} p_i^{-1}(\AMbh_i).
$$
Put
\begin{equation}\label{equ:MN_inters}
\ANbh\defeq\mathop\cap\limits_{i\in\Lambda} q_i^{-1}(\ANbh_i).
\end{equation}
Since $\{\Vman_i\}_{i\in\Lambda}$ is a locally finite cover and $\Lambda'$ is \emph{finite}%
\footnote{If $\Lambda'$ were infinite, then $\ANbh$ would be open in the so-called very-strong topology, see~\cite{Illman:OJM:2003}}, it follows from \cite[Lm.~18]{Maks:TA:2003} that $\ANbh$ is $\Sr{\infty}$-open in $\Ci{\Mman}{\Mman}$ and even $\Sr{d(r)}$-open if $\Lambda'=\varnothing$.
We will now show that 
\begin{equation}\label{imShift_N__M}
\ShA(\AMbh) = \imShA \cap \ANbh.
\end{equation}
This will complete our theorem.

It follows from~\eqref{equ:sh_p__q_sh} and~\eqref{equ:MN_inters} that $\ShA(\AMbh) \subset \imShA \cap \ANbh.$
Conversely, let $\bmap\in\imShA\cap\ANbh$.
Then $\bmap|_{\Vman_{i}} = q_i(\bmap) \in \imShAVi\cap\ANbh_i = \ShAVi(\AMbh_i)$ for all $i\in\Lambda$ and $\bmap=\ShA(\bfunc')$ for some $\bfunc'\in \Ci{\Mman}{\RRR}$.
We have to find (possibly another) function $\bfunc\in\AMbh$ such that $\bmap=\ShA(\bfunc)$.
Since the restriction of $\ShAVi$ to $\AMbh_i$ is injective, $\bmap|_{\Vman_{i}}=\ShAVi(\bfunc_i)$ for a \myemph{unique} $\bfunc_i\in\AMbh_i$.

It remains to show that $\bfunc_i=\bfunc_j$ on $\Vman_{i} \cap \Vman_{j}$ for all $i,j\in\Lambda$.
Since $\{\Int\Vman_i\}_{i\in\Lambda}$ is a cover of $\Mman$ as well, the family of functions $\{\bfunc_i\}_{i\in\Lambda}$ will define a unique smooth function $\bfunc\in \mathop\cap\limits_{i\in\Lambda} p_i^{-1}(\AMbh_i) = \AMbh$ such that $\bfunc|_{\Vman_{i}} = \bfunc_i$ and $\ShA(\bfunc)=\bmap$.
This will prove~\eqref{imShift_N__M} and complete our theorem.

Let $x\in \Int\Vman_{i} \cap \Int\Vman_{j}$ for some $i,j\in \Lambda$.
Then $\bmap(x) = \AFlow(x,\bfunc_i(x)) = \AFlow(x,\bfunc_j(x)).$

If $x$ is a non-periodic regular point, then $\bfunc_i(x) = \bfunc_j(x)$.

If $x$ is periodic of period $\theta_x$, then $\bfunc_i(x)-\bfunc_j(x)= b\theta_x$ for some $b\in\ZZZ$.
But $|\bfunc_i(x)-\bfunc_j(x)| \leq |\afunc(x)-\bfunc_i(x)|+ |\afunc(x)-\bfunc_j(x)|<2\delta(x) < \theta_x$, whence $b=0$.

Thus $\bfunc_i=\bfunc_j$ on $(\Int\Vman_{i}\cap  \Int\Vman_{j})\setminus \FixA$.
Since $\FixA$ is nowhere dense, we see that $\bfunc_i=\bfunc_i$ on all of $\Vman_{i}\cap \Vman_{j}$ as well.
\end{proof}

\section{Replacing $\Mman$ with an open subset}\label{sect:repl_M_by_W}
Theorem~\ref{th:Sh-open-map} reduces the verification of openness of the shift map to openness of a family of shift maps $\{\ShAVi\}$, where $\{\Vman_i\}$ is any locally finite cover of $\Mman$ by $\Dm$-submanifolds.
Our next aim is to ``localize'' the verification an openness of $\ShAV$ by replacing all the manifold $\Mman$ with an open neighbourhood $\Wman$ of $\Vman$.

Let $\AFld$ be a vector field on $\Mman$, $\Wman\subset\Mman$ be a connected, open subset, and $\AFlowW:\Wman\times\RRR\supset\domAW\to\Wman$ be the local flow generated by the restriction $\AFld|_{\Wman}$ of $\AFld$ to $\Wman$.
Then 
$$
\domAW \;\subset\; \domA\cap (\Wman\times\RRR)
$$
and $\AFlow=\AFlowW$ on $\domAW$.
Let $\Vman\subset\Wman$ be a $\Dm$-submanifold.
Then $\funcAWV$ is an $\Sr{0}$-open subset of $\funcAV$, and the corresponding shift map $\ShAWV$ of $\AFlowW$ coincides with $\ShAV$ on $\funcAWV$, i.e.
\begin{equation}\label{equ:ShAWV_ShAV}
\ShAWV=\ShAV|_{\funcAWV}\;: \;\funcAWV \;\to\; \imShAWV\;\subset \; \imShAV.
\end{equation}

The following statement characterizes openness of $\ShAV$ via openness of $\ShAWV$.
\begin{theorem}\label{th:charact_sh_open}
Let $\Vman\subset\Wman$ be a $\Dm$-submanifold, and $r,s\in\Nzi$.
Then the following conditions {\rm(A)-(C)} are equivalent:
\begin{enumerate}
 \item[\rm(A)]
The shift map $\ShAV$ is $\contSS{r}{s}$-open;
 \item[\rm(B)]
The shift map $\ShAWV$ is $\contSS{r}{s}$-open and its image $\imShAWV$ is $\Sr{s}$-open in $\imShAV$, i.e. there exists an $\Sr{s}$-open subset $\Nbh\subset\Ci{\Vman}{\Wman}$ such that 
$$\imShAV \cap \Nbh \; = \; \imShAWV.$$
 \item[\rm(C)]
The shift map $\ShAWV$ is $\contSS{r}{s}$-open and there exists an $\Sr{s}$-neigh\-bourhood $\Nbh$ of the identity inclusion $i_{\Vman}:\Vman\subset\Mman$ in $\Ci{\Vman}{\Mman}$ such that $$\imShAV \cap \Nbh \; \subset \; \imShAWV.$$
\end{enumerate}
\end{theorem}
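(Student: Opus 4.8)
The plan is to establish the cycle of implications $(\mathrm{A})\Rightarrow(\mathrm{B})\Rightarrow(\mathrm{C})\Rightarrow(\mathrm{A})$. The first two implications are essentially formal manipulations with the identity $\ShAWV=\ShAV|_{\funcAWV}$ of~\eqref{equ:ShAWV_ShAV} and the fact that $\funcAWV$ is $\Sr{0}$-open (hence $\Sr{r}$-open) in $\funcAV$; all the substance is in $(\mathrm{C})\Rightarrow(\mathrm{A})$, which I will reduce to Theorem~\ref{th:sh-open-12}. The idea there is that a local inverse of $\ShAWV$ near the identity inclusion $i_{\Vman}$ is automatically a local inverse of $\ShAV$ near $i_{\Vman}$, because by $(\mathrm{C})$ the images $\imShAV$ and $\imShAWV$ coincide near $i_{\Vman}$, while $\ShAV$ and $\ShAWV$ agree on $\funcAWV$; Theorem~\ref{th:sh-open-12} then upgrades this local information at $i_{\Vman}$ to $\contSS{r}{s}$-openness of $\ShAV$ everywhere on $\funcAV$. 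The main obstacle I anticipate is purely bookkeeping: one must be scrupulous about which ambient space ($\imShAV$, $\imShAWV$, $\Ci{\Vman}{\Wman}$, or $\Ci{\Vman}{\Mman}$) each neighbourhood is open in, and one must check that $\Vman$ is legitimately a $\Dm$-submanifold of $\Wman$, so that Theorem~\ref{th:sh-open-12} really applies to $\AFld|_{\Wman}$.

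For $(\mathrm{A})\Rightarrow(\mathrm{B})$ I would first apply $\contSS{r}{s}$-openness of $\ShAV$ to the $\Sr{r}$-open set $\funcAWV$ itself: this gives at once that $\imShAWV=\ShAV(\funcAWV)$ is $\Sr{s}$-open in $\imShAV$, and since $\Wman$ is open in $\Mman$ the space $\Ci{\Vman}{\Wman}$ is $\Sr{s}$-open in $\Ci{\Vman}{\Mman}$, so the witnessing open set can be taken inside $\Ci{\Vman}{\Wman}$; this is the second half of $(\mathrm{B})$. For the first half, any $\Sr{r}$-open $\Mbh\subset\funcAWV$ is also $\Sr{r}$-open in $\funcAV$, whence $\ShAWV(\Mbh)=\ShAV(\Mbh)$ is $\Sr{s}$-open in $\imShAV$; being contained in the subspace $\imShAWV$, it is automatically $\Sr{s}$-open there, so $\ShAWV$ is $\contSS{r}{s}$-open. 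Then $(\mathrm{B})\Rightarrow(\mathrm{C})$ is a tautology: $i_{\Vman}=\ShAWV(\zer)\in\imShAWV$ because the zero function lies in $\funcAWV$ ($\domAW$ being a neighbourhood of $\Wman\times 0$), so writing $\imShAWV=\imShAV\cap\Nbh$ with $\Nbh\subset\Ci{\Vman}{\Wman}$ an $\Sr{s}$-open set, this $\Nbh$ is an $\Sr{s}$-neighbourhood of $i_{\Vman}$ with $\imShAV\cap\Nbh=\imShAWV\subset\imShAWV$.

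For $(\mathrm{C})\Rightarrow(\mathrm{A})$ I would argue as follows. Applying Theorem~\ref{th:sh-open-12} to the local flow $\AFlowW$ of $\AFld|_{\Wman}$, the $\contSS{r}{s}$-openness of $\ShAWV$ yields (via implication $(1)\Rightarrow(3)$ of that theorem) an $\Sr{s}$-open neighbourhood $\Nbh_{1}=\imShAWV\cap\Nbh_{2}$ of $i_{\Vman}$ in $\imShAWV$ and an $\contSS{s}{r}$-continuous section $\SectShA:\Nbh_{1}\to\funcAWV$ with $\SectShA(i_{\Vman})=\zer$ and $\ShAWV\circ\SectShA=\id$. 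Letting $\Nbh$ be the $\Sr{s}$-neighbourhood of $i_{\Vman}$ given by $(\mathrm{C})$, set $\Nbh_{3}:=\imShAV\cap\Nbh\cap\Nbh_{2}$, an $\Sr{s}$-open neighbourhood of $i_{\Vman}$ in $\imShAV$. Every element of $\Nbh_{3}$ lies in $\imShAV\cap\Nbh\subset\imShAWV$, hence in $\imShAWV\cap\Nbh_{2}=\Nbh_{1}$; therefore $\SectShA$ restricts to an $\contSS{s}{r}$-continuous map $\Nbh_{3}\to\funcAWV$ sending $i_{\Vman}$ to $\zer$ and satisfying $\ShAV\circ\SectShA|_{\Nbh_{3}}=\id$, the last identity because $\ShAV$ and $\ShAWV$ agree on $\funcAWV$ by~\eqref{equ:ShAWV_ShAV}. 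Thus condition (3) of Theorem~\ref{th:sh-open-12} holds for $\ShAV$, and its implication $(3)\Rightarrow(1)$ gives that $\ShAV$ is $\contSS{r}{s}$-open. As noted, no new analytic estimate enters; the one thing that needs care, besides the choice of ambient spaces above, is the justification that $\Vman\subset\Wman$ may be treated as a $\Dm$-submanifold of $\Wman$ so that Theorem~\ref{th:sh-open-12} is applicable in this restricted setting.
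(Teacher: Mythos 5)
Your proof is correct and follows essentially the same route as the paper's: $(\mathrm{A})\Rightarrow(\mathrm{B})$ by restricting the open map $\ShAV$ to the $\Sr{0}$-open subset $\funcAWV$, $(\mathrm{B})\Rightarrow(\mathrm{C})$ trivially, and $(\mathrm{C})\Rightarrow(\mathrm{A})$ by producing a local section of $\ShAWV$ near $i_{\Vman}$ via Theorem~\ref{th:sh-open-12}, observing that it is also a section of $\ShAV$ because $\imShAV\cap\Nbh\subset\imShAWV$ and $\ShAV=\ShAWV$ on $\funcAWV$, and invoking $(3)\Rightarrow(1)$ of Theorem~\ref{th:sh-open-12} again. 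The only difference is cosmetic: you keep slightly more careful track of which ambient space each neighbourhood is open in, which if anything tightens the paper's own wording.
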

\begin{proof}
(A)$\Rightarrow$(B).
Since $\funcAWV$ is an $\Sr{0}$-open subset of $\funcAV$ and $\ShAV$ is an $\contSS{r}{s}$-open map, it follows that the restriction $$\ShAV|_{\funcAWV}=\ShAWV$$ is an $\contSS{r}{s}$-open map and its image $\imShAWV$ is $\Sr{s}$-open in $\imShAV$.

The implication (B)$\Rightarrow$(C) is evident.

(C)$\Rightarrow$(A).
Suppose that $\ShAWV$ is $\contSS{r}{s}$-open.
Then by (2) of Theorem~\ref{th:sh-open-12} there exists an $\Sr{s}$-neighbourhood $\Unbh$ of the identity inclusion $i_{\Vman}:\Vman\subset\Wman$ in $\Ci{\Vman}{\Wman}$ and an $\contSS{s}{r}$-continuous section of $\ShAWV$ defined on $\Unbh'=\Unbh\cap\imShAWV$:
$$
\SectShA:
\imShAV \supset \imShAWV \supset \Unbh' \;\xrightarrow{~\SectShA~}\; \funcAWV \subset \funcAV,
$$
i.e., $\ShAWV\circ\SectShA=\id_{\Unbh'}$.

Since $\imShAV\cap\Nbh\subset\imShAWV$ and $\Ci{\Vman}{\Wman}$ is an $\Sr{0}$-open subset of $\Ci{\Vman}{\Mman}$, we obtain that $\Unbh''=\Unbh'\cap\Nbh$ is an $\Sr{s}$-neighbourhood of $i_{\Vman}:\Vman\subset\Mman$ in $\Ci{\Vman}{\Mman}$.
Moreover, $\ShAV$ coincides with $\ShAWV$ on $\funcAWV$.
Therefore $\SectShA$ is also a section of $\ShAV$ defined on $\Unbh''$.
Then by (2) of Theorem~\ref{th:sh-open-12} $\ShAV$ is $\contSS{r}{s}$-open.
\end{proof}

Emphasize that in this theorem $\Wman$ is an \myemph{arbitrary} open neighbourhood of $\Vman$.

\section{Openness of $\imShAWV$ in $\imShA$. Regular case.}\label{sect:open_im_in_im}
Let $z$ be a regular point of $\AFld$, i.e. $\AFld(z)\not=0$.
In this section we present necessary and sufficient conditions for $\contWW{r}{r}$-openness of local shift map $\ShAV$, where $\Vman$ is arbitrary small $\Dm$-neighbourhood of $z$, see Theorem~\ref{th:cond_open_imShAWV_reg}.
As a consequence we will obtain the following 
\begin{theorem}\label{th:ShAVopen_reg}
Suppose that $z$ is a regular point of $\AFld$ having one of the following properties:
\begin{enumerate}
 \item[\rm(a)]
$z$ is non-periodic and non-recurrent;
 \item[\rm(b)]
$z$ is periodic and the germ at $z$ of its first recurrence map $R:(\trans,z)\to (\trans,z)$ is periodic;
 \item[\rm(c)]
$z$ is periodic and the tangent map $T_z R: T_z\trans\to T_z\trans$ has at least one eigen value $\lambda$ such that $|\lambda|\not=1$.
\end{enumerate}
Then for any sufficiently small connected $\Dm$-neighbourhood $\Vman$ of $z$ the corresponding shift map $\ShAV$ of $\AFld$ is $\contWW{r}{r}$-open for all $r\geq 1$.
Moreover, if $z$ satisfies either {\rm(a)} or {\rm(b)}, then $\ShAV$ is $\contWW{0}{0}$-open as well.
\end{theorem}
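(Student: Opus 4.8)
The plan is to localize near $z$ and invoke Theorem~\ref{th:charact_sh_open}. By the flow box theorem, choose an arbitrarily small connected open neighbourhood $\Wman$ of the orbit of $z$: in case~(a) one may take $\Wman\cong\trans\times(-1,1)$ a flow box on which $\AFld$ becomes $\partial/\partial t$, with $\trans$ a transversal $(m-1)$-disk at $z$; in cases~(b) and~(c) the orbit $\orb_{z}$ is a circle and $\Wman$ is a thin tubular neighbourhood of $\orb_{z}$ on which $\AFld|_{\Wman}$ is the suspension of the germ of the first recurrence map $R:(\trans,z)\to(\trans,z)$. Take the $\Dm$-neighbourhood $\Vman$ of $z$ to be a small closed disk inside $\Wman$, so that $\Wr{r}$ and $\Sr{r}$ agree on $\Ci{\Vman}{\Mman}$. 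By Theorem~\ref{th:charact_sh_open} applied to $\Vman\subset\Wman$ it then suffices to prove: (i) $\ShAWV$ is $\contWW{r}{r}$-open, and (ii) $\imShAWV\supset\imShAV\cap\Nbh$ for some $\Wr{r}$-neighbourhood $\Nbh$ of $i_{\Vman}$.

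Part~(i) is a model computation independent of the recurrence hypotheses. By Theorem~\ref{th:sh-open-12} it is enough to produce a $\contWW{r}{r}$-continuous local section of $\ShAWV$ at $i_{\Vman}$; in the flow-box coordinates $(w,t)$ on $\Wman$ a map preserving orbits has the form $\amap=(\amap_{1},\amap_{2})$ with $\amap_{1}=w$, the shift map is $\afunc\mapsto\bigl((w,t)\mapsto(w,t+\afunc(w,t))\bigr)$, so the section $\amap\mapsto\bigl((w,t)\mapsto\amap_{2}(w,t)-t\bigr)$ inverts it near $i_{\Vman}$ and is $\contWW{r}{r}$-continuous for every $r\in\Nz$. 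In cases~(b) and~(c) one runs the same argument over a time window shorter than twice the period, where the suspension is again a genuine flow box, and patches. Thus $\ShAWV$ is $\contWW{r}{r}$-open for all $r\geq0$.

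For part~(ii) write $\amap=\ShAV(\afunc)$ with $\afunc\in\funcAV$; we must force the graph of $\afunc$ into $\domAW$, equivalently a uniform bound $|\afunc(x)|\leq\theta$ on $\Vman$, with $\theta$ fixed by the thinness of $\Wman$. Since $\AFld(z)\neq0$, on a small $\Vman$ there is $c>0$ with $\mathrm{dist}(\AFlow(x,s),x)\geq c|s|$ for all $(x,s)$ whose orbit arc stays in a fixed small set. In case~(a), non-recurrence of $z$ lets us shrink $\Wman$ so that an orbit arc through a point of $\Vman$ which leaves $\Vman$ re-enters only within bounded time; then a $\Wr{0}$-small $\amap$ forces $\afunc$ uniformly small, proving~(ii) for $r=0$ as well, so $\ShAV$ is $\contWW{0}{0}$-open. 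In case~(b) the germ $R$ has finite order, so every orbit near $\orb_{z}$ is periodic and remains in $\Wman$; hence $\funcAWV=\funcAV$ and $\imShAWV=\imShAV$, so~(ii) is vacuous and, with~(i), $\ShAV$ is $\contWW{r}{r}$-open for all $r\geq0$.

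Case~(c) is the delicate point and the expected main obstacle. Here the orbits near $\orb_{z}$ are non-periodic, so $\ZidAV=\{0\}$ and $\ShAV$ is injective by Lemma~\ref{lm:Shift-map-prop}(5); consequently $\amap\in\imShAWV$ precisely when the graph of $\afunc=\ShAV^{-1}(\amap)$ lies in $\domAW$. Replacing $\AFld$ by $-\AFld$ (which leaves all orbits, hence $\imShAV$, unchanged) we may assume $|\lambda|>1$. The mechanism is that $|\lambda|>1$ makes the time-$t$ maps $\AFlow_{t}$, for $|t|$ bounded away from $0$, stay $\Wr{1}$-far from $i_{\Vman}$ near $z$: differentiating $\amap(x)=\AFlow(x,\afunc(x))$ at $x=z$ gives $d\amap(z)=T_{z}\AFlow_{\afunc(z)}+\AFld(\amap(z))\otimes d\afunc(z)$, where the rank-one correction points along $\AFld(\amap(z))\approx\AFld(z)$ and so does not touch the block of $T_{z}\AFlow_{\afunc(z)}$ acting on the directions transversal to $\orb_{z}$; that block is conjugate to $(T_{z}R)^{k}$ when $\afunc(z)$ is close to $k\theta_{z}$, and $(T_{z}R)^{k}-\id$ has the eigenvalue $\lambda^{k}-1$, whose modulus is bounded below by a positive constant independent of $k\neq0$ because $|\lambda|\neq1$. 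Hence an $\amap$ sufficiently $\Wr{1}$-close to $i_{\Vman}$ must have $|\afunc(z)|<\theta_{z}$, and by the analogous estimates along $\orb_{z}$ the function $\afunc$ is uniformly small, so its graph lies in $\domAW$ for a sufficiently thin $\Wman$; this gives~(ii) for $r\geq1$, and with~(i) and Theorem~\ref{th:charact_sh_open} the $\contWW{r}{r}$-openness of $\ShAV$ for all $r\geq1$. No such gap survives at $r=0$, since the contracting iterates $\AFlow_{-k\theta_{z}}|_{\Vman}$ become $\Wr{0}$-small on a small $\Vman$ while leaving $\imShAWV$ --- which is exactly why $r=0$ is omitted here; the matching converse, that (a)--(c) are also necessary, is the content of Theorem~\ref{th:cond_open_imShAWV_reg}.
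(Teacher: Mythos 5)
Your overall strategy (localize near $z$, apply Theorem~\ref{th:charact_sh_open} to split into openness of $\ShAWV$ and openness of $\imShAWV$ inside $\imShAV$, then exploit the recurrence/eigenvalue hypotheses) is the right skeleton, and your eigenvalue-gap observation in case~(c) matches the paper's key point. But the execution has real gaps that the paper's proof is careful to close.

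First, in cases~(b) and~(c) you take $\Wman$ to be a tubular neighbourhood of the entire closed orbit $\orb_{z}$ rather than a flow box at $z$. This is a genuinely different geometric setup, and it makes part~(i) --- $\contWW{r}{r}$-openness of $\ShAWV$ --- nontrivial: the restricted flow on a solid-torus neighbourhood is a suspension, not a product, and your phrase ``one runs the same argument over a time window shorter than twice the period \dots and patches'' is not an argument. The paper's choice is always a $4\eps$-flow-box $\Wman'$ with central $\eps$-box $\Wman$; there the inverse is simply $\ShAWV^{-1}(\amap)(y,s)=p_{2}\circ\amap(y,s)-s$ (Lemma~\ref{lm:ShAWV_open}), so part~(i) is immediate. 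The difficulty is then concentrated entirely in part~(ii), and there the paper introduces the pieces you are missing.

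Second, your bounds on $\afunc$ in cases~(a) and~(c) are not justified uniformly over $\Vman$. In case~(a) you argue from non-recurrence that ``an orbit arc through a point of $\Vman$ \dots re-enters only within bounded time,'' but non-recurrence of the single point $z$ says nothing directly about the return times of nearby orbits; and in case~(c) the step ``by the analogous estimates along $\orb_{z}$ the function $\afunc$ is uniformly small'' is asserted, not proved. The paper closes exactly this gap with Lemma~\ref{lm:main-prop-fVW} and Corollary~\ref{cor:classes_sim}: using connectedness of $\Vman$ and the flow-box structure, any two $\afunc,\bfunc$ with images in $\Wman$ either satisfy $|\afunc-\bfunc|<2\eps$ \emph{everywhere on $\Vman$} or differ by more than $6\eps$ everywhere, giving a discrete, linearly ordered family of equivalence classes on which the transversal projection $P_{1}$ is constant. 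Openness of $\imShAWV$ then reduces cleanly (Lemma~\ref{lm:imShAW_P10_isol} and Theorem~\ref{th:cond_open_imShAWV_reg}) to asking whether $P_{1}(\zer)$ is isolated in $\im P_{1}$ with respect to $\Wr{r}$, and the three cases~(a)--(c) become: only one class exists (a); finitely many classes, hence isolation for every $r\geq0$ (b); and $T_{z}(P_{1}(A))=\lambda^{k}u$ with $\inf_{k\neq0}|\lambda^{k}-1|>0$, hence isolation for $r\geq1$ (c). Without this structural lemma your local estimates at the single point $z$ do not control $\afunc$ over $\Vman$, and the case analysis does not close.

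A small further point: in case~(c) you replace $\AFld$ by $-\AFld$ to arrange $|\lambda|>1$; this normalization is harmless but also unnecessary, since the paper's argument uses only that the scalars $|\lambda^{k}-1|$ are bounded away from $0$ for $k\neq0$, which is symmetric in $\lambda\leftrightarrow\lambda^{-1}$.
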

The proof will be given in \S\ref{sect:proof_th:ShAVopen_reg}.

Since $z$ is a regular point of $\AFld$, there exist $\eps>0$, a neighbourhood $\Wman'$ of $z$, and a diffeomorphism
$$
\eta:\Wman'\to\RRR^{n-1}\times(-4\eps,4\eps)
$$
such that in the coordinates $(y,s)$ on $\Wman'$ induced by $\eta$ we have that 
$$
\AFlow((y,s),t)=(y,s+t),
$$
whenever $s,s+t\in(-4\eps,4\eps)$.
We will call $\Wman'$ a $4\eps$-flow-box at $z$, see Figure~\ref{fig:flowbox}.
\begin{figure}[ht]
\includegraphics[height=2.3cm]{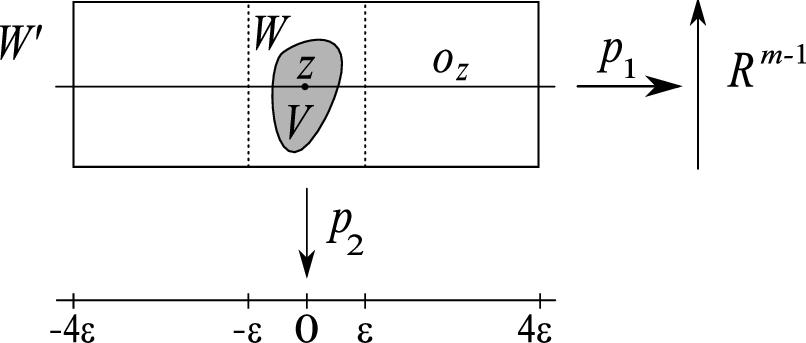} 
\caption{Flow box}
\protect\label{fig:flowbox}
\end{figure}
Let also $\Wman=\eta^{-1}\bigl(\RRR^{m-1}\times(-\eps,\eps)\bigr) \subset \Wman'$ be the ``central'' $\eps$-flow-box at $z$.
For every $\Dm$-submanifold $\Vman\subset\Wman$ denote
\begin{equation}\label{equ:UUU}
\fVW\;=\;\{\afunc\in\funcAV \ : \ \ShAV(\afunc)(\Vman) \subset \Wman \}.
\end{equation}
Then \ $\funcAWV \subset\fVW$ \ and
\begin{equation}\label{equ:ShAVU_imShAV_CVW}
\ShAV(\fVW) =\imShAV\cap \Ci{\Vman}{\Wman}
\end{equation}
is a $\Wr{0}$-open neighbourhood of $i_{\Vman}$ in $\imShAV$.

Let $p_1:\Wman'\to\RRR^{m-1}$ and $p_2:\Wman'\to(-4\eps,4\eps)$ be the standard projections.
Then we can define two maps $P_1:\fVW\to \Ci{\Vman}{\RRR^{m-1}}$ and $P_2:\fVW\to \Ci{\Vman}{(-\eps,\eps)}$ by
\begin{gather*}
P_1(\afunc) = p_1 \circ \ShAV(\afunc):\; \Vman \;\xrightarrow{~\ShAV(\afunc)~}\; \Wman\;\cong\;  \RRR^{m-1} \times (-\eps,\eps) \;\xrightarrow{~p_1~}\; \RRR^{m-1},
\\
P_2(\afunc) = p_2 \circ \ShAV(\afunc):\; \Vman \;\xrightarrow{~\ShAV(\afunc)~}\; \Wman\;\cong\;  \RRR^{m-1} \times (-\eps,\eps) \;\xrightarrow{~p_2~}\; (-\eps,\eps),
\end{gather*}
for $\afunc\in\fVW$.
Thus $\ShAV(\afunc)=(P_1(\afunc),P_2(\afunc))$.
\begin{theorem}\label{th:cond_open_imShAWV_reg}
Let $\Vman \subset\Wman$ be a \myemph{connected} compact $\Dm$-submanifold, and $\zer:\Vman\to\RRR$ be the zero function.
\begin{enumerate}
\item[\rm(1)] 
Then the map $P_1$ is locally constant with respect to the $\Wr{0}$ topology of $\fVW$ and its image $P_1(\fVW) \subset \Ci{\Vman}{\RRR^{m-1}}$ is at most countable.
\item[\rm(2)]
The shift map $\ShAV:\funcAV\to\imShAV$ is $\contWW{r}{r}$-open if and only if $P_1(\zer)$ is an isolated point of $P_1(\fVW)$ in $\Ci{\Vman}{\RRR^{m-1}}$ with respect to the $\Wr{r}$ topology.
\end{enumerate}
\end{theorem}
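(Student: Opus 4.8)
The plan is to read off both statements from the flow-box normal form. In the coordinates $(y,s)$ on $\Wman'\cong\RRR^{m-1}\times(-4\eps,4\eps)$ one has $\AFlow((y,s),t)=(y,s+t)$ as long as the $s$-coordinate stays in $(-4\eps,4\eps)$, and for each point $x$ the intersection $\orb_x\cap\Wman'$ is a disjoint union of ``sheets'' $\{y\}\times(-4\eps,4\eps)$, along each of which $p_1$ is constant and the flow is translation in $s$. Thus, writing $x=(y_x,s_x)$ for $x\in\Wman$, for every $\afunc\in\fVW$ the point $\ShAV(\afunc)(x)$ lies in exactly one sheet of $\orb_x\cap\Wman$, and $P_1(\afunc)(x)$ is the $p_1$-value (the ``label'') of that sheet; in particular $P_1(\zer)=p_1|_{\Vman}$ is the label of the sheet through $x$ itself. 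I would establish this dictionary first, as both parts are essentially consequences of it.

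For (1): if $\bfunc\in\fVW$ satisfies $\sup_{\Vman}|\bfunc-\afunc|<3\eps$, then for each $x$ the point $\ShAV(\bfunc)(x)=\AFlow\bigl(\ShAV(\afunc)(x),\bfunc(x)-\afunc(x)\bigr)$ is obtained from $\ShAV(\afunc)(x)\in\Wman$ by a flow of duration $<3\eps$, hence stays in $\Wman'$ on the same sheet, so $P_1(\bfunc)=P_1(\afunc)$. Since $\Vman$ is compact, $\Wr0$ is the supremum-metric topology, so this shows $P_1$ is $\Wr0$-locally constant on $\fVW$; and because $\fVW$, being a subspace of $\Ci{\Vman}{\RRR}$ with that metric, is separable metric, a locally constant map on it has at most countable image, which gives (1) (each $P_1(\afunc)$ being a $\Cont\infty$, hence continuous, map $\Vman\to\RRR^{m-1}$).

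For (2) the guiding observation is that the elements of $\imShAV$ near $i_{\Vman}$ whose label equals $p_1|_{\Vman}$ are precisely the ``vertical shifts'' $x\mapsto(y_x,s_x+\afunc(x))$ with $|\afunc|<3\eps$, on which $\ShAV$ is a $\Wr r$-homeomorphism onto its image with inverse $g\mapsto(x\mapsto p_2(g(x))-s_x)$; so the sole obstruction to $\contWW{r}{r}$-openness is the possible presence near $i_{\Vman}$ of elements of $\imShAV$ with a \emph{different} label --- i.e. the failure of $p_1|_{\Vman}$ to be isolated in $P_1(\fVW)$. For the ``if'' direction I would fix a $\Wr r$-open $\mathcal O$ with $\mathcal O\cap P_1(\fVW)=\{p_1|_{\Vman}\}$, put $\Nbh=\{g\in\Ci{\Vman}{\Wman}:p_1\circ g\in\mathcal O,\ \sup_{\Vman}|p_2(g(x))-s_x|<\eps\}$, note that any $g\in\Nbh\cap\imShAV$ equals $\ShAV(\afunc)$ for some $\afunc\in\fVW$ by~\eqref{equ:ShAVU_imShAV_CVW} and hence has $p_1\circ g=P_1(\afunc)=p_1|_{\Vman}$, and conclude that $\SectShA(g)(x):=p_2(g(x))-s_x$ defines a $\contWW{r}{r}$-continuous section of $\ShAV$ over $\Nbh\cap\imShAV$ with $\SectShA(i_{\Vman})=\zer$; Theorem~\ref{th:sh-open-12}(3) (valid here because $\Wr r=\Sr r$ on the compact $\Vman$) then yields $\contWW{r}{r}$-openness. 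For the ``only if'' direction, $\contWW{r}{r}$-openness turns $\Nbh_0:=\ShAV(\{\afunc\in\funcAV:\sup_{\Vman}|\afunc|<3\eps\}\cap\fVW)$ into a $\Wr r$-open neighbourhood of $i_{\Vman}$ in $\imShAV$ on which $p_1\circ g\equiv p_1|_{\Vman}$; I would then pull $\Nbh_0$ back along the $\Wr r$-continuous map $\iota:\Ci{\Vman}{\RRR^{m-1}}\to\Ci{\Vman}{\Wman}$, $\iota(h)(x)=(h(x),s_x)$, which satisfies $\iota(p_1|_{\Vman})=i_{\Vman}$ and $p_1\circ\iota=\id$ and carries each value $P_1(\afunc)$ back into $\imShAV$ (because $\iota(P_1(\afunc))(x)=\AFlow\bigl(x,\afunc(x)+s_x-P_2(\afunc)(x)\bigr)$), so that the pre-image of $\Nbh_0$ is a $\Wr r$-open set isolating $p_1|_{\Vman}$ in $P_1(\fVW)$.

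I expect this final pull-back in the ``only if'' direction to be the one genuinely delicate point: openness of $\ShAV$ is a property of neighbourhoods inside $\imShAV\subset\Ci{\Vman}{\Wman}$, whereas the conclusion must be stated in the strictly smaller function space $\Ci{\Vman}{\RRR^{m-1}}$, and the bridge is exactly that $\iota$ sends the constant-height copy of any value $P_1(\afunc)$ back into $\imShAV$, so the ``no other label near $i_{\Vman}$'' property transfers to isolatedness of $p_1|_{\Vman}$. The remaining ingredients --- that $\zer\in\fVW$, that $\fVW$ is $\Wr0$-open and contains every sufficiently $\Wr0$-small function, and the various checks that the sets and maps above are $\Wr r$-open, resp. $\Wr r$-continuous --- are routine unwindings of the flow-box picture.
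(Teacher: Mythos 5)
Your proof is correct, and it rests on the same flow-box picture as the paper's, though it organizes part (2) differently enough to be worth contrasting. For part (1) your local-constancy argument is essentially the content of the paper's Lemma~\ref{lm:main-prop-fVW} and Corollary~\ref{cor:classes_sim} (you use the radius $3\eps$ where the paper uses $2\eps$; both express that a flow of duration less than $3\eps$ from a point of $\Wman$ cannot escape the $4\eps$-box $\Wman'$), while your route to countability -- a locally constant map on a separable metric space has countable image -- is more economical than the paper's, which instead exhibits a strict total order on the $\sim$-classes. For part (2) the paper routes through the reduction scheme of \S\ref{sect:repl_M_by_W}: it proves $\ShAWV$ is always $\contWW{r}{r}$-open (Lemma~\ref{lm:ShAWV_open}, whose explicit inverse is precisely your section formula $\SectShA(g)(x)=p_2(g(x))-s_x$), shows that $\Wr{r}$-openness of $\imShAWV$ in $\imShAV$ is equivalent to isolatedness of $P_1(\zer)$ (Lemma~\ref{lm:imShAW_P10_isol}), and then feeds both facts into Theorem~\ref{th:charact_sh_open}(B). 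You collapse this chain: for the ``if'' direction you hand a $\contWW{r}{r}$-continuous section on an explicit $\Wr{r}$-neighbourhood of $i_{\Vman}$ directly to Theorem~\ref{th:sh-open-12}(3), never invoking Theorem~\ref{th:charact_sh_open}; for the ``only if'' direction, where the paper constructs a sequence of classes $A_i\ne A_0$ with $\ShAV(\bfunc_i)\to i_{\Vman}$ in $\Wr{r}$, you instead pull the $\Wr{r}$-open set $\Nbh_0=\ShAV(\{\sup_{\Vman}|\afunc|<3\eps\}\cap\fVW)$ back along the $\Wr{r}$-continuous map $\iota(h)(x)=(h(x),s_x)$, relying on the observation that $\iota$ sends every value of $P_1$ back into $\imShAV$. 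This pullback is a genuine alternative to the paper's limit argument and is arguably cleaner: it avoids having to re-normalize representatives so that $P_2(\bfunc_i)=P_2(\zer)$ (a step the paper handles a bit cavalierly), and it transfers the openness hypothesis from $\imShAV\subset\Ci{\Vman}{\Wman}$ to $\Ci{\Vman}{\RRR^{m-1}}$ in one move. Overall: same strategy, more self-contained execution; what the paper's organization buys in return is that its Lemma~\ref{lm:imShAW_P10_isol} isolates a reusable ``$\imShAWV$ open $\iff$ $P_1(\zer)$ isolated'' equivalence for use elsewhere.
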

\begin{proof}
(1)
We need the following lemma:
\begin{lemma}\label{lm:main-prop-fVW}
Let $\afunc,\bfunc\in\fVW$.
Then one of the following conditions holds true:
\begin{enumerate}
 \item[\rm(i)]
$|\afunc(x)-\bfunc(x)|<2\eps$ for all $x\in\Vman$,
 \item[\rm(ii)]
$\afunc(x)-\bfunc(x)>6\eps$ for all $x\in\Vman$.
 \item[\rm(iii)]
$\bfunc(x)-\afunc(x)>6\eps$ for all $x\in\Vman$.
\end{enumerate}
Moreover, condition {\rm(i)} implies that 
\begin{enumerate}
 \item[\rm(iv)]
$P_1(\bfunc)=P_1(\afunc)$ \  and \
$P_2(\bfunc)=P_2(\afunc) + \bfunc - \afunc$.
\end{enumerate}
\end{lemma}
\begin{proof}
Define the following three open, mutually disjoint subsets of $\Vman$:
 \begin{itemize}
  \item[]
 $K_{1}=\{x\in\Vman \ : \ |\afunc(x)-\bfunc(x)|<2\eps \}$,
  \item[]
 $K_{2}=\{x\in\Vman \ : \ \afunc(x)-\bfunc(x)>6\eps \}$,
  \item[]
 $K_{3}=\{x\in\Vman \ : \ \bfunc(x)-\afunc(x)>6\eps \}$.
\end{itemize}
Let $x\in\Vman$ and suppose that $|\bfunc(x)-\afunc(x)|\geq2\eps$, i.e. $x\not\in K_1$.
Since $\Wman$ is the central $\eps$-flow-box of a $4\eps$-flow-box, and $$\ShAV(\afunc)(\Vman)\,\cup\,\ShAV(\bfunc)(\Vman)\,\subset\,\Wman,$$ we see that $|\bfunc(x)-\afunc(x)|>6\eps$, i.e.\! $x\in K_2 \cup K_3$.
Thus $\Vman=K_1\cup K_2 \cup K_3$.
Since $\Vman$ is connected and $K_i$ are open and disjoint, we obtain that $\Vman$ coincides with one of them.

(i)$\Rightarrow$(iv)
Denote $\amap=\ShAV(\afunc)$, $\bmap=\ShAV(\bfunc)$, and $\tau(x)=\bfunc(x)-\afunc(x)$.
Then 
\begin{equation}\label{equ:g_AFlow_f_tau}
\bmap(x) = \AFlow(x,\bfunc(x)) = \AFlow\bigl(\AFlow(x,\afunc(x)),\bfunc(x)-\afunc(x)\bigr) = \AFlow\bigl(\amap(x), \tau(x)\bigr)
\end{equation}
for all $x\in\Vman$.
Suppose that $|\tau|<2\eps$ on all of $\Vman$.
Recall that $\AFlow(y,s;t) = (y,s+t)$ whenever $s,t,s+t\in(-4\eps,4\eps)$ and $y\in\RRR^{m-1}$.
Since $|P_2(\afunc)|,|P_2(\bfunc)|<\eps$, we obtain that $|P_2(\afunc)+\tau|<3\eps$ and 
$$
(P_1(\bfunc),P_2(\bfunc)) = \bmap \stackrel{\eqref{equ:g_AFlow_f_tau}}{=\!=\!=}
\AFlow(P_1(\afunc),P_2(\afunc); \tau)=
(P_1(\afunc),P_2(\afunc)+ \tau).
$$
Hence 
$P_1(\bfunc)=P_1(\afunc)$ \  and \
$P_2(\bfunc)=P_2(\afunc) + \bfunc - \afunc$.
\end{proof}

\begin{corollary}\label{cor:classes_sim}
Define the following \myemph{equivalence\/} relation on $\fVW$ by 
$$\afunc\sim\bfunc \qquad \text{if and only if} \qquad |\afunc-\bfunc|<2\eps.$$
Then every class is $\Wr{0}$-open and by {\rm (iv)} of Lemma~\ref{lm:main-prop-fVW} the map $P_1$ is defined on the equivalence classes, whence $P_1$ is locally constant.

Moreover,  there is a well-defined strict order on the equivalence classes of $\sim$: if $A$ and $B$ are two distinct classes of $\sim$ then
$$
A > B
\qquad \text{if and only if} \qquad 
\afunc-\bfunc>6\eps
$$
on $\Vman$ for some $\afunc\in A$ and $\bfunc\in B$.
Hence there are at most countable many classes of $\sim$, and therefore the image of $P_1$ is at most countable.
 \end{corollary}
\begin{proof}
We have only to show that the definition of ``$>$'' does not depend on a particular choice of $\afunc$ and $\bfunc$.
Let $\afunc'\in A$ and $\bfunc'\in B$ be another functions.
Then 
$$
\afunc'-\bfunc' =
(\afunc'-\afunc) + (\afunc-\bfunc) + (\bfunc-\bfunc') > 
-2\eps + 6\eps - 2\eps =2\eps.
$$
Hence by Lemma~\ref{lm:main-prop-fVW}, $\afunc'-\bfunc' > 6\eps$ as well.
\end{proof}

This corollary proves (1).

(2) Let $A_{0}$ be the equivalence class of $\sim$ containing the zero function $\zer$, i.e. $A_{0}=\{ \afunc\in\fVW : |\afunc|<2\eps \}$.
Then it is easy to see that
$$
A_0 = \funcAWV, \qquad \text{whence} \qquad
\imShAWV=\ShAWV(A_0)=\ShAV(A_0).
$$
Thus we obtain the following commutative diagram
{\footnotesize
$$
 \xymatrix{ A_0 = \funcAWV  \  \ar[d]^{\ShAWV=\ShAV} \ar@{^{(}->}[r] & \quad\fVW\quad   \ar[d]^{\ShAV}  \ar@{^{(}->}[r] & \ \funcAV   \ar[d]^{\ShAV} \\
 \ShAV(A_0)=\imShAWV   \  \ar[d]_{p_1^{*}}   \ar@{^{(}->}[r] &\  \ShAV(\fVW)=\imShAV\cap\Ci{\Vman}{\Wman} \ \ar[d]^{p_1^{*}} \ar@{^{(}->}[r] & \ \imShAV \\
P_1(A_0)  \ \ar@{^{(}->}[r]     & \im P_1 & 
}
 $$}
\begin{lemma}\label{lm:imShAW_P10_isol}
The image $\imShAWV$ is $\Wr{r}$-open in $\imShAV$ for some $r\in\Nz$ if and only if $P_1(\zer)=P_1(A_0)$ is an isolated point of the image $\im P_1 =P_1(\fVW) \subset \Ci{\Vman}{\RRR^{m-1}}$ with respect to the $\Wr{r}$ topology.
\end{lemma}
\begin{proof}
\myemph{Sufficiency.}
Suppose that $P_1(\zer)$ is an isolated point of $\im P_1$ in the $\Wr{r}$ topology, i.e. $P_1(\zer)$ is a $\Wr{r}$-open subset of $\im P_1$.
Since $p_1^{*}$ is $\contWW{r}{r}$-continuous, we obtain that $\imShAWV =\ShAV(A_0)$ is $\Wr{r}$-open in $\ShAV(\fVW)$ being $\Wr{r}$-open in $\imShAV$.
Hence $\imShAWV$ is $\Wr{r}$-open in $\imShAV$ as well.

\myemph{Necessity.} 
Conversely, suppose that $P_1(\zer)$ is not isolated in $\im P_1$ with the $\Wr{r}$ topology.
We will construct a sequence of functions $\{\bfunc_i\}$ such that 
\begin{enumerate}
 \item[\rm(i)]
$P_1(\bfunc_i)\not= P_1(\zer)$,
 \item[\rm(ii)]
the sequence $\{\ShAV(\bfunc_i)\}$ converges to $i_{\Vman}:\Vman \subset\Mman$ in the $\Wr{r}$ topology.
\end{enumerate}
It will follow from (i) that $\ShAV(\bfunc_i)\not\in\imShAWV=\ShAV(A_0)$, and in particular $\bfunc_i\not\in A_0 = \funcAWV$ for all $i\in\NNN$.
Then from (ii) we will obtain that $\imShAWV=\ShAWV(A_0)$ is not a $\Wr{r}$-neighbourhood of $i_{\Vman}$ in $\imShAV$, i.e. $\imShAWV$ is not $\Wr{r}$-open in $\imShAV$.

Since $P_1(\zer)$ is not isolated in $\im P_1$, there exists a sequence of classes $\{A_i\}$ distinct from $A_0$ such that their images $P_1(A_i)$ converge to $P_1(A_0)$ in the $\Wr{r}$ topology.

It is easy to see that for every $i\in\NNN$ there exists a function $\bfunc_i\in A_i$ such that $P_2(\bfunc_i)(x,s)=P_2(\zer)(x,s)=s$.
Indeed, take any $\bfunc'_i\in A_i$ and set $\bfunc_i=\bfunc'_i- P_2(\bfunc'_i)$.

We claim that the sequence $\{\bfunc_i\}_{i\in\NNN}$ satisfies conditions (i) and (ii).
Property (i) holds since $P_1(\bfunc_i)=P_1(A_i)\not= P_1(A_0)$ for all $i\in\NNN$.

Moreover, we have that $P_2(\bfunc_i)=P_2(\zer)=p_2\circ i_{\Vman}$ and $\{P_1(\bfunc_i)\} \subset \Ci{\Vman}{\RRR^{m-1}}$ converges to $P_1(\zer)=p_1\circ i_{\Vman}$ in $\Wr{r}$ topology.
Hence $\ShAV(\bfunc_i)$ converges to $i_{\Vman}$ with respect to the $\Wr{r}$ topology.
This proves (ii).
\end{proof}

\begin{lemma}\label{lm:ShAWV_open}
The map $\ShAWV$ is $\contWW{r}{r}$-open  for all $r\geq0$. 
\end{lemma}
\begin{proof}
Notice that 
$$\funcAWV=\{\afunc\in \Ci{\Vman}{\RRR} \ : \ |\afunc(y,s)|<\eps, \ |s+\afunc(y,s)|<\eps  \},$$
and 
$$
\ShAWV(\afunc)(y,s) = (y,\afunc(y,s)+s).
$$
Hence $$\imShAWV=\{ \amap\in \Ci{\Vman}{\Wman} \ : \ p_1\circ \amap(y,s)=y, \ |p_2\circ \amap|<\eps\}.$$
and the inverse map $\ShAWV^{-1}:\imShAWV\to \funcAWV$ is given by
$$
\ShAWV^{-1}(\amap)(y,s) = p_2\circ\amap(y,s) - s.
$$
Evidently, this map is $\contWW{r}{r}$-continuous for all $r\in\Nzi$, whence $\ShAWV$ is $\contWW{r}{r}$-open.
\end{proof}

Now statement (2) follows from \ref{lm:imShAW_P10_isol}, \ref{lm:ShAWV_open}, and statement (B) of Theorem~\ref{th:charact_sh_open}.
Theorem~\ref{th:cond_open_imShAWV_reg} is completed.
\end{proof}

\subsection{Periodic case}\label{sect:recurrence_map}
Let $z$ be a periodic point of $\AFld$.
We will show that for a sufficiently small flow-box at $z$ \myemph{the images of $\sim$-classes defined in~\ref{cor:classes_sim} can be described in terms of its first recurrence map only}, see~\eqref{equ:PA_pRkd}.

Let $\theta$ be the period of $z$, $\trans$ be a codimension one open disk which transversally intersects the orbit of $\orb_{z}$ at $z$, and $R:(\trans,z)\to(\trans,z)$ be the first recurrence map.
Let also $\eps<\theta/5$ and $\eta:\Wman'\to\RRR^{m-1}\times(-4\eps,4\eps)$ be a $4\eps$-flow-box at $z$.
Decreasing $\Wman'$ we can assume, in addition, that 
\begin{equation}\label{equ:oz_Wprime}
\orb_{z}\cap\Wman'=\AFlow(z\times(-4\eps,4\eps)),
\end{equation}
$\trans\subset\Wman$, and $\trans$ is transversal to all orbits of $\Wman'$, so that the restriction $p_1|_{\trans}:\Wman'\supset\trans\to\RRR^{m-1}$ is a diffeomorphism.
Put 
$$
d= (p_1|_{\trans})^{-1}  \circ p_1 :\Vman\xrightarrow{~p_1~}\RRR^{m-1} \xrightarrow{~(p_1|_{\trans})^{-1}~} \trans.
$$
Then $d$ preserves the first coordinate.

Let $\Wman \subset\Wman'$ be the central $\eps$-flow-box, $\Vman\subset\Wman$ be a connected $\Dm$-neigh\-bourhood of $z$, and $A$ be the $\sim$-class of $\fVW$.
Then it follows from~\eqref{equ:oz_Wprime} that there exists a unique $k\in\ZZZ$ such that $|\afunc(z)-k\theta|<\eps$ for every $\afunc\in A$, and
\begin{equation}\label{equ:PA_pRkd}
P_1(A) = p_1 \circ R^k \circ d:\Vman\xrightarrow{~d~} \trans\xrightarrow{~R^k~}\trans\xrightarrow{~p_1~}\RRR^{m-1}.
\end{equation}

\subsection{Proof of Theorem~\ref{th:ShAVopen_reg}.}\label{sect:proof_th:ShAVopen_reg}
Due to Theorem~\ref{th:cond_open_imShAWV_reg} it suffices to find a $4\eps$-flow-box neighbourhood $\Wman'$ of $z$ such that for every connected $\Dm$-neighbourhood $\Vman$ of $z$, contained in the central $\eps$-flow-box neighbourhood $\Wman$, the image $P_1(\zer)$ is an isolated point of $P_1(\fVW)$ in the $\Wr{r}$ topology, where $r\geq0$ in the cases (a) and (b), and $r\geq1$ in the case (c).

(a) Suppose that $z$ is non-recurrent.
Then there exist $\eps>0$, and a $4\eps$-flow-box neighbourhood $\Wman'$ of $z$ such that $\AFlow(z,t)\in\Wman'$ iff $|t|<4\eps$.
Let $\Wman\subset\Wman'$ be the central $\eps$-flow-box and $\Vman\subset\Wman$ be a connected $\Dm$-neighbourhood.
Then it follows from Lemma~\ref{lm:main-prop-fVW} that there exists only one equivalence class of $\sim$.
Hence $\im P_1 = P_1(\zer)$, and thus $P_1(\zer)$ is isolated in $\im P_1$ in any of $\Wr{r}$ topologies.

Suppose that $z$ is periodic.
Let $\trans$ be a codimension one open disk which transversally intersects the orbit of $\orb_{z}$ at $z$, $R:(\trans,z)\to(\trans,z)$ be the first recurrence map, and $\Wman'$, $\Wman$, and $\Vman$ be such as in \S\ref{sect:recurrence_map}.

(b) If the germ of $R$ at $z$ is periodic of some period $q$, then we can assume that there exists an open neighbourhood $E \subset \trans$ of $z$ such that $R(E)=E$ and $d(\Vman)\subset E$.
Then it follows from~\eqref{equ:PA_pRkd} that the image of $P_1$ is finite and consists of $q$ points.
Hence $P_1(\zer)$ is isolated in $\im P_1$ in any of $\Wr{r}$ topologies.

(c)
Suppose that the tangent map $T_z R: T_z \trans\to T_z \trans$ has at least one non-zero eigen vector $v\in T_{z} \trans \subset T_{z}\Vman$ with eigen value $\lambda$ such that $|\lambda|\not=1$.
Then $T_{z}R^k(v) = \lambda^{k} v$.
Let $T_{z}p_1: T_{z} \trans\to T_{0}\RRR^{m-1}$ be the tangent map of $p_1$ at $z$ and $u=T_{z}p_1(v)$.
Then for every class $A$ corresponding to some $k\in\ZZZ$ we have that 
$$T_z(P_1(A)) \stackrel{~\eqref{equ:PA_pRkd}~}{=\!=\!=} T_z(p_1 \circ R^k \circ d)(v) = \lambda^{k} u.$$
Since $|\lambda|\not=1$, we see that $P(\zer)$ is isolated in $\im P_1$ in any of $\Wr{r}$ topologies for $r\geq1$.
\hfill \hfill \qed

\section{Openness of $\imShAWV$ in $\imShA$. Singular case}\label{sect:open_im_in_im_fixpt}
Let $\Wman\subset\Mman$ be an open subset and $\Vman\subset\Wman$ be a $\Dm$-submanifold.
Similarly to~\eqref{equ:UUU} put
\begin{equation}\label{equ:UUU1}
\fVW\;=\;\{\afunc\in\funcAV \ : \ \ShAV(\afunc)(\Vman) \subset \Wman \}.
\end{equation}
Then \ $\funcAWV \subset\fVW$ \ and 
\begin{equation}\label{equ:ShAVU_imShAV_CVW1}
\ShAV(\fVW) =\imShAV\cap \Ci{\Vman}{\Wman}
\end{equation}
is a $\Wr{0}$-open neighbourhood of $i_{\Vman}$ in $\imShAV$.
We will now present a sufficient condition which guarantees that $\funcAWV =\fVW$.
Due to~\eqref{equ:ShAVU_imShAV_CVW1} this will imply that 
\begin{equation}\label{equ:imShAWV_open_in_imShAV}
\imShAWV\defeq\ShAV(\funcAV)=\ShAV(\fVW)=\imShAV\cap \Ci{\Vman}{\Wman}
\end{equation}
is $\Wr{0}$-open in $\imShAV$.
In particular we prove~\eqref{equ:imShAWV_open_in_imShAV} for vector fields of types $\typeZero$, $\typeLinear$, $\typeHamilt$ and their products.

\begin{definition}\label{defn:prQ}
Let $\Vman\subset\Wman$ be a subset.
We will say that a pair $(\Wman,\Vman)$ has \myemph{proper boundary intersection} property (\prQ) if for any $x\in\Vman$, $a\in\RRR$, and $T>1$ such that 
\begin{itemize}
 \item $\AFlow(x,\tau a) \in \Wman$ for $\tau\in[0,1) \cup\{T\}$, but 
 \item $\AFlow(x,a) \in \Fr(\Wman)=\overline{\Wman}\setminus\Wman$,
\end{itemize}
there exists $\tau'\in(1,T)$ such that $\AFlow(x,\tau' a) \not\in\overline{\Wman}$, see Figure~\ref{fig:bi_prop}a).
\end{definition}
Roughly speaking, if the orbit $\orb_x$ of any $x\in\Vman$ leaves $\Wman$ for a certain amount of time and comes back into $\Wman$, then during this time it must leave the closure $\overline{\Wman}$.
In Figure~\ref{fig:bi_prop}b) the pair $(\Wman,\Vman)$ does not satisfy \prQ
\begin{center}
\begin{figure}[ht]
\begin{tabular}{ccc}
\includegraphics[height=2cm]{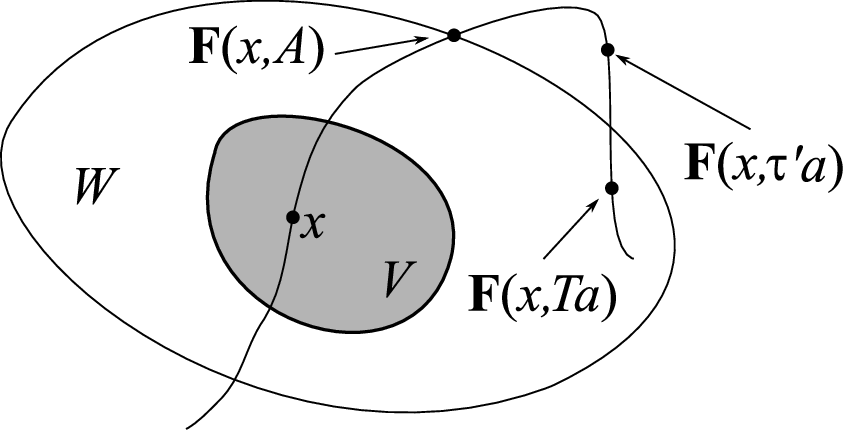} &
\qquad \qquad &
\includegraphics[height=2cm]{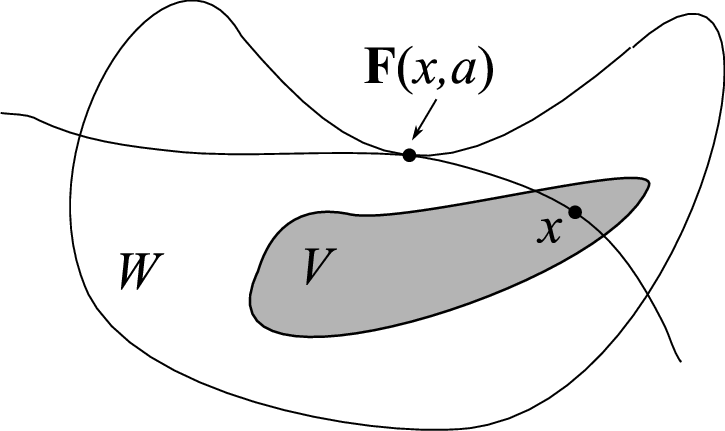}  \\
a) & & b)
\end{tabular}
\caption{}
\protect\label{fig:bi_prop}
\end{figure}
\end{center}
The following lemma is the crucial implication of \prQ:
\begin{lemma}\label{lm:prQ_singpt}
Let $z\in\Wman$, $\AFld(z)=0$, and $\Vman \subset\Wman$ be a connected $\Dm$-neigh\-bourhood of $z$ such that $(\Wman,\Vman)$ has \prQ\
Then $\funcAWV=\fVW$.
\end{lemma}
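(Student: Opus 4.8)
The plan is to establish the nontrivial inclusion $\fVW\subset\funcAWV$ by a connectedness argument on $\Vman$. Recall that $\afunc\in\funcAWV$ precisely when, for every $x\in\Vman$, the \emph{entire} orbit arc $\{\AFlow(x,ts):t\in[0,1]\}$ (with $s=\afunc(x)$) lies in $\Wman$, whereas $\afunc\in\fVW$ only asks that the \emph{endpoint} $\AFlow(x,\afunc(x))$ lie in $\Wman$; hence $\funcAWV\subset\fVW$ is automatic and only the reverse needs proof. So I would fix $\afunc\in\fVW$, set
$$
\Vman_0=\{\,x\in\Vman:\AFlow(x,ts)\in\Wman\ \text{for all }t\in[0,1],\ \text{where }s=\afunc(x)\,\},
$$
and show $\Vman_0=\Vman$ by checking that $\Vman_0$ is nonempty, open, and closed in the connected space $\Vman$.

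Nonemptiness is where the hypothesis $\AFld(z)=0$ enters: the orbit of the fixed point $z$ is constant, so its arc is $\{z\}\subset\Wman$, i.e.\ $z\in\Vman_0$. Openness is a routine ``tube lemma'' step: if $x_0\in\Vman_0$, then the arc of $x_0$ is a compact subset of the open sets $\Wman$ and $\domA$, so joint continuity of $\AFlow$ together with continuity of $\afunc$ yields a neighbourhood of $x_0$ contained in $\Vman_0$. Neither step uses \prQ.

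The main point — and the only step using \prQ\ — is closedness of $\Vman_0$ in $\Vman$. Suppose $x_0\in\Vman\cap\overline{\Vman_0}$ but $x_0\notin\Vman_0$; reversing time if necessary I may assume $a:=\afunc(x_0)>0$ (the value $a=0$ being impossible, since then $x_0\in\Vman_0$). As $x_0\notin\Vman_0$, the set $G=\{s\in[0,a]:\AFlow(x_0,s)\in\Wman\}$ is a proper relatively open subset of $[0,a]$ containing both endpoints; letting $s_1$ be the supremum of those $s$ with $[0,s]\subset G$, one checks that $0<s_1<a$, that $\AFlow(x_0,ts_1)\in\Wman$ for $t\in[0,1)$, and that $\AFlow(x_0,s_1)\in\Fr(\Wman)$. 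Applying \prQ\ to $x_0$ with the role of ``$a$'' in Definition~\ref{defn:prQ} played by $s_1$ and with $T=a/s_1>1$ (note $\AFlow(x_0,Ts_1)=\AFlow(x_0,a)\in\Wman$) produces $\tau'\in(1,T)$ with $s^\ast:=\tau's_1\in(s_1,a)$ and $\AFlow(x_0,s^\ast)\notin\overline{\Wman}$. Since $\Mman\setminus\overline{\Wman}$ is open and $(x_0,s^\ast)\in\domA$, there are a neighbourhood $V\ni x_0$ in $\Vman$ and $\delta>0$ with $\AFlow(x,s)\notin\Wman$ whenever $x\in V$ and $|s-s^\ast|<\delta$; shrinking $V$ I may also require $|\afunc(x)-a|<\min(\delta,\,a-s^\ast)$ on $V$. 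Now choose $x_1\in V\cap\Vman_0$, possible since $x_0\in\overline{\Vman_0}$. Then $\afunc(x_1)>a-(a-s^\ast)=s^\ast>0$, so $s^\ast$ lies on the orbit arc of $x_1$ and hence $\AFlow(x_1,s^\ast)\in\Wman$ because $x_1\in\Vman_0$; but $x_1\in V$ forces $\AFlow(x_1,s^\ast)\notin\Wman$. This contradiction shows $\Vman_0$ is closed, whence $\Vman_0=\Vman$ and $\afunc\in\funcAWV$.

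I expect the only real subtlety to be in bookkeeping the rescaled form of \prQ\ — identifying the first exit time $s_1$ with the parameter ``$a$'' of Definition~\ref{defn:prQ} and setting $T=a/s_1$ — and in making sure the point $s^\ast$ supplied by \prQ\ still sits strictly inside the orbit arc of the approximating point $x_1$, which is exactly what the choice $|\afunc(x)-a|<a-s^\ast$ guarantees.
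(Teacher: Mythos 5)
Your proof is correct and follows essentially the same strategy as the paper's: fix $\afunc\in\fVW$, consider the set of $x\in\Vman$ whose whole orbit arc up to time $\afunc(x)$ stays in $\Wman$, note that the fixed point $z$ lies in it and that it is open, and then use \prQ\ at the first exit time to rule out boundary points. The only cosmetic differences are that the paper shows the complement of this set is open (rather than arguing closedness by contradiction) and parametrizes the escape point by the fractional parameter $\tau'\afunc(y)$ rather than by the absolute time $s^\ast$, which lets it avoid the extra bookkeeping $\afunc(x_1)>s^\ast$; there is also no need to reverse time, since Definition~\ref{defn:prQ} already allows $a<0$.
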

\begin{proof}
Let $\afunc\in\fVW$, so $\AFlow(x, \afunc(x)) \in \Wman$ for all $x\in\Wman$.
We have to show that $\afunc\in\funcAWV$.
It suffices to verify that the following subset of $\Vman$:
$$\Ka = \{ x\in\Vman \, : \, \AFlow(x, s\afunc(x)) \in \Wman \ \text{for all} \ s\in I=[0,1] \}$$
coincides with all of $\Vman$.
Notice that $z\in\Ka$.
Moreover, since $\Wman$ is open in $\Mman$, it follows that $\Ka$ is open in $\Vman$.
Therefore it suffices to show that $\Vman\setminus\Ka$ is open in $\Vman$ as well.
Since $\Vman$ is connected, we will get that $\Ka=\Vman$.

Let $x\in\Vman\setminus\Ka$.
Then $x=\AFlow(x,0\cdot\afunc(x))$ and $\AFlow(x,1\cdot\afunc(x))$ belong to $\Wman$ and there exists $\tau_0\in(0,1)$ such that $\AFlow(x,\tau\,\afunc(x))\in\Wman$ for all $\tau\in[0,\tau_0)$ while $\AFlow(x,\tau_0\,\afunc(x))\in\Fr(\Wman)$.
Now it follows from \prQ\ for the pair $(\Wman,\Vman)$ that there exists $\tau'\in(\tau_0,1)$ such that $\AFlow(x,\tau'\,\afunc(x))\not\in\overline{\Wman}$.

Then there is an open neighbourhood $\Vman_{x}$ of $x$ in $\Vman$ such that 
$$\AFlow(y,\tau'\,\afunc(y))\not\in\overline{\Wman}$$ for all $y\in\Vman_{x}$.
Hence $\Vman_x\subset\Vman\setminus\Ka$, and thus $\Vman\setminus\Ka$ is open in $\Vman$.
\end{proof}

The following simple lemma is left for the reader.
\begin{lemma}\label{lm:prop_prQ}
{\rm (1)}~If $(\Wman,\Vman)$ has \prQ, then for every subset $\Vman'\subset\Vman$ the pair $(\Wman,\Vman')$ also has this property.

{\rm (2)}~Let $\Wman'$ be an open neighbourhood of $\overline{\Wman}$.
Then $(\Wman,\Vman)$ has property \prQ\ with respect to $\AFld$ iff it has this property with respect to the restriction $\AFld|_{\Wman'}$.
In other words, \prQ\ is determined by the behavior of $\AFld$ on arbitrary small neighbourhood of $\overline{\Wman}$ only.

{\rm (3)}~For every $x\in\Wman$ denote by $\gamma_x$ the connected component of $\orb_x\cap\Wman$ containing $x$.
Suppose that $\Fr(\Wman)$ is a smooth submanifold of $\Mman$ and for every $x\in\Vman$ its orbit $\orb_x$ is transversal to $\Fr(\Wman)$ at each $y\in\overline{\gamma_x}\cap\Fr(\Wman)$ whenever such a point exists.
Then $(\Wman,\Vman)$ has \prQ
\end{lemma}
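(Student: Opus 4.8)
The plan is to handle the three parts separately, parts~(1) and~(2) being essentially formal and part~(3) carrying the geometric content. For part~(1) I would simply remark that property \prQ\ of Definition~\ref{defn:prQ} is a condition quantified over all points $x\in\Vman$ (with the auxiliary data $a\in\RRR$, $T>1$ attached to each such $x$); hence if it holds for every $x\in\Vman$ and $\Vman'\subset\Vman$, it holds a fortiori for every $x\in\Vman'$, so $(\Wman,\Vman')$ has \prQ\ as well.

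For part~(2) the two key remarks are: first, since $\overline{\Wman}$ is closed in $\Mman$ and contained in the open set $\Wman'$, it is closed in $\Wman'$, coincides with the closure of $\Wman$ in $\Wman'$, and has the same frontier $\Fr(\Wman)=\overline{\Wman}\setminus\Wman\subset\Wman'$ relative to $\Wman'$; second, the flows of $\AFld$ and of $\AFld|_{\Wman'}$ agree along any orbit segment through $x$ that stays inside $\Wman'$, the maximal orbit of $x$ for $\AFld|_{\Wman'}$ being the connected component through $0$ of $\{t:\AFlow(x,t)\in\Wman'\}$. The point is then that in Definition~\ref{defn:prQ} the hypotheses on $(x,a,T)$ only involve the values $\AFlow(x,\tau a)$ for $\tau\in[0,1]\cup\{T\}$, all of which lie in $\Wman\cup\Fr(\Wman)=\overline{\Wman}\subset\Wman'$. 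For ``\prQ\ for $\AFld$ $\Rightarrow$ \prQ\ for $\AFld|_{\Wman'}$'' I would note that if $(x,a,T)$ satisfies the hypotheses for $\AFld|_{\Wman'}$ then $\tau a$ lies in the maximal $\AFld|_{\Wman'}$-orbit interval of $x$ for all $\tau\in[0,T]$, so the same data satisfy the hypotheses for $\AFld$ and the resulting $\tau'\in(1,T)$ serves both flows. For the converse I would argue by contradiction: if no $\tau'\in(1,T)$ had $\AFlow(x,\tau' a)\notin\overline{\Wman}$, the whole arc $\{\AFlow(x,\tau a):\tau\in[0,T]\}$ would stay in $\overline{\Wman}\subset\Wman'$, hence be an $\AFld|_{\Wman'}$-orbit, and \prQ\ for $\AFld|_{\Wman'}$ would produce a $\tau'\in(1,T)$ with $\AFlow(x,\tau' a)\notin\overline{\Wman}$ --- contradiction.

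For part~(3) I would proceed as follows. Given $x\in\Vman$, $a\in\RRR$, $T>1$ satisfying the hypotheses of Definition~\ref{defn:prQ}, note that $\{\AFlow(x,\tau a):\tau\in[0,1)\}$ is a connected subset of $\Wman$ containing $x$, hence lies in $\gamma_x$, so $y_0\defeq\AFlow(x,a)\in\overline{\gamma_x}\cap\Fr(\Wman)$ and the transversality hypothesis applies at $y_0$. Since the $1$-dimensional $\orb_x$ is transversal to $\Fr(\Wman)$ at $y_0$, the submanifold $\Fr(\Wman)$ must be a hypersurface, $\AFld(y_0)\neq0$, and $\Wman$ lies locally on one side of $\Fr(\Wman)$ near $y_0$; so I may choose a neighbourhood $\Omega\ni y_0$ in $\Mman$ and a smooth function $\rho:\Omega\to\RRR$ with $d\rho\neq0$ on $\rho^{-1}(0)$ such that $\Fr(\Wman)\cap\Omega=\{\rho=0\}$, $\Wman\cap\Omega=\{\rho<0\}$, $\overline{\Wman}\cap\Omega=\{\rho\le0\}$. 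Then I would study the smooth function $\phi(\tau)=\rho(\AFlow(x,\tau a))$ for $\tau$ near $1$: one has $\phi(1)=\rho(y_0)=0$ and $\phi(\tau)<0$ for $\tau\in[1-\delta,1)$ (since $\AFlow(x,\tau a)\in\Wman\cap\Omega$ there), whence $\phi'(1)\ge0$; on the other hand $\phi'(1)=a\,d\rho_{y_0}\bigl(\AFld(y_0)\bigr)$, which is nonzero precisely because transversality means $\AFld(y_0)\notin\ker d\rho_{y_0}$ and $a\neq0$ (note $a\neq0$ since $x\in\Wman$ while $\AFlow(x,a)\in\Fr(\Wman)$). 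Hence $\phi'(1)>0$, so $\phi>0$ on some interval $(1,1+\delta)$, i.e.\ $\AFlow(x,\tau a)\in\Omega\setminus\overline{\Wman}$ there; any $\tau'\in\bigl(1,\min(T,1+\delta)\bigr)$, a non-empty interval since $T>1$, then satisfies $\AFlow(x,\tau' a)\notin\overline{\Wman}$, which is exactly what \prQ\ requires.

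I expect the only delicate step to be, within part~(3), the passage from ``$\orb_x$ is transversal to $\Fr(\Wman)$ at $y_0$'' to ``$\orb_x$ actually leaves $\overline{\Wman}$ just after the parameter $a$'': this is precisely where one uses that $\Fr(\Wman)$ is a genuine smooth hypersurface and that $\Wman$ sits locally on one side of it, information I would package into the local defining function $\rho$ above. Everything else --- the one-variable sign analysis of $\phi$ in~(3), and parts~(1) and~(2) --- is routine manipulation of the definition of \prQ\ and of flows of restricted vector fields.
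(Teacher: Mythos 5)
The paper ``leaves this lemma for the reader,'' so there is no proof to compare against; what follows is an assessment of your argument on its own terms.

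Parts (1) and (2) are fine. Your observation that the \prQ\ condition is a universal quantification over $x\in\Vman$ settles (1) immediately, and in (2) the key points are correctly identified: all the data in Definition~\ref{defn:prQ} live in $\overline\Wman\subset\Wman'$, and the contradiction argument for the nontrivial direction correctly exploits the fact that an arc staying in $\overline\Wman$ is also an $\AFld|_{\Wman'}$-arc. (A small remark: in the forward direction, the statement ``$\tau a$ lies in the maximal $\AFld|_{\Wman'}$-orbit interval of $x$ for all $\tau\in[0,T]$'' uses that the domain of a maximal solution is an interval and both $0$ and $Ta$ lie in it; worth saying, since it is what makes the full arc available.)

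Part (3) has a gap. You assert, as a consequence of transversality, that ``$\Wman$ lies locally on one side of $\Fr(\Wman)$ near $y_0$,'' and the existence of the local defining function $\rho$ with $\Wman\cap\Omega=\{\rho<0\}$ and $\overline\Wman\cap\Omega=\{\rho\le0\}$ rests entirely on that. But one-sidedness does \emph{not} follow from $\Fr(\Wman)$ being a smooth hypersurface met transversally by $\orb_x$. Concretely, take $\Mman=\RRR^2$, $\AFld=\partial/\partial y$, $\Wman=\RRR^2\setminus(\RRR\times\{0\})$, and $\Vman$ a small closed disk about $(0,-1)$. Then $\Fr(\Wman)=\RRR\times\{0\}$ is a smooth hypersurface, and for $x=(0,-1)$ one has $\gamma_x=\{0\}\times(-\infty,0)$, $\overline{\gamma_x}\cap\Fr(\Wman)=\{(0,0)\}$, and $\orb_x$ is transversal to $\Fr(\Wman)$ there, so the hypotheses of (3) hold. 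Yet $(x,a,T)=((0,-1),1,2)$ satisfies the antecedent of Definition~\ref{defn:prQ} while $\AFlow(x,\tau a)\in\Wman\subset\overline\Wman$ for all $\tau\in(1,2)$, so \prQ\ fails. In other words, part~(3) as stated is literally false without some extra hypothesis guaranteeing that $\Wman$ sits locally on one side of $\Fr(\Wman)$ (e.g.\ that $\overline\Wman$ is a manifold with boundary $\Fr(\Wman)$). Your proof implicitly supplies that hypothesis but mislabels it as a deduction; make it an explicit assumption and the rest of the one-variable sign analysis of $\phi(\tau)=\rho(\AFlow(x,\tau a))$, including $a\neq0$, $\phi(1)=0$, $\phi<0$ just before $1$, $\phi'(1)=a\,d\rho_{y_0}(\AFld(y_0))\neq0$ hence $>0$, is correct. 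Note that in the paper's only application of (3), namely Lemma~\ref{lm:isol_block} where $\Wman$ is the interior of a compact $\Dm$-submanifold (an isolating block), one-sidedness holds automatically, so the gap is harmless there — but it is a genuine gap in the general statement.
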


\subsection{Isolating blocks.}
Statement (3) of Lemma~\ref{lm:prop_prQ} is relevant with one of the principal results of~\cite{ConleyEaston:TRAMS:1971}.
An open subset $\Uman\subset\Mman$ is an \myemph{isolating neighbourhood} for the flow $\AFlow$ if $\orb_x \not\subset\overline{\Uman}$ for all $x\in\Fr\Uman=\overline{\Uman}\setminus\Uman$.
A closed $\AFlow$-invariant subset $X\subset\Mman$ is \myemph{isolated}, if $X$ is the maximal invariant subset of some isolating neighbourhood $\Uman$ of $X$.

Let $\Wman$ be a compact $\Dm$-submanifold of $\Mman$ and $w=\overline{\Wman}\setminus\Int\Wman$ be its boundary.
Denote 
$$
\begin{array}{rcl}
w^{+} &=& \{x\in w \ : \ \exists\eps>0 \ \text{with} \ \AFlow(x\times(-\eps,0))\cap\Wman=\varnothing\}, \\
w^{-} &=& \{x\in w \ : \ \exists\eps>0 \ \text{with} \ \AFlow(x\times(0,\eps))\cap\Wman=\varnothing\}, \\
\tau  &=& \{x\in w \ : \ \text{$\AFld$ is tangent to $w$}\}.
\end{array}
$$
Then $\Wman$ is called an \myemph{isolating block\/} for $\AFlow$ if $w^{+}\cap w^{-}=\tau$ and $\tau$ is a smooth submanifold of $w$ with codimension one.
Thus $w^{+}$ and $w^{-}$ are submanifolds (possibly with corners is so is $\Wman$) of $w$ with common boundary $\tau$.
It follows that the interior of an isolating block is an isolating neighbourhood.
Moreover, from (3) of Lemma~\ref{lm:prop_prQ} we obtain
\begin{lemma}\label{lm:isol_block}
If $\Wman$ is an isolating block, then for any subset $\Vman\subset\Int\Wman$ the pair $(\Int\Wman,\Vman)$ has \prQ
\end{lemma}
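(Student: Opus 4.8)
The plan is to reduce Lemma~\ref{lm:isol_block} to statement~(3) of Lemma~\ref{lm:prop_prQ}, applied with the open set taken to be $\Int\Wman$ and with the given subset $\Vman\subset\Int\Wman$. Two hypotheses must then be checked: that $\Fr(\Int\Wman)$ is a smooth submanifold of $\Mman$, and that for every $x\in\Vman$ the orbit $\orb_x$ is transversal to $\Fr(\Int\Wman)$ at each point of $\overline{\gamma_x}\cap\Fr(\Int\Wman)$, where $\gamma_x$ is the connected component of $\orb_x\cap\Int\Wman$ containing $x$. The first is immediate: since $\Wman$ is a compact $\Dm$-submanifold its interior is dense, so $\overline{\Int\Wman}=\Wman$ and hence $\Fr(\Int\Wman)=\Wman\setminus\Int\Wman=w$, which is a smooth submanifold of $\Mman$ (with corners if $\Wman$ has corners) by the definition of an isolating block. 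So the whole content of the proof is the transversality statement, and the only delicate issue will be controlling which points of $w$ can lie in $\overline{\gamma_x}$.

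First I would record two elementary consequences of the block axiom $w^{+}\cap w^{-}=\tau$. \textbf{(a)} $w$ contains no singular point of $\AFld$: if $\AFld(y)=0$ with $y\in w\subset\Wman$, then $\orb_y\equiv y$ stays in $\Wman$ for all times, so $y$ belongs to neither $w^{+}$ nor $w^{-}$, hence $y\notin\tau$; but the zero vector is tangent to $w$, so $y\in\tau$, a contradiction. \textbf{(b)} If $y\in w$ and $\orb_y$ stays in $\Wman$ on a one-sided interval $(-\eps,0)$ (resp. $(0,\eps)$), then $y\notin w^{+}$ (resp. $y\notin w^{-}$), so by the block axiom $y\notin\tau$, i.e. $\AFld$ is not tangent to $w$ at $y$; combined with $\AFld(y)\ne0$ from~(a) and $w$ having codimension one in $\Mman$, this says precisely that $\orb_y$ is transversal to $w$ at $y$.

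The remaining step is to describe $\overline{\gamma_x}\cap w$ for $x\in\Vman$. Write $\gamma_x=\orb_x((c,d))$, where $(c,d)$ is the maximal subinterval of the domain of $\orb_x$ that contains $0$ and is mapped into $\Int\Wman$. If $y\in\overline{\gamma_x}\cap w$, pick $t_n\in(c,d)$ with $\orb_x(t_n)\to y$; since $\orb_x((c,d))\subset\Int\Wman$ is disjoint from $w\ni y$, the sequence $(t_n)$ cannot accumulate at an interior point of $(c,d)$, so $t_n\to c$ or $t_n\to d$. The case $d=b_x$ (right end of the maximal domain of $\orb_x$) is impossible: if $b_x<\infty$ then $\orb_x$ would remain in the compact set $\Wman$ as $t\to b_x$, contradicting the escape property of maximal ODE solutions, while if $b_x=\infty$ then $y$ would be an $\omega$-limit point of the single orbit $\orb_x$, hence a singular point of $\AFld$ lying on $w$, contradicting~(a); similarly $c\ne a_x$. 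Therefore $d$ (resp. $c$) is an interior point of the domain, $y=\orb_x(d)$ (resp. $y=\orb_x(c)$), and $\orb_x(t)\in\Int\Wman\subset\Wman$ for $t$ just below $d$ (resp. just above $c$). By~(b), $\orb_x$ is transversal to $w$ at $y$. Both hypotheses of Lemma~\ref{lm:prop_prQ}(3) now hold, so $(\Int\Wman,\Vman)$ has \prQ. I expect the main obstacle to be exactly this verification that $\overline{\gamma_x}$ meets $w$ only at genuine entrance/exit points of the flow (the escape-of-compacta argument together with the no-rest-points-on-$w$ observation~(a)); once that is in place, the lemma is a direct translation of the isolating-block axioms through Lemma~\ref{lm:prop_prQ}(3).
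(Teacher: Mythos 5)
Your overall plan --- feed the block structure into part~(3) of Lemma~\ref{lm:prop_prQ} with the open set $\Int\Wman$ --- is exactly the route the paper takes (the lemma is stated there as an immediate consequence of Lemma~\ref{lm:prop_prQ}(3) with no further elaboration), and your observations~(a) and~(b) are the right dictionary between the block axiom $w^{+}\cap w^{-}=\tau$ and the transversality hypothesis of Lemma~\ref{lm:prop_prQ}(3).

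There is, however, one false step in the control of $\overline{\gamma_x}\cap w$. To rule out $d=b_x=\infty$ you assert that a limit point $y\in w$ of $\{\orb_x(t)\}_{t\to\infty}$ ``would be an $\omega$-limit point of the single orbit $\orb_x$, hence a singular point of $\AFld$.'' That implication is false in general: $\omega$-limit points of a single orbit need not be rest points (consider the irrational flow on $T^2$ from \S\ref{sect:examples-nontriv-hol}, or an orbit spiralling onto a periodic orbit). The conclusion you want is still true, but the correct reason is different and uses the isolating-neighbourhood property rather than observation~(a): the set $\omega(x)$ is flow-invariant and contained in $\overline{\Int\Wman}=\Wman$, so if some $y\in\omega(x)\cap w$ then the entire orbit $\orb_y\subset\omega(x)\subset\overline{\Int\Wman}$; since $y\in w=\Fr(\Int\Wman)$, this contradicts the fact --- recorded in the paper just before the lemma --- that $\Int\Wman$ is an isolating neighbourhood, i.e.\ $\orb_y\not\subset\overline{\Int\Wman}$ for every $y\in\Fr(\Int\Wman)$. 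The symmetric replacement is needed to exclude $c=a_x=-\infty$, via $\alpha$-limit points. With this single repair your argument is sound.
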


\begin{theorem}{\rm\cite{ConleyEaston:TRAMS:1971}}\label{th:ConleyEaston:TRAMS:1971}
Let $X \subset\Mman$ be a closed isolated $\AFlow$ invariant subset and $\Uman\supset X$ be its isolating neighbourhood.
Then there exists an isolating block $\Wman$ such that $X\subset\Int\Wman\subset\Uman$.
\end{theorem}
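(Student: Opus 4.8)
The plan is to follow the classical construction of isolating blocks due to Conley and Easton. First I would replace $\Uman$ by a \emph{compact} $\Dm$-submanifold $\Nman$ with $X\subset\Int\Nman\subset\Nman\subset\Uman$, which exists because $X$ is compact and $\Uman$ is open; since $\Nman\subset\Uman$, the maximal $\AFlow$-invariant subset of $\Nman$ is contained in the maximal invariant subset of $\Uman$, namely $X$, while $X$ itself is invariant and lies in $\Int\Nman$, so the maximal invariant subset of $\Nman$ is exactly $X$. In particular $\Int\Nman$ is again an isolating neighbourhood of $X$. The block will then be produced as a regular sublevel set $\Wman=g^{-1}\bigl((-\infty,0]\bigr)$ of a suitably chosen smooth function $g$ defined on a neighbourhood of $\Nman$, with $0$ a regular value of $g$, $\Wman$ compact, and $X\subset\Int\Wman\subset\Int\Nman$.

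Writing $\dot g=dg(\AFld)$ and $\ddot g=d\dot g(\AFld)$ for the first two Lie derivatives of $g$ along $\AFld$, the essential requirement on $g$ will be the nondegeneracy condition
\begin{equation}\label{equ:isol-block-nondeg}
x\in\partial\Wman=g^{-1}(0),\quad \dot g(x)=0\ \Longrightarrow\ \ddot g(x)>0.
\end{equation}
Granting~\eqref{equ:isol-block-nondeg}, I would verify that $\Wman$ is an isolating block as follows. Along the orbit through a point $x\in\partial\Wman$ the function $t\mapsto g(\AFlow(x,t))$ vanishes at $t=0$; if $\dot g(x)\neq0$ it crosses $\partial\Wman$ transversally, so exactly one of its forward and backward halves immediately leaves $\Wman$, i.e.\ $x$ lies in exactly one of $w^{+},w^{-}$ according to the sign of $\dot g(x)$; whereas if $\dot g(x)=0$ then by~\eqref{equ:isol-block-nondeg} the point $t=0$ is a strict local minimum of $g(\AFlow(x,t))$, both halves of the orbit immediately leave $\Wman$, and $x\in w^{+}\cap w^{-}$. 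Hence $w^{+}\cap w^{-}=\tau=\{x\in\partial\Wman:\AFld(x)\in T_x\partial\Wman\}=\{\dot g=0\}\cap\partial\Wman$. Moreover at such $x$ the vector $\AFld(x)$ is tangent to $\partial\Wman$ and the derivative of $\dot g|_{\partial\Wman}$ in this direction equals $\ddot g(x)>0$, so $\dot g|_{\partial\Wman}$ is a submersion along $\tau$; by the implicit function theorem $\tau$ is a smooth codimension-one submanifold of $\partial\Wman$, and $w^{\pm}=\{\mp\dot g\geq0\}\cap\partial\Wman$ are smooth submanifolds with common boundary $\tau$. Thus $\Wman$ is an isolating block with $X\subset\Int\Wman\subset\Uman$, as required.

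The remaining task — constructing $g$ so that~\eqref{equ:isol-block-nondeg} holds — is the step I expect to be the main obstacle. I would start from any smooth $g_{0}\geq0$ near $\Nman$ with $g_{0}^{-1}(0)\supset X$ such that $g_{0}^{-1}\bigl([0,\eps]\bigr)$ is a compact isolating neighbourhood of $X$ contained in $\Int\Nman$ for all small $\eps>0$; in general $g_{0}$ has degenerate or ``internal'' tangencies on the boundaries of its sublevel sets, so the raw sublevel set need not be a block. These are removed by a flow-averaging together with a small generic perturbation: one replaces $g_{0}$ by $g_{1}(x)=\int_{-T}^{T}\chi(s)\,g_{0}(\AFlow(x,s))\,ds$ for a suitable nonnegative weight $\chi$ and $T>0$, iterating if necessary, and then perturbs $0$ to a regular value. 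The point that makes this possible is precisely the isolating hypothesis: no entire orbit other than those of $X$ — which sit well inside the eventual block — remains in a thin collar of $\partial\Nman$, so the averaging can be tuned to turn every tangency on the relevant level set into a second-order external one, i.e.\ to force~\eqref{equ:isol-block-nondeg}. Making this averaging argument precise is delicate, and a complete treatment is given in~\cite{ConleyEaston:TRAMS:1971}.
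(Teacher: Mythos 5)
The paper does not actually prove this statement: it is quoted from Conley and Easton, and the only content the paper adds is a one-sentence remark that the result, established there for manifolds with boundary, extends to $\Dm$-submanifolds when $\partial\Mman$ is $\AFlow$-invariant and $X\cap\partial\Mman\neq\varnothing$. Your sketch is a faithful outline of the classical Conley--Easton construction. The verification step is correct: with $\Wman=g^{-1}\bigl((-\infty,0]\bigr)$, $0$ a regular value, and the second-order condition ($\dot g=0\Rightarrow\ddot g>0$ on $\partial\Wman$), your case analysis on the sign of $\dot g$ does give $w^{\pm}=\{\mp\dot g\geq0\}\cap\partial\Wman$, $w^{+}\cap w^{-}=\tau=\{\dot g=0\}\cap\partial\Wman$, and the positivity of $\ddot g$ on $\tau$ makes $\dot g|_{\partial\Wman}$ a submersion along $\tau$, so $\tau$ is a smooth codimension-one submanifold of $\partial\Wman$. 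Deferring the averaging construction of $g$ to the original paper is reasonable for a sketch, and you correctly identify that this is where the isolating hypothesis enters.

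What your proposal does not touch is the one thing the paper explicitly flags as going beyond the citation: when $\partial\Mman\neq\varnothing$ is $\AFlow$-invariant and $X\cap\partial\Mman\neq\varnothing$, the isolating block in the sense of Definition~\ref{def:D-submanif} must be a $\Dm$-submanifold, i.e.\ a manifold with corners along $\partial\Mman\cap\partial\Wman'$, whereas a regular sublevel set of a single function on a boundaryless manifold produces a manifold with boundary only. To handle this case one has to arrange the construction to be compatible with $\partial\Mman$ --- for instance, carry out the averaging in the collaring $\Mman'$ while keeping the level set $g^{-1}(0)$ transversal to $\partial\Mman$, or build the block in $\Int\Mman$ and in the invariant submanifold $\partial\Mman$ separately and glue along a flow-invariant collar --- and then intersect with $\Mman$. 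This is precisely the ``easy extension'' the paper asserts, and it is the single ingredient your sketch leaves out relative to the statement as formulated here.
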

This result was established for the case when $\Wman$ is a manifold with boundary, but if $\partial\Mman$ is $\AFlow$-invariant and $X\cap\partial\Mman\not=\varnothing$, then the proof easily extends to $\Dm$-submanifolds.

\begin{corollary}\label{cor:prQ_for_isolated_pt}
Suppose $z$ is an isolated singular point of $\AFld$.
Then there exist a base $\beta=\{\Wman_{\alpha}\}_{\alpha\in A}$ of open neighbourhoods of $0\in\RRR^{k}$ such that for every $\Wman\in\beta$ and any subset $\Vman\subset\Wman$ the pair $(\Wman,\Vman)$ has \prQ.
\end{corollary}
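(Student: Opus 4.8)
The plan is to let $\beta$ consist of the interiors of arbitrarily small isolating blocks at $z$ — together, when no such blocks exist, with small sublevel sets of suitably adapted functions. By Lemma~\ref{lm:isol_block} the interior of an isolating block has \prQ\ with \emph{every} subset of it, so once the blocks are produced there is nothing more to do; the whole corollary thus reduces to the dynamical assertion that the isolated rest point $z$ has arbitrarily small neighbourhoods of one of these two kinds.

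First I would treat the case in which $z$ admits arbitrarily small isolating neighbourhoods. Work in a chart with $z=0\in\RRR^{k}$ and fix $r_{0}>0$ with $\overline{B_{r_{0}}}\cap\FixA=\{0\}$, so $\AFld$ is nonvanishing on $\overline{B_{r_{0}}}\setminus\{0\}$. If $\Uman\subset B_{r}$ is an isolating neighbourhood with $z\in\Uman$ (for some $0<r<r_{0}$), then since $\orb_{z}=\{z\}\subset\Uman$ the maximal invariant subset $X$ of $\Uman$ is a closed, compact, $\AFlow$-invariant set containing $z$ for which $\Uman$ is an isolating neighbourhood, hence $X$ is isolated. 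Theorem~\ref{th:ConleyEaston:TRAMS:1971}, in its version for $\Dm$-submanifolds noted just after it (used when $B_{r}$ meets $\partial\Mman$), then gives an isolating block $\Wman$ with $z\in X\subset\Int\Wman\subset\Uman\subset B_{r}$; letting $r\to0$ yields the desired small blocks and Lemma~\ref{lm:isol_block} finishes. For ``hyperbolic-type'' singular points — those with no central recurrence near $z$ — one can even take $\Uman=B_{r}$ after a small change of the Riemannian metric, so that every sufficiently small sphere meets $\AFld$ only transversally or in external quadratic tangencies, making the balls isolating blocks; this is the classical Conley--Easton construction.

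The genuinely harder situation is when $z$ is ``centre-type'', i.e.\ recurrent orbits of $\AFld$ accumulate at $z$ inside every neighbourhood (e.g.\ the linear centre $\AFld=(-y,x)$, the degenerate centre $\AFld=(y,-x^{3})$, or a reduced Hamiltonian field of an extreme): then $z$ has no small isolating neighbourhood and no small isolating block, so blocks are useless. Here I would instead apply Lemma~\ref{lm:prop_prQ}(3) to sublevel sets $\Wman=\{V<c\}$ of a smooth $V\ge0$ near $z$ with $V(z)=0$, $V>0$ off $z$, and $V$ proper near $z$, chosen so that the central recurrence of $\AFld$ is confined to level sets of $V$ while $\AFld$ is transversal to $\{V=c\}$ elsewhere — in other words $V$ is a local first integral along the central invariant part of the dynamics and a Lyapunov/``anti-gradient''-type function in the complementary directions. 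For a.e.\ small $c>0$ then $\{V=c\}$ is a smooth hypersurface, no orbit issuing from $\{V<c\}$ is internally tangent to it (vacuously when $\{V<c\}$ is flow-invariant), so $(\{V<c\},\Vman)$ has \prQ\ for every $\Vman\subset\{V<c\}$, and shrinking $c$ gives $\beta$. The principal obstacle is exactly the construction of $V$ (equivalently of the ``generalized block'') in full generality: one must reconcile a first integral along a centre invariant submanifold with the isolating-block transversality in the directions transverse to it, uniformly on arbitrarily small neighbourhoods. For the singularity types actually occurring in this paper — linear flows of the listed types, reduced Hamiltonian flows of homogeneous polynomials, and their products — this can be carried out explicitly (using $\|x\|^{2}$, the Hamiltonian $g$, or sums of such on the respective factors), which is the setting in which the corollary is applied.
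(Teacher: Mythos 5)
Your reconstruction of the intended route---extract an isolating block via Theorem~\ref{th:ConleyEaston:TRAMS:1971} and finish with Lemma~\ref{lm:isol_block}---is exactly what the paper leaves unsaid (this corollary is stated with no proof), and your observation that this route is incomplete is correct and important. An isolated singular point need not be an isolated \emph{invariant} set: for the linear centre $-y\,\partial_x + x\,\partial_y$ the maximal invariant subset of every ball $B_r$ is all of $B_r$, so $\{0\}$ has no isolating neighbourhood and the Conley--Easton theorem simply does not apply. The paper itself is forced to switch strategy in precisely this situation inside the proof of Corollary~\ref{cor:FF_imShAWV_open}, invoking Lemma~\ref{lm:semiinvar_prQ} (semi-invariant neighbourhoods) for $\typeHamiltExtr$ rather than isolating blocks; the same distinction is hidden in the present corollary. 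Since regular extensions of centre-type Hamiltonians do occur among the types $\typeHamilt'$ in case~$(**)$ of Definition~\ref{defn:classFF}, which is where the corollary is actually invoked, the concern is not vacuous.

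Your proposed repair for the centre-like situation---sublevel sets of a first-integral-plus-Lyapunov function $V$, with transversality supplied by Lemma~\ref{lm:prop_prQ}(3)---is the right replacement, but you stop short of carrying it out. As you acknowledge, constructing such a $V$ uniformly on arbitrarily small scales is exactly the content that remains missing, even for the specific types to which the corollary is applied (the natural candidates being $\|x\|^{2}$ on the linear blocks, $|g|$ on a Hamiltonian factor, and sums of such on a product). A complete proof should at minimum close those cases explicitly rather than leave the centre-type construction as a plausibility argument. As it stands your proposal diagnoses the gap accurately---and in that respect is more careful than the paper's terse presentation, which presents the statement as following from Theorem~\ref{th:ConleyEaston:TRAMS:1971} and Lemma~\ref{lm:isol_block} alone---but it does not fully repair it.
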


\subsection{Semi-invariant sets.}
We say that $\Wman$ is \myemph{negatively} (\myemph{positively}) invariant with respect to $\AFld$ if for every $x\in\Wman$ and $t\leq0$ ($t\geq0$) such that $(x,t)\in\domA$ we have $\AFlow(x,t)\in\Wman$.
In this case we will also say that $\Wman$ is \myemph{semi-invariant}.
\begin{lemma}\label{lm:semiinvar_prQ}
If $\Wman$ is semi-invariant with respect to $\AFlow$, then for any subset $\Vman\subset\Wman$ the pair $(\Wman,\Vman)$ has \prQ
\end{lemma}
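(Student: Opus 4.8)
The plan is to show that, when $\Wman$ is semi-invariant, no triple $(x,a,T)$ can satisfy the hypotheses of Definition~\ref{defn:prQ} at all, so that property (\prQ) holds vacuously. Concretely, I will prove the following: if $x\in\Wman$, $a\in\RRR$, $T>1$, and both $x=\AFlow(x,0)$ and $\AFlow(x,Ta)$ lie in $\Wman$ (which is exactly what the first bullet of Definition~\ref{defn:prQ} supplies, taking $\tau=0\in[0,1)$ and $\tau=T$), then necessarily $\AFlow(x,a)\in\Wman$. This contradicts the second bullet, which demands $\AFlow(x,a)\in\Fr(\Wman)=\overline{\Wman}\setminus\Wman$, a set disjoint from $\Wman$. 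Hence for any $\Vman\subset\Wman$ and any $x\in\Vman\subset\Wman$ the premises of (\prQ) are unsatisfiable, and the implication defining (\prQ) is trivially true.

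The first step is to check that $\AFlow(x,a)$ is even defined. By the standard facts about local flows recalled in \S\ref{sect:shift_map}, the set $\{t:(x,t)\in\domA\}=(a_x,b_x)$ is an open interval; it contains both $0$ and $Ta$, hence the whole closed segment between them, and since $a$ lies between $0$ and $Ta$ (because $0<1<T$), it contains $a$. The second step splits on the type of semi-invariance. Assume first that $\Wman$ is negatively invariant. If $a\le 0$, then $\AFlow(x,a)\in\Wman$ immediately, from $x\in\Wman$ together with negative invariance. If $a>0$, I would instead write $\AFlow(x,a)=\AFlow\bigl(\AFlow(x,Ta),\,a-Ta\bigr)$ using the flow identity $\AFlow(\AFlow(x,s),t)=\AFlow(x,s+t)$; this is legitimate since all the intermediate times lie in $(a_x,b_x)$. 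Here $a-Ta<0$ and $\AFlow(x,Ta)\in\Wman$, so negative invariance again gives $\AFlow(x,a)\in\Wman$. The positively invariant case is entirely symmetric, with the roles of positive and negative $a$ (equivalently, of future and past) interchanged. In every case $\AFlow(x,a)\in\Wman$, which is the contradiction sought, and the lemma follows.

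I do not anticipate any genuine obstacle: the argument is a short direct computation with the flow identity. The only points needing a line of care are (i) verifying that $\AFlow(x,a)$ is defined before semi-invariance is invoked, which is just convexity of the time-domain of an orbit, and (ii) checking that the auxiliary flow expression $\AFlow(\AFlow(x,Ta),a-Ta)$ stays inside $\domA$, which follows from the same convexity once $0$ and $Ta$ are known to lie in $(a_x,b_x)$.
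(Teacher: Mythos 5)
Your proof is correct and follows essentially the same vacuous-truth strategy as the paper's: the paper states in one line that $x,\AFlow(x,Ta)\in\Wman$ together with semi-invariance force $\AFlow(x,\tau a)\in\Wman$ for all $\tau\in[0,T]$, so the second bullet of Definition~\ref{defn:prQ} can never be satisfied; your case split on the sign of $a$ and the rewriting $\AFlow(x,a)=\AFlow(\AFlow(x,Ta),a-Ta)$ is just a careful unpacking of that one-line claim, with the additional (harmless, but good hygiene) verification that the relevant times lie in the orbit's open time-interval.
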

\begin{proof}
Notice that if $x,\AFlow(x,T a)\in\Wman$ for some $a\in\RRR$ and $T>1$, then it follows from semi-invariance of $\Wman$ that $\AFlow(x,\tau a)\in\Wman$ for all $\tau \in[0,T]$.
Hence there are no $x\in\Vman$, $a\in\RRR$ and $T>1$ satisfying assumptions of Definition~\ref{defn:prQ}.
Therefore $(\Wman,\Vman)$ has \prQ
\end{proof}

\subsection{Product of flows.}
For $i=1,2$ let $\Mman_i$ be a manifold, $\AFld_i$ be a vector field on $\Mman_i$, $\Wman_i\subset\Mman_i$ be an open subset, and $\Vman_i \subset\Wman_i$ be a subset.
Denote $\Mman=\Mman_1\times\Mman_2$ and $\Wman=\Wman_1\times\Wman_2$.
Consider the product of these vector fields $\AFld(x,y)=(\AFld_1(x), \AFld_2(y))$ on $\Mman$.
It generates a local flow $\AFlow(x,y,t)=(\AFlow_1(x,t),\AFlow_2(y,t))$.
\begin{lemma}\label{lm:prQ_linflows}
Suppose that $(\Wman_i,\Vman_i)$ has \prQ\ with respect to $\AFld_i$, $(i=1,2)$, and let $\Vman\subset\Vman_1\times\Vman_2$ be a subset.
Then $(\Wman,\Vman)$ has \prQ\ with respect to $\AFld$.
\end{lemma}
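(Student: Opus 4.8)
The plan is to reduce the product case to the two factor cases by a careful bookkeeping of the times at which the orbit of a point $(x,y)\in\Vman$ enters and leaves $\Wman=\Wman_1\times\Wman_2$. Suppose we are given $(x,y)\in\Vman$, $a\in\RRR$, and $T>1$ with $\AFlow((x,y),\tau a)\in\Wman$ for $\tau\in[0,1)\cup\{T\}$ but $\AFlow((x,y),a)\in\Fr(\Wman)$. Since $\AFlow(p,t)=(\AFlow_1(x,t),\AFlow_2(y,t))$, membership in $\Wman$ is the conjunction of membership in $\Wman_1$ (for the first coordinate) and in $\Wman_2$ (for the second). The point $\AFlow((x,y),a)$ lies on $\Fr(\Wman)=\overline{\Wman}\setminus\Wman$, so it lies in $\overline{\Wman_1}\times\overline{\Wman_2}$ but leaves $\Wman$; hence at time $a$ at least one coordinate, say the $i$-th, satisfies $\AFlow_i(\cdot,a)\in\Fr(\Wman_i)$, while the other coordinate is in $\overline{\Wman_j}$. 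First I would fix $x\in\Vman_1$, $y\in\Vman_2$ as the images of $(x,y)$ under the two projections; by Lemma~\ref{lm:prop_prQ}(1) these projections land in $\Vman_i$ so the hypotheses on $(\Wman_i,\Vman_i)$ apply to them.

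**Key steps.** The main point is that at time $a$ the first coordinate orbit already satisfies the pattern "inside $\Wman_1$ on $[0,1)$, inside $\Wman_1$ at $T$, on $\Fr(\Wman_1)$ at $a$" — or else the same for the second coordinate. Without loss of generality assume it is the first coordinate: $\AFlow_1(x,\tau a)\in\Wman_1$ for $\tau\in[0,1)\cup\{T\}$ and $\AFlow_1(x,a)\in\Fr(\Wman_1)$. Applying \prQ\ for the pair $(\Wman_1,\Vman_1)$ to $x$, $a$, $T$ yields $\tau'\in(1,T)$ with $\AFlow_1(x,\tau' a)\notin\overline{\Wman_1}$. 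Then $\AFlow((x,y),\tau' a)=(\AFlow_1(x,\tau' a),\AFlow_2(y,\tau' a))\notin\overline{\Wman_1}\times\overline{\Wman_2}=\overline{\Wman}$, which is exactly what \prQ\ for $(\Wman,\Vman)$ requires. The only thing to be checked is that the first-coordinate hypothesis of \prQ\ genuinely holds: $\AFlow_1(x,\tau a)\in\Wman_1$ for $\tau\in[0,1)$ because $\AFlow((x,y),\tau a)\in\Wman$; $\AFlow_1(x,T a)\in\Wman_1$ because $\AFlow((x,y),Ta)\in\Wman$; and $\AFlow_1(x,a)\in\Fr(\Wman_1)$ because, at time $a$, the product point is in $\overline{\Wman}$ but not in $\Wman$, so at least one coordinate must have slipped to the boundary of its factor while staying in the closure, and we chose indices so that this is the first coordinate. (If it were the second, swap the roles of $1$ and $2$ throughout.)

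**Main obstacle.** The only subtlety — and it is minor — is the logical disjunction in "at least one coordinate is on $\Fr(\Wman_i)$ at time $a$." One has to argue cleanly: $\AFlow((x,y),a)\in\overline{\Wman}=\overline{\Wman_1}\times\overline{\Wman_2}$, so both coordinates are in the respective closures; and $\AFlow((x,y),a)\notin\Wman=\Wman_1\times\Wman_2$, so at least one coordinate is not in the corresponding open set. Combining, that coordinate is in $\overline{\Wman_i}\setminus\Wman_i=\Fr(\Wman_i)$, and I would just relabel so it is $i=1$. After that, everything is a one-line application of the hypothesis on the corresponding factor. I expect no genuine difficulty here; the lemma is essentially formal once the product structure of $\AFlow$ and of $\Fr(\Wman)$ are unwound.
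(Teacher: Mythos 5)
Your proof is correct and follows essentially the same route as the paper's: project to the factor whose coordinate lands in $\Fr(\Wman_i)$ at time $a$, apply \prQ\ for that factor, and push forward the escape time $\tau'$ to the product. You are, if anything, slightly more careful than the published proof: the paper's displayed identity $\Fr(\Wman)=(\Fr(\Wman_1)\times\Wman_2)\cup(\Wman_1\times\Fr(\Wman_2))$ omits $\Fr(\Wman_1)\times\Fr(\Wman_2)$, whereas your argument from $\overline{\Wman_1}\times\overline{\Wman_2}\setminus(\Wman_1\times\Wman_2)$ yields the needed disjunction directly and correctly.
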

\begin{proof}
Let $x=(x_1,x_2)\in\Vman$, $a\in\RRR$, and $T>1$ be such that 
$$\AFlow(x,\tau a)=(\AFlow_1(x_1,\tau a),\AFlow_2(x_2,\tau a)) \in \Wman, \qquad \tau\in[0,1) \cup\{T\},$$
$$
\AFlow(x,a)=(\AFlow_1(x_1,a),\AFlow_2(x_2,a)) \in \Fr{\Wman}=(\Fr(\Wman_1)\times\Wman_2) \cup (\Wman_1\times\Fr(\Wman_2)).
$$
In particular, $\AFlow_i(x_i,a)\in\Fr(\Wman_i)$ for at least one index $i=1,2$.
For definiteness assume that $\AFlow_1(x_1,a)\in\Fr(\Wman_1)$.
Since $x_1\in\Vman_1$, it follows from \prQ\ for $(\Wman_1,\Vman_1)$ that there exists $\tau'\in(1,T)$ such that 
$\AFlow_1(x_1,\tau' a) \not\in \overline{\Wman_1}$.
Then $\AFlow(x,\tau' a) \not\in\overline{\Wman}$ as well.
Hence $(\Wman,\Vman)$ has \prQ\ with respect to $\AFld$.
\end{proof}

\begin{corollary}\label{cor:FF_imShAWV_open}
Let $\AFld$ be a product of vector fields of types $\typeLinear$ or $\typeHamilt$ on $\RRR^{m}$.
Then there exist a base $\beta=\{\Wman_{\alpha}\}_{\alpha\in A}$ of open neighbourhoods of $0\in\RRR^{k}$ such that for every $\Wman\in\beta$ and any subset $\Vman\subset\Wman$ the pair $(\Wman,\Vman)$ has \prQ\
In particular, if $\Vman$ is a connected $\Dm$-neighbourhood of $0$, then $\funcAWV =\fVW$ and thus by~\eqref{equ:ShAVU_imShAV_CVW1} $$\imShAWV=\imShAV\cap \Ci{\Vman}{\Wman}$$ is $\Wr{0}$-open in $\imShAV$.
\end{corollary}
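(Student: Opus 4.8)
The plan is to peel $\AFld$ into its factors, check property~\prQ\ for one factor at a time, and reassemble with Lemma~\ref{lm:prQ_linflows}. Write $\AFld=\AFld_1\times\cdots\times\AFld_\ell$ on $\RRR^m=\RRR^{m_1}\times\cdots\times\RRR^{m_\ell}$, each $\AFld_i$ of type $\typeLinear$ or $\typeHamilt$. Since property~\prQ\ of a pair $(\Wman,\Vman)$ is a condition on the unparametrised orbits of $\AFld$ near $\overline\Wman$ only (immediate from Definition~\ref{defn:prQ}; compare Lemma~\ref{lm:prop_prQ}(2)), it is unchanged when $\AFld$ is multiplied by a strictly positive $\Cont\infty$ function; so we may assume each type-$\typeLinear$ factor is a linear field $A_ix$ and each type-$\typeHamilt$ factor is a reduced Hamiltonian field. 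By Lemma~\ref{lm:prQ_linflows} (applied inductively, with each $\Vman_i=\Wman_i$) it then suffices, for every $i$, to exhibit a base $\beta_i$ of open neighbourhoods of $0\in\RRR^{m_i}$ such that $(\Wman_i,\Wman_i)$ has~\prQ\ for each $\Wman_i\in\beta_i$: then $\beta=\{\Wman_1\times\cdots\times\Wman_\ell:\Wman_i\in\beta_i\}$ is a base at $0\in\RRR^m$ and $(\Wman,\Vman)$ has~\prQ\ for every $\Wman\in\beta$ and $\Vman\subset\Wman$. Granting this, the ``in particular'' clause is immediate: Lemma~\ref{lm:prQ_singpt} gives $\funcAWV=\fVW$ for any connected $\Dm$-neighbourhood $\Vman$ of $0$, whence $\imShAWV=\ShAV(\fVW)=\imShAV\cap\Ci{\Vman}{\Wman}$ is $\Wr0$-open in $\imShAV$ by~\eqref{equ:ShAVU_imShAV_CVW1}.

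So it remains to treat one factor. If the factor is of type $\typeHamilt$, its two coordinate functions are relatively prime homogeneous polynomials of positive degree, so they vanish simultaneously only at the origin, and $0$ is an \emph{isolated} singular point. If the factor is a linear field $A_ix$ with $A_i$ invertible, its fixed-point set is $\ker A_i=\{0\}$, so again $0$ is isolated. In both cases Corollary~\ref{cor:prQ_for_isolated_pt} provides the desired base $\beta_i$. For an arbitrary linear factor $A_ix$ I would pass to the real Jordan form of $A_i$ and split $\RRR^{m_i}=E_{\mathrm{inv}}\oplus E_0$ into the sum of the generalized eigenspaces with nonzero eigenvalue and the generalized $0$-eigenspace; then $A_ix$ is the product of the invertible linear field $A_i|_{E_{\mathrm{inv}}}$ (already handled) with the nilpotent field $N=A_i|_{E_0}$, and $N$ is itself a product of nilpotent Jordan blocks — the $1\times1$ blocks being zero fields, for which~\prQ\ holds vacuously. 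Hence the only remaining case is a single nilpotent Jordan block $N$ on $\RRR^{n}$ with $n\geq2$.

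For such an $N$ the fixed-point set is the line $\ell=\ker N$, and $\ell$ is an \emph{isolated invariant set}: along any non-fixed orbit the first coordinate is a non-constant polynomial in the time, so every non-fixed orbit leaves every bounded subset of $\RRR^n$. By the Conley--Easton theorem (Theorem~\ref{th:ConleyEaston:TRAMS:1971}) there is an isolating block $\Wman_0$ with $\ell\subset\Int\Wman_0$, and by Lemma~\ref{lm:isol_block} the pair $(\Int\Wman_0,\Vman)$ has~\prQ\ for every $\Vman\subset\Int\Wman_0$. The trouble is that $\Int\Wman_0$ is a neighbourhood of the whole line $\ell$, not an arbitrarily small neighbourhood of $0$. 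The idea is to cut it down with a thin transverse slab $S_\eps=\{|x_1|<\eps\}$ and set $\Wman=\Int\Wman_0\cap S_\eps$, an arbitrarily small neighbourhood of $0$; these are the members of the base for $N$. One then has to check that $(\Wman,\Wman)$ still has~\prQ: an orbit that leaves $\Wman$ does so either across $\Fr\Wman_0$, where it is controlled by the transversality built into the isolating block (Lemma~\ref{lm:prop_prQ}(3)), or across $\{|x_1|=\eps\}$, where it is controlled by the fact that along an orbit $x_1(t)$ is a polynomial of degree $<n$. I expect this last step — ruling out an orbit that leaves $\Wman$, returns to $\Wman$, and yet never leaves $\overline\Wman$ in between (an \emph{internal tangency} of an orbit with $\Fr\Wman$) — to be the main obstacle, precisely because the obvious candidates fail: already for the size-$3$ nilpotent block on $\RRR^3$ a round ball $B_\delta$ is \emph{not} admissible, since some orbit touches $\partial B_\delta$ from inside while remaining inside $\overline{B_\delta}$. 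Choosing $\eps$ and $\Wman_0$ small and generic, and exploiting the isolating-block structure on the $\Fr\Wman_0$-part of $\Fr\Wman$, should remove these tangencies and complete the proof.
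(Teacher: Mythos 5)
The gap you flag at the end is real, and it is exactly where the paper does the actual work; your plan to fix it — to produce, for a nilpotent Jordan block on $\RRR^n$, a base $\beta$ with $(\Wman,\Wman)$ having \prQ\ — is asking for more than the paper proves and more than the application needs. For a nilpotent block the paper never establishes \prQ\ for the pair $(\Wman,\Wman)$; instead it establishes it for pairs $(\Wman_R,\Vman_r)$ with $\Vman_r$ a \emph{much smaller} closed disk. Concretely: the coordinates of $\AFlow(x,t)$ are polynomials in $t$ of degree $<n$, so for a fixed bound $\bar r$ on $\|x\|$ there is a $T>0$ after which $\|\AFlow(x,t)\|$ is strictly monotone in $|t|$ along every non-fixed orbit; one then shrinks $r$ so that $\AFlow(\Vman_r\times[-T,T])\subset\Wman_R$, whence any time $a$ at which an orbit from $\Vman_r$ hits $\partial\Wman_R$ must satisfy $|a|>T$, and monotonicity takes over. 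The internal tangency you worry about (and correctly exhibit on $\RRR^3$) can indeed occur for $\Vman=\Wman$, so your intermediate goal is false as stated; the fix is to keep $\Vman$ small relative to $\Wman$, which by Lemma~\ref{lm:prop_prQ}(1) is all that Lemma~\ref{lm:prQ_singpt} and the ``in particular'' clause require. Note also that Lemma~\ref{lm:prQ_linflows} lets you assemble the factors with different $\Vman_i\subsetneq\Wman_i$, so you never need $(\Wman_i,\Wman_i)$ at the factor level either.

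A second, unacknowledged problem: for a factor of type $\typeHamiltExtr$, or a linear factor similar to a pure rotation block, the origin is an isolated \emph{singular point} but not an isolated \emph{invariant set} — arbitrarily small neighbourhoods contain whole periodic orbits — so the Conley–Easton theorem behind Corollary~\ref{cor:prQ_for_isolated_pt} does not apply. The paper treats those two cases by a different mechanism: the origin has arbitrarily small $\AFlow$-invariant (resp. positively/negatively invariant) neighbourhoods, and Lemma~\ref{lm:semiinvar_prQ} then gives \prQ\ directly. Corollary~\ref{cor:prQ_for_isolated_pt} is reserved for $\typeHamiltNonExtr$ and for the type-$(**)$ singularities in Definition~\ref{defn:classFF}, where $\{0\}$ genuinely is an isolated invariant set. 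You should split your $\typeHamilt$ and invertible-linear cases accordingly.
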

\begin{proof}
Due to Lemma~\ref{lm:prQ_linflows} it suffices to assume that $\AFld$ is of type $\typeLinear$ or $\typeHamilt$.
If $\AFld$ is of type $\typeHamiltExtr$, then $0$ has an arbitrary small invariant neighbourhoods $\Wman$ and by Lemma~\ref{lm:semiinvar_prQ} $(\Wman,\Vman)$ has \prQ\ for any subset $\Vman\subset\Wman$.
If $\AFld$ is of type $\typeHamiltNonExtr$ then $\{0\}\subset\RRR^2$ is an isolated invariant subset of any of its small neighbourhood, whence existence of $\Wman$ follows from Theorem~\ref{th:ConleyEaston:TRAMS:1971}, though it can easily be constructed without referring to this theorem.

It remains to consider the case when $\AFld(x)=Bx$, where $B$ is a Jordan cell corresponding either to some eigen value $a\in\RRR$ or to the pair of complex conjugate $a\pm ib$, $(a,b\in\RRR)$.

1) If $a>0$ $(a<0)$, then, e.g.\cite{PalisdeMelo:1982}, $0$ has arbitrary small positively (negatively) $\AFlow$-invariant neighbourhoods $\Wman$.
Moreover, if $B = \left\|\begin{smallmatrix} 0  & -b \\ b  & 0 \end{smallmatrix}\right\|$, then $0\in\RRR^2$ has arbitrary small $\AFlow$-invariant neighbourhoods $\Wman$.
Then again by Lemma~\ref{lm:semiinvar_prQ} such neighbourhoods have desired properties.

2) Suppose that 
$B= \left\|\begin{smallmatrix} 
0 & 1  \\
    & \cdots &  \\
&   0 & 1 \\
&     & 0 \\
 \end{smallmatrix}\right\|
$ is nilpotent.
Then the coordinate functions of $\AFlow$ are given by the following formulas:
$$
\AFlow_1(x,t) =x_1 + x_2 t + x_3 \frac{t^2}{2!} + \cdots + x_m \frac{t^{m-1}}{(m-1)!},
\qquad 
\AFlow_i = \frac{\partial \AFlow_{i-1}}{\partial t},
$$
for $i=2,\ldots,m$.
In particular, for each $x\in\RRR^{m}$ these functions are polynomials in $t$ of degree $\leq m-1$.
Hence for every $\bar r>0$ there exists $T>0$ such that if $|x_0|<\bar r$ and $x_0$ is not a fixed point of $\AFlow$, then $\|\AFlow(x,t)\|$ strictly monotone increases when increases $t>T$ (decreases $t<-T$).

For every $r\geq0$ let $\Vman_r \subset\RRR^{m}$ be the closed $m$-disk of radius $r$ centered at the origin and $\Wman_r=\Int(\Vman_r)$ be its interior.
Since $\AFlow(0,t)=0$ for all $t\in\RRR$, it follows that for every $R>0$ there exists $r<\bar r$ such that $\AFlow(\Vman_{r}\times[-T,T]) \subset \Wman_R$, i.e. $\|\AFlow(x,t)\|<R$ for $\|x\|\leq r$ and $|t|\leq T$.
We claim that the pair $(\Wman_R,\Vman_r)$ has \prQ

Suppose that for some $x\in \Vman_r$ and $a\in\RRR$ we have $\AFlow(x,\tau a)\in \Wman_R$ for $\tau\in[0,1)$ and  $\AFlow(x,a)\in \Fr(\Wman_R)=\partial \Vman_R$, i.e. $\|\AFlow(x,\tau a)\|<R$ and $\|\AFlow(x,a)\|=R$.
Then $|a|>T$, whence $\|\AFlow(x,t)\|$ strictly monotone increases when increases $|t|$.
In particular, $\|\AFlow(x,\tau a)\|>R$ for all $\tau>1$, i.e. $\AFlow(x,\tau a)\not\in\Vman_R=\overline{\Wman_R}$.

3) Suppose that $\lambda=ib$, $(b\not=0)$, is purely imagine but $m=2k\geq4$ for some $k\geq 2$.
Then regarding $\RRR^{m}$ as $\CCC^{k}$ we have that in complex coordinates  
$A= \left\|\begin{smallmatrix} 
ib & 1  \\
    & \cdots &  \\
&   ib & 1 \\
&     & ib \\
 \end{smallmatrix}\right\|
$.
Hence the coordinate functions of $\AFlow$ are given by formulae similar to the case 2).
Denote $z=(z_1,\ldots,z_k)$ and $p(z,t)=z_1 + z_2 t + z_3 \frac{t^2}{2!} + \cdots + z_k \frac{t^{k-1}}{(k-1)!}$.
Then
$$
\AFlow_1(z,t) = e^{ib} p(z), \qquad
\AFlow_i = e^{ib} \frac{\partial \AFlow_{i-1}}{\partial t}, \qquad i=2,\ldots,k.
$$
Since $|e^{ib}|=1$, it follows that $\|\AFlow(x,t)\|$ satisfies monotonicity conditions analogous to the case 2).
Then by the similar arguments we obtain that for every $R>0$ there exists $r>0$ such that the pair $(\Wman_R,\Vman_r)$ has property \prQ
\end{proof}

\section{Proof of Theorem~\ref{th:main-result}}\label{sect:proof_th:main-result}
Let $\AFld$ be a vector field of class $\FF(\Mman)$.
It follows from results of~\cite{Maks:ImSh} that $\imShA=\EidAFlow{k}$, where $k=1$ if $\FixF\not=\FixLinE\cup\FixHamNonExtrE$ and $k=0$ otherwise.

Therefore we should prove that the map $\ShA:\funcA\to\imShA$ is a homeomorphism with respect to $\Sr{\infty}$ topologies.
Actually we want to apply Theorem~\ref{th:Sh-open-map}.
\begin{clm}\label{clm:ShAV_open_FF}
Suppose that $\AFld$ belongs to class $\FF(\Mman)$. 
\begin{enumerate}
 \item[\rm(1)]
If $z\in\FixF$, then for any sufficiently small connected compact $\Dm$-neigh\-bourhood $\Vman$ of $z$ the shift map $\ShAV:\funcAV\to\imShAV$ is $\contWW{\infty}{\infty}$-open.
Moreover, if $z\in\FixLinPHE\cup\FixLinNilpE$, then $\ShAV$ is even $\contWW{r}{r+2}$-open for all $r\geq0$.
 \item[\rm(2)] 
If $z$ is a regular point of $\AFld$, then for any sufficiently small connected compact $\Dm$-neighbourhood $\Vman$ of $z$ the shift mapping $\ShAV:\funcAV\to\imShAV$ is $\contWW{r}{r}$-open for all $r\geq1$.
\end{enumerate}
\end{clm}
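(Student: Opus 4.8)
The plan is to prove part~(2) directly from Theorem~\ref{th:ShAVopen_reg} and part~(1) by a case analysis following Definition~\ref{defn:classFF}(d), in each case combining a ``local analytic'' input (openness of $\ShAWV$, obtained from the model vector field via Lemma~\ref{lm:lin_flows}, Lemma~\ref{lm:Ham_vf}, Lemma~\ref{lm:reparam_shift_map} and Theorem~\ref{th:reg_ext}) with a ``global topological'' input (openness of $\imShAWV$ in $\imShAV$, obtained from property \prQ) and feeding both into Theorem~\ref{th:charact_sh_open}. For part~(2), let $z$ be a regular point of $\AFld\in\FF(\Mman)$. Conditions (b) and (c) of Definition~\ref{defn:classFF} say exactly that $z$ is non-periodic and non-recurrent, or periodic with periodic first recurrence germ, or periodic with $T_zR$ having an eigen value of modulus $\neq1$; these are the three alternatives of Theorem~\ref{th:ShAVopen_reg}, which then yields a small connected compact $\Dm$-neighbourhood $\Vman$ of $z$ with $\ShAV$ $\contWW{r}{r}$-open for all $r\geq1$. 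For part~(1), fix $z\in\FixF$. By Theorem~\ref{th:charact_sh_open} it suffices to produce a small open neighbourhood $\Wman$ of $z$ and a connected compact $\Dm$-neighbourhood $\Vman\subset\Wman$ of $z$ such that (i) $\ShAWV$ is $\contWW{r}{s}$-open and (ii) $\imShAWV$ is $\Sr{s}$-open in $\imShAV$, where $(r,s)=(\infty,\infty)$ in general and $(r,r+2)$ for all $r\geq0$ when $z\in\FixLinPHE\cup\FixLinNilpE$. Shrinking $\Wman$ we may assume $\AFld|_{\Wman}$ is flow-equivalent to the restriction of the relevant model field on $\RRR^m$, so that computing with the model and transferring back via Theorem~\ref{th:charact_sh_open} is legitimate; Definition~\ref{defn:classFF}(d) gives two cases.

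\emph{Case $(**)$: $z$ is isolated in $\FixA$ and the germ of $\AFld$ at $z$ is a regular extension of a field of type $\typeLinPH$, $\typeLinNilp$, $\typeLinRotExt$, or $\typeHamilt$.} For (i): apply Lemma~\ref{lm:reparam_shift_map} with $\mu$ the positive factor of the base field (extended to a positive function on $\Wman$) to strip that factor; in the linear cases choose inside the base an elementary Jordan block of the shape prescribed in Lemma~\ref{lm:lin_flows}, so that the whole germ is a regular extension of that block (resp.\ of the reduced Hamiltonian field). Then Lemma~\ref{lm:lin_flows} (cases 1--4) or Lemma~\ref{lm:Ham_vf} shows the model's shift map is $\contWW{r}{r+2}$-open in the $\typeLinPH$/$\typeLinNilp$ cases and $\contWW{\infty}{\infty}$-open in the $\typeLinRotExt$/$\typeHamilt$ cases; passing to the restriction via Theorem~\ref{th:charact_sh_open}, (A)$\Rightarrow$(B), gives the same for $\ShAWV$. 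For (ii): since $z$ is isolated, Corollary~\ref{cor:prQ_for_isolated_pt} furnishes a base of neighbourhoods $\Wman$ for which $(\Wman,\Vman)$ has \prQ\ for every $\Vman\subset\Wman$, hence by Lemma~\ref{lm:prQ_singpt} $\funcAWV=\fVW$ and, by~\eqref{equ:imShAWV_open_in_imShAV}, $\imShAWV=\imShAV\cap\Ci{\Vman}{\Wman}$ is $\Wr{0}$-open in $\imShAV$. Now Theorem~\ref{th:charact_sh_open}, (B)$\Rightarrow$(A), finishes this case.

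\emph{Case $(*)$: the germ of $\AFld$ at $z$ is a product $\BFld_1\times\cdots\times\BFld_k$ with each $\BFld_j$ of type $\typeLinear$ or $\typeHamilt$.} First, condition (b) of Definition~\ref{defn:classFF} forbids any $\typeLinNonRot$ factor (such a factor yields a non-periodic recurrent regular point of the product). For (ii): Corollary~\ref{cor:FF_imShAWV_open} directly gives a base of $\Wman$'s with \prQ\ for all $\Vman\subset\Wman$, hence $\imShAWV$ is $\Wr{0}$-open in $\imShAV$ as before. For (i), split according to whether the product has a non-periodic point. If some $\BFld_j$ has a non-periodic point --- equivalently, by Lemma~\ref{lm:Shift-map-prop}(5), the product's shift map is injective --- then $\BFld_j$ has type $\typeLinPH$, $\typeLinNilp$, $\typeLinRotExt$, or $\typeHamiltNonExtr$; viewing the product as a regular extension of $\BFld_j$ (after reparametrising $\BFld_j$ and, in the linear subcases, of an elementary Jordan block inside it), Theorem~\ref{th:reg_ext}(1) together with the injectivity and openness of the corresponding trivial extension's shift map from Lemma~\ref{lm:lin_flows} (cases 1--4) or Lemma~\ref{lm:Ham_vf} makes $\ShAWV$ $\contWW{r}{s}$-open, with $s=r+2$ when the chosen base is of type $\typeLinPH$ or $\typeLinNilp$ --- which is exactly when $z\in\FixLinPHE\cup\FixLinNilpE$ --- and $s=\infty$ otherwise. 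If instead every factor has only closed orbits (each is $\typeLinRot$ or $\typeHamiltExtr$), then the product's shift map is non-injective; choosing any factor as base, whose trivial extension has a non-injective $\contWW{\infty}{\infty}$-open shift map by Lemma~\ref{lm:lin_flows} (cases 5 and 6a --- here (b) forces the periodic case 6a) or Lemma~\ref{lm:Ham_vf}, Theorem~\ref{th:reg_ext}(4) yields $\ShAWV$ $\contWW{\infty}{\infty}$-open. In both subcases Theorem~\ref{th:charact_sh_open} then delivers~(1), completing the proof.

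I expect the main obstacle to be the bookkeeping in case $(*)$: one must peel off a factor as the base of a \emph{regular} extension so that the base lemmas apply --- delicate because Lemma~\ref{lm:lin_flows} only covers \emph{trivial} extensions of $\typeLinRot$ fields, so the $\typeLinRot$ part can serve as base only when no other factor is present --- and one must match this choice with the correct clause of Theorem~\ref{th:reg_ext} (clause (1) when injectivity is available, clause (4) when it is not), all while tracking the continuity loss (the $+2$ of case~2 of Lemma~\ref{lm:lin_flows}) so as to obtain $\contWW{r}{r+2}$-openness precisely on $\FixLinPHE\cup\FixLinNilpE$. The reparametrisation via Lemma~\ref{lm:reparam_shift_map} must likewise be performed compatibly with the regular-extension structure (using $\mu$ equal to the base factor, not applied blindly to the whole product). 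The remaining verifications --- that the model restricted to $\Wman$ is flow-equivalent to $\AFld|_{\Wman}$ for small $\Wman$, and the elementary identifications of which Jordan blocks realise the germ as a regular extension --- are routine.
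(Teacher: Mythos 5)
Your proof follows the same high-level plan as the paper's own argument: part~(2) directly from Theorem~\ref{th:ShAVopen_reg}; part~(1) by reducing via Theorem~\ref{th:charact_sh_open} to (i)~local openness of $\ShAWV$ and (ii)~openness of $\imShAWV$ in $\imShAV$, supplying (ii) through property~\prQ\ via Corollaries~\ref{cor:prQ_for_isolated_pt} and~\ref{cor:FF_imShAWV_open}, and (i) through the analytic input of Lemmas~\ref{lm:reparam_shift_map}, \ref{lm:Ham_vf}, \ref{lm:lin_flows} and Theorem~\ref{th:reg_ext}. You fill in considerably more of the case-by-case bookkeeping than the paper, which dismisses step~(i) with a one-line pointer.

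There is, however, a genuine gap in case $(*)$ when every factor has only closed orbits. You assert that then the product's shift map is non-injective and apply Theorem~\ref{th:reg_ext}(4). But if \emph{all} factors are of type $\typeHamiltExtr$, the $1$-jet of $\AFld$ at $z$ vanishes, the tangent flow $T_z\AFlow_t$ is the identity, and by Lemma~\ref{lm:Shift-map-prop}(5a) the shift map $\ShAV$ \emph{is} injective; for the same reason the shift map $\ShBeV$ of the trivial extension of any $\typeHamiltExtr$ factor is injective. Hence the hypothesis of Theorem~\ref{th:reg_ext}(4) fails, and this sub-case must be handled instead by the second sentence of Lemma~\ref{lm:Ham_vf} (regular extensions of a $\typeHamilt$ field of degree $\geq 2$ have $\contWW{\infty}{\infty}$-open shift map) or by Theorem~\ref{th:reg_ext}(1). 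Relatedly, when $\typeLinRot$ and $\typeHamiltExtr$ factors are mixed, ``choosing any factor as base'' is not safe: the base must be a $\typeLinRot$ factor, whose trivial extension really does have a non-injective shift map, before Theorem~\ref{th:reg_ext}(4) can be used. These are local repairs --- the overall structure is correct --- but as written the argument would fail for, e.g., a product of two $\typeHamiltExtr$ fields.
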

\begin{proof}
By (C) Theorem~\ref{th:charact_sh_open} it suffices to find a neighbourhood $\Wman$ of $z$ such that for every connected $\Dm$-neighbourhood $\Vman\subset\Wman$ of $z$
\begin{enumerate}
\item[\rm(i)]
the shift map $\ShAWV:\funcAWV\to\imShAWV$ is $\contWW{r}{s}$-open for the corresponding values $r,s$, and
 \item[\rm(ii)] 
its image $\imShAWV$ is $\Wr{k}$-open in $\imShA$, where $k=0$ in the cases (1), and $k=1$ in the case (2).
\end{enumerate}

(1) Let $z\in\FixA$.
Then by definition of class $\FF(\Mman)$ there exists a neighbourhood $\Wman$ of $z$ on which in some local coordinates either 
\begin{enumerate}
 \item[(a)]
$\AFld$ is a product of finitely many vector fields $\BFld_1,\ldots,\BFld_n$ each of which is of types $\typeZero$, $\typeLinear$, or $\typeHamilt$, or
 \item[(b)]
$\AFld$ belongs to one of the types $\typeLinPH'$, $\typeLinNilp'$, $\typeLinRotExt'$, or $\typeHamilt'$ but $z$ is an isolated singular point of $\AFld$.
\end{enumerate}

Decreasing $\Wman$ we can also assume that $\imShAWV$ is $\Wr{0}$-open in $\imShA$, i.e.\! condition (ii) is satisfied.
In the case (a) this follows from Corollary~\ref{cor:FF_imShAWV_open}, while in the case (b) from Corollary\;\ref{cor:prQ_for_isolated_pt}.

Then (i) directly follows from Lemmas\;\ref{lm:reparam_shift_map}-\ref{lm:lin_flows}.

(2) Suppose that $z$ is a regular point for $\AFld$.
Since $\AFld$ belongs to $\FF(\Mman)$, it follows that the assumptions of Theorem~\ref{th:ShAVopen_reg} are satisfied, whence for any sufficiently small $\Dm$-neighbourhood of $z$ the shift map $\ShAV$ is $\contWW{r}{r}$-open for all $r\geq1$.
\end{proof}

For every $z\in\Mman$ let $\Vman_{z}$ be a neighbourhood guaranteed by Claim~\ref{clm:ShAV_open_FF}.
By assumption of Theorem~\ref{th:main-result} the set $\overline{\FixF\setminus(\FixLinPHE\cup\FixLinNilpE)}$ is compact.

Therefore using paracompactness of $\Mman$ we can find a locally finite cover $\{\Vman_i\}_{i\in\Lambda}$ of $\Mman$ by compact connected $\Dm$-submanifolds and a finite subset $\Lambda'\subset\Lambda$ such that the map $\ShAVi$ is $\contWW{\infty}{\infty}$-open for $i\in\Lambda'$ and $\contWW{r}{r+2}$-open for all $r\geq0$ whenever $i\in\Lambda\setminus\Lambda'$.
Hence by ``fragmentation'' Theorem~\ref{th:Sh-open-map} $\ShA$ is a local homeomorphism with respect to $\Sr{\infty}$ topologies.
All other statements concerning homotopy types of $\DidAFlow{r}$ and $\EidAFlow{r}$ follow by the arguments of the proof of~\cite[\S9]{Maks:TA:2003}.
Theorem~\ref{th:main-result} is completed.

\section{Acknowledgements}
I would like to thank V.\;V.\;Sharko, A.\;Prishlyak, I.\;Vlasenko, and I.\;Yurchuk for useful discussions.
I am especialy thank E.\;Polulyakh for very helpful conversations and pointing out to the irrational flow on $T^2$ which lead me to discovering mistakes of~\cite{Maks:TA:2003}.


\begin{thebibliography}{10}

\bibitem{Blackmore:JDE:1973}
Denis~L. Blackmore: \emph{On the local normalization of a vector field at a
  degenerate critical point}, J. Differential Equations \textbf{14} (1973),
  338--359. \MR{MR0346245 (49 \#10970)}

\bibitem{Bryuno:TMMO:1971}
A.~D. Bryuno: \emph{Analytical form of differential equations}, Trudy Moskov.
  Mat. Ob\v{s}\v{c}. \textbf{25} (1971), 119--262.

\bibitem{Chacon:JMM:1966}
R.~V. Chacon: \emph{Change of velocity in flows}, J. Math. Mech. \textbf{16}
  (1966), 417--431. \MR{MR0207955 (34 \#7768)}

\bibitem{ConleyEaston:TRAMS:1971}
C.~Conley and R.~Easton: \emph{Isolated invariant sets and isolating blocks},
  Trans. Amer. Math. Soc. \textbf{158} (1971), 35--61. \MR{MR0279830 (43
  \#5551)}

\bibitem{Frerick:JRAM:2007}
Leonhard Frerick: \emph{Extension operators for spaces of infinite
  differentiable {W}hitney jets}, J. Reine Angew. Math. \textbf{602} (2007),
  123--154. \MR{MR2300454 (2008h:58015)}

\bibitem{Hart:Top:1983}
David Hart: \emph{On the smoothness of generators}, Topology \textbf{22}
  (1983), no.~3, 357--363. \MR{MR710109 (85b:58105)}

\bibitem{Hirsch:DiffTop}
Morris~W. Hirsch: \emph{Differential topology}, Graduate Texts in Mathematics,
  vol.~33, Springer-Verlag, New York, 1994, Corrected reprint of the 1976
  original. \MR{MR1336822 (96c:57001)}

\bibitem{EHoph:1937}
Eberhard Hopf: \emph{Ergodentheorie}, Chelsea Publishing Co., Berlin, 1937.

\bibitem{Illman:OJM:2003}
S{\"o}ren Illman: \emph{The very-strong {$C^\infty$} topology on
  {$C^\infty(M,N)$} and {$K$}-equivariant maps}, Osaka J. Math. \textbf{40}
  (2003), no.~2, 409--428. \MR{MR1988698 (2004d:58052)}

\bibitem{Kochergin:IANSSSR:1973}
A.~V. Ko{\v{c}}ergin: \emph{Change of time in flows, and mixing}, Izv. Akad.
  Nauk SSSR Ser. Mat. \textbf{37} (1973), 1275--1298. \MR{MR0346129 (49
  \#10855)}

\bibitem{KondratevSamovol:MZ:1973}
V.~A. Kondrat{\cprime}ev and V.~S. Samovol: \emph{The linearization of an
  autonomous system in the neighborhood of a singular point of ``knot'' type},
  Mat. Zametki \textbf{14} (1973), 833--842. \MR{MR0338503 (49 \#3267)}

\bibitem{Kowada:JMSJ:1972}
Masasi Kowada: \emph{The orbit-preserving transformation groups associated with
  a measurable flow}, J. Math. Soc. Japan \textbf{24} (1972), 355--373.
  \MR{MR0306451 (46 \#5577)}

\bibitem{Maks:TA:2003}
Sergiy Maksymenko: \emph{Smooth shifts along trajectories of flows}, Topology
  Appl. \textbf{130} (2003), no.~2, 183--204. \MR{MR1973397 (2005d:37035)}

\bibitem{Maks:hamv2}
Sergiy Maksymenko: \emph{Hamiltonian vector fields of homogeneous polynomials in two
  variables}, Pr. Inst. Mat. Nats. Akad. Nauk Ukr. Mat. Zastos. \textbf{3}
  (2006), no.~3, 269--308, arXiv:math/0709.2511.

\bibitem{Maks:AGAG:2006}
Sergiy Maksymenko: \emph{Homotopy types of stabilizers and orbits of {M}orse functions on
  surfaces}, Ann. Global Anal. Geom. \textbf{29} (2006), no.~3, 241--285.
  \MR{MR2248072 (2007k:57067)}

\bibitem{Maks:BSM:2006}
Sergiy Maksymenko: \emph{Stabilizers and orbits of smooth functions}, Bull. Sci. Math.
  \textbf{130} (2006), no.~4, 279--311. \MR{MR2237445 (2007d:58066)}

\bibitem{Maks:MFAT:2009}
Sergiy Maksymenko: \emph{Connected components of partition preserving diffeomorphisms},
  Methods Funct. Anal. Topology \textbf{15} (2009), no.~3, 264--279.

\bibitem{Maks:ImSh}
Sergiy Maksymenko: \emph{Image of a shift map along the oribts of a flow}, submitted
  (2009), arXiv:math/0902.2418.

\bibitem{Maks:CEJM:2009}
Sergiy Maksymenko: \emph{$\infty$-jets of diffeomorphisms preserving orbits of vector
  fields}, Cent. Eur. J. Math. \textbf{7} (2009), no.~2, 272--298.

\bibitem{Maks:reparam-sh-map}
Sergiy Maksymenko: \emph{Reparametrization of vector fields and their shift maps}, Pr.
  Inst. Mat. Nats. Akad. Nauk Ukr. Mat. Zastos. \textbf{6} (2009), no.~2,
  489--498, arXiv:math/0907.0354.

\bibitem{Maks:sym-nondeg-topcenter}
Sergiy Maksymenko: \emph{Symmetries of center singularities of plane vector fields}, to
  appear in Nonlinear Oscilations (2009), arXiv:math/0907.0359.

\bibitem{MargalefOuterelo:1992}
Juan Margalef~Roig and Enrique Outerelo~Dom{\'{\i}}nguez: \emph{Differential
  topology}, North-Holland Mathematics Studies, vol. 173, North-Holland
  Publishing Co., Amsterdam, 1992, With a preface by Peter W. Michor.
  \MR{MR1173211 (93g:58005)}

\bibitem{Mather_1:AnnMath:1968}
John~N. Mather: \emph{Stability of {$C^{\infty }$} mappings. {I}. {T}he
  division theorem}, Ann. of Math. (2) \textbf{87} (1968), 89--104.
  \MR{MR0232401 (38 \#726)}

\bibitem{Mityagin:UMN:1961}
B.~S. Mitjagin: \emph{Approximate dimension and bases in nuclear spaces},
  Uspehi Mat. Nauk \textbf{16} (1961), no.~4 (100), 63--132. \MR{MR0152865 (27
  \#2837)}

\bibitem{MostowShnider:TrAMS:1985}
Mark~Alan Mostow and Steven Shnider: \emph{Joint continuity of division of
  smooth functions. {I}. {U}niform {L}ojasiewicz estimates}, Trans. Amer. Math.
  Soc. \textbf{292} (1985), no.~2, 573--583. \MR{MR808738 (87f:58018a)}

\bibitem{PalisdeMelo:1982}
Jacob Palis, Jr. and Welington de~Melo: \emph{Geometric theory of dynamical
  systems}, Springer-Verlag, New York, 1982, An introduction, Translated from
  the Portuguese by A. K. Manning. \MR{MR669541 (84a:58004)}

\bibitem{Parry:JLMS:1972}
William Parry: \emph{Cocycles and velocity changes}, J. London Math. Soc. (2)
  \textbf{5} (1972), 511--516. \MR{MR0320276 (47 \#8815)}

\bibitem{Seeley:PAMS:1964}
R.~T. Seeley: \emph{Extension of {$C^{\infty }$} functions defined in a half
  space}, Proc. Amer. Math. Soc. \textbf{15} (1964), 625--626. \MR{MR0165392
  (29 \#2676)}

\bibitem{Siegel:1952}
Carl~Ludwig Siegel: \emph{\"{U}ber die {N}ormalform analytischer
  {D}ifferentialgleichungen in der {N}\"ahe einer {G}leichgewichtsl\"osung},
  Nachr. Akad. Wiss. G\"ottingen. Math.-Phys. Kl. Math.-Phys.-Chem. Abt.
  \textbf{1952} (1952), 21--30. \MR{MR0057407 (15,222b)}

\bibitem{Sternberg:AmJM:1957}
Shlomo Sternberg: \emph{Local contractions and a theorem of {P}oincar\'e},
  Amer. J. Math. \textbf{79} (1957), 809--824. \MR{MR0096853 (20 \#3335)}

\bibitem{Totoki:MFCKUS:1966}
Haruo Totoki: \emph{Time changes of flows}, Mem. Fac. Sci. Kyushu Univ. Ser. A
  \textbf{20} (1966), 27--55. \MR{MR0201606 (34 \#1488)}

\bibitem{Venti:JDE:1966}
Richard~J. Venti: \emph{Linear normal forms of differential equations}, J.
  Differential Equations \textbf{2} (1966), 182--194. \MR{MR0188518 (32
  \#5956)}

\bibitem{Whitney:TAMS:1934}
Hassler Whitney: \emph{Analytic extensions of differentiable functions defined
  in closed sets}, Trans. Amer. Math. Soc. \textbf{36} (1934), no.~1, 63--89.
  \MR{MR1501735}

\end{thebibliography}
\def\cprime{$'$}
\providecommand{\bysame}{\leavevmode\hbox to3em{\hrulefill}\thinspace}
\providecommand{\MR}{\relax\ifhmode\unskip\space\fi MR }
\providecommand{\MRhref}[2]{%
  \href{http://www.ams.org/mathscinet-getitem?mr=#1}{#2}
}
\providecommand{\href}[2]{#2}


\end{document}